\newcommand{\eps}{\varepsilon}
\newcommand{\eto}{\stackrel{\eps\to 0}{\longrightarrow}}
\newcommand{\weto}{\stackrel{\eps\to 0}{\rightharpoonup}}
\newcommand{\R} {\mathbb{R}}
\newcommand{\Z} {\mathbb{Z}}
\def\eto{\buildrel \epsilon\to 0\over\longrightarrow }
\title{$C^{1,\theta}$-Estimates on the distance of Inertial Manifolds\footnote{
This research has been partially supported by grants MTM2016-75465,  MTM2012-31298, ICMAT Severo Ochoa project SEV-2015-0554 (MINECO), Spain and Grupo de Investigaci\'on CADEDIF, UCM.}   }
\author{Jos\'{e} M. Arrieta\footnote{Departamento de Matem\'atica Aplicada, Universidad Complutense de Madrid, 28040 Madrid and Instituto de Ciencias Matem\'aticas
CSIC-UAM-UC3M-UCM, Spain.   e-mail: arrieta@mat.ucm.es}
and Esperanza Santamar\'ia\footnote{Universidad a Distancia de Madrid, 28400 Collado Villalba, Madrid.  email: esperanza.santamaria@udima.es} 
}
\date{ }
\begin{document}

\maketitle
%
%
%
%

{\footnotesize 
\par\noindent {\bf Abstract:}
In this paper we obtain $C^{1,\theta}$-estimates on the distance of inertial manifolds for dynamical systems generated by 
evolutionary parabolic type equations. We consider the situation where the systems are defined in different phase spaces and we estimate the distance in terms of the distance of the resolvent operators of the corresponding elliptic operators and the distance of the nonlinearities of the equations.
 \vskip 0.5\baselineskip

%

\vspace{11pt}

\noindent
{\bf Keywords:}
inertial manifolds, evolution equations, perturbations.
\vspace{6pt}

\noindent
{\bf 2000 Mathematics Subject Classification:}  35B42,  35K90

}

\numberwithin{equation}{section}
\newtheorem{teo}{Theorem}[section]
\newtheorem{lem}[teo]{Lemma}
\newtheorem{cor}[teo]{Corolary}
\newtheorem{prop}[teo]{Proposition}
\newtheorem{defi}[teo]{Definition}
\newtheorem{re}[teo]{Remark}

%

\section{Introduction}
We continue in this work the analysis started in \cite{Arrieta-Santamaria-DCDS} on the estimates on the distance of inertial manifolds.  Actually, in \cite{Arrieta-Santamaria-DCDS} we considered a family of abstract evolution equations of parabolic type, that may be posed in different phase spaces (see equation (\ref{problemaperturbado}) below)  and we impose very general conditions (see (H1) and (H2) below)  guaranteing that each problem has an inertial manifold and more important,  we were able to obtain estimates in the norm of the supremum on the convergence of the inertial manifolds. These estimates are expressed  in terms of the distance of the resolvent operators and in terms of the distance of the nonlinear terms.  These results are the starting point of the present paper and are briefly described in Section \ref{setting} (see Theorem \ref{distaciavariedadesinerciales})

One of the main applications of invariant manifolds is that they allow us to describe the dynamics (locally or globally) of an infinite dimensional system with only a finite number of parameters (the dimension of the manifold). This drastic reduction of dimensionality permits in many instances to analyze in detail the dynamics of the equation and study perturbations problem. But for these questions, some extra differentiability on the manifold and some estimates on the convergence on stronger norms like $C^1$ or $C^{1,\theta}$ is desirable, see \cite{Hale&Raugel3,Arrieta-Santamaria-2}. Actually, the estimates from this paper and from \cite{Arrieta-Santamaria-DCDS} are key estimates to obtain good rates on the convergence of attractors of reaction diffusion equations in thin domains, problem which is addressed in \cite{Arrieta-Santamaria-2}. 

This is actually the main purpose of this work.  Under the very general setting from \cite{Arrieta-Santamaria-DCDS} but impossing some extra differentiability and convergence properties on the nonlinear terms (see hipothesis (H2') below) we obtain that the inertial manifolds are uniformly $C^{1,\theta}$ smooth and obtain estimates on the convergence of the manifolds in this $C^{1,\theta}$ norm.


Let us mention that the theory of invariant and inertial manifolds is a well established theory.  We refer to \cite{Bates-Lu-Zeng1998, Sell&You} for general references on the theory of Inertial manifolds. See also \cite{JamesRobinson} for an accessible introduction to the theory. These inertial manifolds are smooth, see \cite{ChowLuSell}. We also refer to \cite{Henry1, Hale, B&V2,Sell&You,LibroAlexandre,Cholewa}
for general references on dynamics of evolutionary equations.

We describe now the contents of the paper. 

In Section \ref{setting} we introduce the notation, review the main hypotheses (specially (H1) and (H2)) and results from \cite{Arrieta-Santamaria-DCDS}. We describe in detail the new hypothesis (H2') and state the main result  of the paper, Proposition \ref{FixedPoint-E^1Theta} and Theorem \ref{convergence-C^1-theo}.

In Section  \ref{smoothness} we analyze the $C^{1,\theta}$ smoothness of the inertial manifold, proving Proposition \ref{FixedPoint-E^1Theta}. The analysis is based in previous results from \cite{ChowLuSell}.  

In Section \ref{convergence} we obtain the estimates on the distance of the inertial manifold in the $C^{1,\theta}$ norm, proving Theorem \ref{convergence-C^1-theo}. 


\section{Setting of the problem and main results}
\label{setting}

In this section we consider the setting of the problem, following \cite{Arrieta-Santamaria-DCDS}. We refer to this paper for more details about the setting.

%
%
%

Hence, consider the family of problems,
\begin{equation}\label{problemalimite}
(P_0)\left\{
\begin{array}{r l }
u^0_t+A_0u^0&=F_0^\eps(u^0),\\
u^0(0)\in X^\alpha_0,
\end{array}
\right.
\end{equation}
and 
\begin{equation}\label{problemaperturbado}
(P_\varepsilon)\left\{
\begin{array}{r l }
u^\varepsilon_t+A_\varepsilon u^\varepsilon&=F_\varepsilon(u^\varepsilon),\qquad 0<\varepsilon\leq \eps_0\\
u^\varepsilon(0)\in X^\alpha_\varepsilon,
\end{array}
\right.
\end{equation}
where we assume, that $A_\varepsilon$ is self-adjoint positive linear operator on a separable real Hilbert space $X_\varepsilon$,  that is 
$A_\varepsilon: D(A_\varepsilon)=X^1_\varepsilon\subset X_\varepsilon\rightarrow X_\varepsilon,$
and  $F_\eps:X_\eps^\alpha\to X_\eps$, $F_0^\eps:X_0^\alpha\to X_0$ are nonlinearities guaranteeing  global existence of solutions of \eqref{problemaperturbado}, for each $0\leq \eps\leq \eps_0$ and for some $0\leq \alpha<1$.   Observe that for problem \eqref{problemalimite} we even assume that the nonlinearity depends on $\eps$ also. 


%

As in \cite{Arrieta-Santamaria-DCDS}, we assume the existence of linear continuous operators, $E$ and $M$, such that, $E: X_0\rightarrow X_\varepsilon$, $M: X_\varepsilon\rightarrow X_0$ and $E_{\mid_{X^\alpha_0}}: X_0^\alpha\rightarrow X_\varepsilon^\alpha$ and $M_{\mid_{X_\varepsilon^\alpha}}: X_\varepsilon^\alpha\rightarrow X_0^\alpha$, satisfying,
\begin{equation}\label{cotaextensionproyeccion}
\|E\|_{\mathcal{L}(X_0, X_\varepsilon)}, \|M\|_{\mathcal{L}(X_\varepsilon, X_0)}\leq \kappa ,\qquad \|E\|_{\mathcal{L}(X^\alpha_0, X^\alpha_\varepsilon)}, \|M\|_{\mathcal{L}(X^\alpha_\varepsilon, X^\alpha_0)}\leq \kappa.
\end{equation}
for some constant $\kappa\geq 1$.   
We also assume these operators satisfy the following properties,
\begin{equation}\label{propiedadesextensionproyeccion}
M\circ E= I,\qquad \|Eu_0\|_{X_\varepsilon}\rightarrow \|u_0\|_{X_0}\quad\textrm{for}\quad u_0\in X_0.
\end{equation}

%
The family of operators $A_\varepsilon$, for $0\leq \eps\leq \eps_0$,  have compact resolvent. This,  together with the fact that the operators are selfadjoint,  implies that its spectrum is discrete real and consists only of eigenvalues, each one with finite multiplicity. Moreover, the fact that $A_\varepsilon$, $0\leq \varepsilon\leq \eps_0$, is positive implies that its spectrum is positive. So, we denote by $\sigma(A_\varepsilon)$,  the spectrum of the operator $A_\varepsilon$,  with, 
$$\sigma(A_\varepsilon)=\{\lambda_n^\varepsilon\}_{n=1}^\infty,\qquad\textrm{ and}\quad 0<c\leq\lambda_1^\varepsilon\leq\lambda_2^\varepsilon\leq...\leq\lambda_n^\varepsilon\leq...$$
and we also denote by $\{\varphi_i^\varepsilon\}_{i=1}^\infty$ an associated orthonormal family of eigenfunctions. Observe that the requirement of the operators $A_\eps$ being positive can be relaxed to requiring that they are all bounded from below uniformly in the parameter $\epsilon$. We can always consider the modified operators $A_\eps+cI$  with $c$ a large enough constant to make the modified operators positive.  The nonlinear equations \eqref{problemaperturbado} would have to be rewritten accordingly.  

\par\bigskip 

With respect to the relation between both operators, $A_0$ and $A_\eps$  and following \cite{Arrieta-Santamaria-DCDS}, we will assume the following hypothesis

\vspace{0.5cm}
{\sl \paragraph{\textbf{(H1).}}  With $\alpha$ the exponent from problems (\ref{problemaperturbado}), we have
\begin{equation}\label{H1equation}
\|A_\varepsilon^{-1}- EA_0^{-1}M\|_{\mathcal{L}(X_\varepsilon, X_\varepsilon^\alpha)}\to 0\quad \hbox{ as } \eps\to 0.
\end{equation}
}
\par\bigskip 


Let us define $\tau(\eps)$ as an increasing function of $\eps$ such that 
\begin{equation}\label{definition-tau}
\|A_\varepsilon^{-1}E- EA_0^{-1}\|_{\mathcal{L}(X_0, X_\varepsilon^\alpha)}\leq \tau(\eps).
\end{equation}

\par\bigskip

We also recall hypothesis {\bf(H2)} from \cite{Arrieta-Santamaria-DCDS}, regarding the nonlinearities $F_0$ and $F_\eps$,   \par\bigskip
{\sl 
\paragraph{\textbf{(H2).}} We assume that the nonlinear terms  $F_\varepsilon: X^\alpha_\varepsilon\rightarrow X_\varepsilon$  and   $F_0^\varepsilon: X^\alpha_0\rightarrow X_0$ for $0< \eps\leq \eps_0$, satisfy:

\begin{enumerate}
\item[(a)]  They are uniformly bounded, that is, there exists a constant $C_F>0$ independent of $\varepsilon$ such that,
$$\|F_\varepsilon\|_{L^\infty(X_\varepsilon^\alpha, X_\varepsilon)}\leq C_F, \quad \|F_0^\varepsilon\|_{L^\infty(X_0^\alpha, X_0)}\leq C_F$$
\item[(b)] They are globally Lipschitz on $X^\alpha_\varepsilon$ with a uniform Lipstichz constant $L_F$, that is,
\begin{equation}\label{LipschitzFepsilon}
\|F_\varepsilon(u)- F_\varepsilon(v)\|_{X_\varepsilon}\leq L_F\|u-v\|_{X_\varepsilon^\alpha}
\end{equation}
\begin{equation}\label{LipschitzF0}
\|F_0^\varepsilon(u)- F_0^\varepsilon(v)\|_{X_0}\leq L_F\|u-v\|_{X_0^\alpha}. 
\end{equation}

\item[(c)] They have a uniformly bounded support for $0<\varepsilon\leq \eps_0$: there exists $R>0$ such that 
$$Supp F_\varepsilon\subset D_{R}=\{u_\varepsilon\in X_\varepsilon^\alpha: \|u_\varepsilon\|_{X_\varepsilon^\alpha}\leq R\}$$
$$Supp F_0^\varepsilon\subset D_{R}=\{u_0\in X_0^\alpha: \|u_0\|_{X_0^\alpha}\leq R\}.$$

\item[(d)]  $F_\eps$ is near $F_0^\eps$ in the following sense,
\begin{equation}\label{estimacionefes}
\sup_{u_0\in X^\alpha_0}\|F_\varepsilon (Eu_0)-EF_0^\eps (u_0)\|_{X_\varepsilon}=\rho(\varepsilon),
\end{equation}
and $\rho(\varepsilon)\rightarrow 0$ as $\varepsilon\rightarrow 0$.

\end{enumerate} 
}

With {\bf (H1)} and {\bf (H2)} we were able to show in \cite{Arrieta-Santamaria-DCDS} the existence,  convergence and obtain some rate of the convergence in the norm of the supremum of inertial manifolds.  In order to explain the result and to understand the rest of this paper, we need to introduce several notation and results from \cite{Arrieta-Santamaria-DCDS}. We refer to this paper for more explanations.

%
%

Let us consider $m\in\mathbb{N}$ such that $\lambda_m^0<\lambda_{m+1}^0$ and denote by $\mathbf{P}_{\mathbf{m}}^{\bm\varepsilon}$ the canonical orthogonal projection onto the eigenfunctions, $\{\varphi^\varepsilon_i\}_{i=1}^m$, corresponding to the first $m$ eigenvalues of the operator $A_\varepsilon $, $0\leq\varepsilon\leq\varepsilon_0$ and $\mathbf{Q}^{\bm\varepsilon}_{\mathbf{m}}$ the projetion over its orthogonal complement, see \cite{Arrieta-Santamaria-DCDS}. For technical reasons, we express any element belonging to the linear subspace $\mathbf{P}_{\mathbf{m}}^{\bm\varepsilon}(X_\varepsilon)$ as a linear combination of the elements of the following basis
$$\{\mathbf{P}_{\mathbf{m}}^{\bm\varepsilon}(E\varphi^0_1), \mathbf{P}_{\mathbf{m}}^{\bm\varepsilon}(E\varphi^0_2), ...,\mathbf{P}_{\mathbf{m}}^{\bm\varepsilon}(E\varphi^0_m)\},\qquad\textrm{for}\quad 0\leq\varepsilon\leq\varepsilon_0,$$
with $\{\varphi^0_i\}_{i=1}^m$ the eigenfunctions related to the first $m$ eigenvalues of $A_0$, which constitute a basis in $\mathbf{P}_{\mathbf{m}}^{\bm\varepsilon}(X_\varepsilon)$ and 
in $\mathbf{P}_{\mathbf{m}}^{\bm\varepsilon}(X_\varepsilon^\alpha)$, see \cite{Arrieta-Santamaria-DCDS}.  We will denote by $\psi_i^\eps=\mathbf{ P_m^\eps}(E\varphi_i^0)$.

Let us denote by $j_\varepsilon$ the isomorphism from $ \mathbf{P}_{\mathbf{m}}^{\bm\varepsilon}(X_\varepsilon)=[\psi_1^\varepsilon, ..., \psi_m^\varepsilon]$ onto $\mathbb{R}^m$, that gives us the coordinates of each vector. That is,

\begin{equation}\label{definition-jeps}
\begin{array}{rl}
j_\varepsilon:\mathbf{P}_{\mathbf{m}}^{\bm\varepsilon}(X_\varepsilon)&\longrightarrow \mathbb{R}^m, \\
w_\varepsilon&\longmapsto\bar{p},
\end{array}
\end{equation}
where $w_\varepsilon=\sum^m_{i=1} p_i\psi^\varepsilon_i$ and $\bar{p}=(p_1, ..., p_m)$.

We denote by $|\cdot|$ the usual euclidean norm in $\mathbb{R}^m$, that is $|\bar{p}|=\left(\sum_{i=1}^mp_i^2\right)^{\frac{1}{2}}$, 
and by $|\cdot|_{\eps,\alpha}$ the following weighted one,
\begin{equation}\label{normaalpha}
|\bar{p}|_{\eps,\alpha}=\left(\sum_{i=1}^mp_i^2(\lambda_i^\varepsilon)^{2\alpha}\right)^{\frac{1}{2}}.
\end{equation}
\vspace{0.4cm}
We consider the spaces $(\mathbb{R}^m, |\cdot|)$ and $(\mathbb{R}^m, |\cdot|_{\eps,\alpha})$, that is, $\mathbb{R}^m$ with the norm $|\cdot|$ and $|\cdot|_{\eps,\alpha}$, respectively, and notice that for $w_0=\sum^m_{i=1} p_i\psi^0_i$ and $0\leq\alpha<1$ we have that,
\begin{equation}\label{normajepsilon}
\|w_0\|_{X^\alpha_0}=|j_0(w_0)|_{\eps,\alpha}.
\end{equation}
It is also not difficult to see that from the convergence of the eigenvalues (which is obtained from {\bf (H1)}, see \cite{Arrieta-Santamaria-DCDS}), we have that for a fixed $m$ and for all $\delta>0$ small enough there exists $\eps=\eps(\delta)>0$ such that 
\begin{equation}\label{des-normas}
(1-\delta)|\bar p|_{0,\alpha}\leq |\bar p|_{\eps,\alpha}\leq (1+\delta)|\bar p|_{0,\alpha}, \quad 0\leq \eps\leq \eps(\delta), \quad \forall \bar p\in\R^m.
\end{equation}

\bigskip

With respect to the behavior of the linear semigroup in the subspace $\mathbf{Q}^{\bm\varepsilon}_{\mathbf{m}}X_\eps^\alpha$, notice that we have the expression 
$$e^{-A_\eps t}\mathbf{Q}^{\bm\varepsilon}_{\mathbf{m}} u=e^{-A_\varepsilon \mathbf{Q}^{\bm\varepsilon}_{\mathbf{m}} t}u=\sum_{i=m+1}^\infty e^{-\lambda_i^\varepsilon t}(u, \varphi_i^\varepsilon)\varphi_i^\varepsilon.$$
Hence, using the expression of $e^{-A_\eps t \mathbf{Q}^{\bm\varepsilon}_{\mathbf{m}} t}$ from above and following a similar proof as Lemma 3.1 from \cite{Arrieta-Santamaria-DCDS}, we get
$$\|e^{-A_\varepsilon \mathbf{Q}^{\bm\varepsilon}_{\mathbf{m}} t}\|_{\mathcal{L}(X_\varepsilon, X_\varepsilon)}\leq e^{-\lambda_{m+1}^\varepsilon t},$$ 
and,
\begin{equation}\label{semigrupoproyectado}
\|e^{-A_\varepsilon \mathbf{Q}^{\bm\varepsilon}_{\mathbf{m}} t}\|_{\mathcal{L}(X_\varepsilon, X_\varepsilon^\alpha)}\leq e^{-\lambda_{m+1}^\varepsilon t}\left(\max\{\lambda_{m+1}^\varepsilon, \frac{\alpha}{t}\}\right)^\alpha,
\end{equation}
for $t\geq 0.$

In a similar way, we have 
$$e^{-A_\eps t}\mathbf{P}^{\bm\varepsilon}_{\mathbf{m}} u=\sum_{i=1}^m e^{-\lambda_i^\varepsilon t}(u, \varphi_i^\varepsilon)\varphi_i^\varepsilon.$$
and following similar steps as above, for $t\leq 0$ we have,
\begin{equation}\label{semigrupoproyectadoP}
\|e^{-A_\varepsilon \mathbf{P}^{\bm\varepsilon}_{\mathbf{m}} t}\|_{\mathcal{L}(X_\varepsilon, X_\varepsilon)}\leq e^{-\lambda_{m}^\varepsilon t},\,\,\,\,\,\,\,\,\,\,\,\, \|e^{-A_\varepsilon \mathbf{P}^{\bm\varepsilon}_{\mathbf{m}} t}\|_{\mathcal{L}(X^\alpha_\varepsilon, X^\alpha_\varepsilon)}\leq e^{-\lambda_{m}^\varepsilon t},
\end{equation}
\begin{equation}\label{semigrupoproyectadoP-alpha} 
\|e^{-A_\varepsilon \mathbf{P}^{\bm\varepsilon}_{\mathbf{m}} t}\|_{\mathcal{L}(X_\varepsilon, X_\varepsilon^\alpha)}\leq e^{-\lambda_m^\varepsilon t}(\lambda_m^\eps)^\alpha.
\end{equation}

\par\bigskip 
We are looking for inertial manifolds for system \eqref{problemaperturbado} and \eqref{problemalimite} which will be obtained as graphs of appropriate functions.  This motivates the introduction of the sets
$\mathcal{F}_\eps(L,\rho)$ defined as 
$$\mathcal{F}_\eps(L,\rho)=\{ \Phi :\mathbb{R}^m\rightarrow\mathbf{Q}_{\mathbf{m}}^{\bm\varepsilon}(X^\alpha_\varepsilon),\quad\textrm{such that}\quad \textrm{supp } \Phi\subset B_R\quad \textrm{and}\quad$$
$$\quad \|\Phi(\bar{p}^1)-\Phi(\bar{p}^2)\|_{X^\alpha_\varepsilon}\leq L|\bar{p}^1-\bar{p}^2|_{\eps,\alpha} \quad\bar{p}^1,\bar{p}^2\in\mathbb{R}^m \}.$$ 
Then we can show the following result.
\begin{prop} (\cite{Arrieta-Santamaria-DCDS})\label{existenciavariedadinercial}
Let hypotheses {\bf (H1)} and {\bf (H2)} be satisfied. Assume also that $m\geq 1$ is such that,
\begin{equation}\label{CondicionAutovaloresFuerte0}
\lambda_{m+1}^0-\lambda_m^0\geq 3(\kappa+2)L_F\left[(\lambda_m^0)^\alpha+(\lambda_{m+1}^0)^\alpha\right],
\end{equation}
and
\begin{equation}\label{autovalorgrande0}
(\lambda_m^0)^{1-\alpha}\geq 6(\kappa +2)L_F(1-\alpha)^{-1}.
\end{equation}
Then, there exist $L<1$ and $\varepsilon_0>0$ such that for all $0<\varepsilon\leq\varepsilon_0$ there exist  inertial manifolds $\mathcal{M}_\varepsilon$ and $\mathcal{M}_0^\varepsilon$  for    (\ref{problemaperturbado}) and (\ref{problemalimite}) respectively,  given by the ``graph'' of a function $\Phi_\varepsilon\in\mathcal{F}_\eps(L,\rho)$ and $\Phi_0^\varepsilon\in\mathcal{F}_0(L,\rho)$. 
\end{prop}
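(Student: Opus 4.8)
The plan is to construct each inertial manifold as the graph of a fixed point of the Lyapunov--Perron operator acting on the complete metric space $\mathcal{F}_\eps(L,\rho)$, and to verify that the spectral gap conditions \eqref{CondicionAutovaloresFuerte0} and \eqref{autovalorgrande0} are exactly what make that operator a contraction with a Lipschitz constant $L<1$. First I would fix $m$ satisfying the two hypotheses and work with the splitting $X_\eps^\alpha=\mathbf{P}_{\mathbf{m}}^{\bm\eps}(X_\eps^\alpha)\oplus\mathbf{Q}_{\mathbf{m}}^{\bm\eps}(X_\eps^\alpha)$. For a given $\Phi\in\mathcal{F}_\eps(L,\rho)$ and an initial $\bar p\in\R^m$, I would consider the backward-bounded trajectory whose $\mathbf{P}$-component is prescribed at time $0$ and whose $\mathbf{Q}$-component is forced to decay as $t\to-\infty$; writing the mild-solution variation-of-constants formula and solving it on $(-\infty,0]$ defines the Lyapunov--Perron map $\mathcal{T}_\eps\Phi$. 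The decay estimates \eqref{semigrupoproyectado}, \eqref{semigrupoproyectadoP} and \eqref{semigrupoproyectadoP-alpha} for the linear semigroup on the two subspaces are precisely the ingredients needed to show these improper integrals converge.

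Next I would show $\mathcal{T}_\eps$ maps $\mathcal{F}_\eps(L,\rho)$ into itself. The support condition \textbf{(H2)}(c) together with the support restriction built into $\mathcal{F}_\eps(L,\rho)$ guarantees $\mathrm{supp}\,(\mathcal{T}_\eps\Phi)\subset B_R$, and the uniform Lipschitz bound \textbf{(H2)}(b) combined with the semigroup estimates yields the Lipschitz bound $\|\mathcal{T}_\eps\Phi(\bar p^1)-\mathcal{T}_\eps\Phi(\bar p^2)\|_{X_\eps^\alpha}\leq L|\bar p^1-\bar p^2|_{\eps,\alpha}$. The key computation here is to integrate the two semigroup bounds against the forcing: the $\mathbf{Q}$-part contributes an integral of $e^{-\lambda_{m+1}^\eps t}(\max\{\lambda_{m+1}^\eps,\alpha/t\})^\alpha$ and the $\mathbf{P}$-part an integral of $e^{-\lambda_m^\eps t}(\lambda_m^\eps)^\alpha$, each multiplied by $L_F$. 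Using the Gamma-function bound $\int_0^\infty t^{-\alpha}e^{-\lambda t}\,dt=\Gamma(1-\alpha)\lambda^{\alpha-1}$ and $(1-\alpha)^{-1}\geq \Gamma(1-\alpha)/\Gamma(2-\alpha)$, condition \eqref{autovalorgrande0} controls the diagonal self-interaction terms and condition \eqref{CondicionAutovaloresFuerte0}, via the gap $\lambda_{m+1}^0-\lambda_m^0$ dominating a multiple of $L_F[(\lambda_m^0)^\alpha+(\lambda_{m+1}^0)^\alpha]$, forces the resulting Lipschitz constant strictly below $1$. I would carry the estimates out at $\eps=0$ first and then transfer to small $\eps$ using the eigenvalue convergence inequality \eqref{des-normas} and the resolvent closeness \textbf{(H1)}, which costs only an arbitrarily small $\delta$ and hence preserves the strict inequality for $\eps\leq\eps_0$.

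Then I would establish that $\mathcal{T}_\eps$ is a contraction in the sup-distance on $\mathcal{F}_\eps(L,\rho)$: for two functions $\Phi,\Psi$, the difference $\mathcal{T}_\eps\Phi-\mathcal{T}_\eps\Psi$ is again estimated by the same semigroup integrals against $L_F\|\Phi-\Psi\|_\infty$, and the same gap arithmetic gives a contraction factor $<1$. The Banach fixed point theorem then produces the unique fixed points $\Phi_\eps$ and $\Phi_0^\eps$, whose graphs are the claimed invariant (inertial) manifolds $\mathcal{M}_\eps$ and $\mathcal{M}_0^\eps$; invariance follows from the variation-of-constants characterization, and exponential attraction (the inertial property) follows from the spectral gap in the standard way. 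Since problem \eqref{problemalimite} sits in the fixed space $X_0$, the construction at $\eps$ specializes verbatim to give $\Phi_0^\eps\in\mathcal{F}_0(L,\rho)$. The main obstacle I anticipate is not the fixed-point abstraction but the bookkeeping in the two subspace integrals: one must verify that the \emph{same} $L<1$ works simultaneously for the self-mapping Lipschitz bound and for the contraction bound, uniformly in $\eps$, and that the transfer from $\eps=0$ to $\eps>0$ does not degrade the strict inequalities — this is where \eqref{des-normas} and the uniform constants $\kappa, L_F, C_F$ must be tracked carefully so that all the $\eps$-dependence is absorbed into a harmless $(1+\delta)$ factor. Much of this is established in \cite{Arrieta-Santamaria-DCDS}, so I would invoke those estimates and focus the argument on confirming that the hypotheses \eqref{CondicionAutovaloresFuerte0}--\eqref{autovalorgrande0} close the contraction.
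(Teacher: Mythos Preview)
The paper does not actually prove this proposition: it is quoted verbatim from \cite{Arrieta-Santamaria-DCDS} (note the citation in the proposition header), and no proof is given here. Your Lyapunov--Perron outline is correct and is precisely the method used in that reference; indeed, the present paper later recalls the same Lyapunov--Perron operators $\mathbf{T}_{\bm\eps}$, $\mathbf{T}^{\bm\eps}_{\mathbf{0}}$ (equations \eqref{definition-T0Psi2}--\eqref{definition-TepsPsi2}) and the backward-trajectory equations \eqref{equationp*}--\eqref{equationp} that your proposal describes, so your approach coincides with the framework the authors rely on.
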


\begin{re} 
%
We have written quotations in the word ``graph'' since the manifolds  $\mathcal{M}_\varepsilon$, $\mathcal{M}_0^\varepsilon$ are not properly speaking the graph of the functions $\Phi_\varepsilon$, $\Phi_0^\varepsilon$ but rather the graph of the appropriate function obtained via the isomorphism $j_\eps$ which identifies $\mathbf{P}_{\mathbf{m}}^{\bm\varepsilon}(X_\varepsilon^\alpha)$ with $\R^m$. 
That is, $\mathcal{M}_\eps=\{ j_\eps^{-1}(\bar p)+\Phi_\varepsilon(\bar p); \quad \bar p\in \R^m\}$ and 
$\mathcal{M}_0^\eps=\{ j_0^{-1}(\bar p)+\Phi_0^\varepsilon(\bar p); \quad \bar p\in \R^m\}$
\end{re}

The main result from \cite{Arrieta-Santamaria-DCDS} was the following:

\begin{teo} (\cite{Arrieta-Santamaria-DCDS})
\label{distaciavariedadesinerciales}
Let hypotheses {\bf (H1)} and {\bf (H2)} be satisfied and let $\tau(\eps)$ be defined by  \eqref{definition-tau}. 
Then, under the hypothesis of Proposition \ref{existenciavariedadinercial}, if $\Phi_\varepsilon$ are the maps  that give us the inertial manifolds for $0<\eps\leq \eps_0$ then we have,
\begin{equation}\label{distance-inertialmanifolds}
\|\Phi_\varepsilon-E\Phi_0^\eps\|_{L^\infty(\mathbb{R}^m, X^\alpha_\varepsilon)}\leq C[\tau(\varepsilon)|\log(\tau(\varepsilon))|+\rho(\varepsilon)],
\end{equation}
with $C$ a constant independent of $\varepsilon$.
\end{teo}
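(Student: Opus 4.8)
The plan is to use the fact that each inertial manifold function is the unique fixed point of the associated Lyapunov--Perron operator and to compare the two fixed points directly. Recall that $\Phi_\eps\in\mathcal{F}_\eps(L,\rho)$ is characterized as follows: for a slow coordinate $\bar p\in\R^m$, the point $j_\eps^{-1}(\bar p)+\Phi_\eps(\bar p)$ lies on $\mathcal{M}_\eps$ precisely when it is the value at time $0$ of the unique globally backward-bounded solution $u_\eps(\cdot)$ of \eqref{problemaperturbado} with prescribed slow data, and then the fast component is recovered by the variation-of-constants integral
\[
\Phi_\eps(\bar p)=\int_0^\infty e^{-A_\eps \mathbf{Q}^{\bm\varepsilon}_{\mathbf{m}} t}\,\mathbf{Q}^{\bm\varepsilon}_{\mathbf{m}} F_\eps\big(u_\eps(-t)\big)\,dt,
\]
with the analogous representation for $\Phi_0^\eps$ in terms of $A_0$, $F_0^\eps$ and $\mathbf{Q}^{0}_{\mathbf{m}}$. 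Denote by $\mathcal{T}_\eps$ and $\mathcal{T}_0^\eps$ the resulting operators, so that $\Phi_\eps=\mathcal{T}_\eps\Phi_\eps$ and $\Phi_0^\eps=\mathcal{T}_0^\eps\Phi_0^\eps$. The smoothing bounds \eqref{semigrupoproyectado}, \eqref{semigrupoproyectadoP} together with the spectral-gap hypotheses \eqref{CondicionAutovaloresFuerte0}--\eqref{autovalorgrande0} are exactly what make $\mathcal{T}_\eps$ a contraction on the space of candidate graphs, with Lipschitz constant $\theta<1$ that can be taken uniform in $\eps$; this is the content of the existence statement, Proposition \ref{existenciavariedadinercial}.

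Applying $E$ to $\Phi_0^\eps=\mathcal{T}_0^\eps\Phi_0^\eps$ and subtracting, I would write for each $\bar p$
\[
\Phi_\eps-E\Phi_0^\eps=\big(\mathcal{T}_\eps\Phi_\eps-\mathcal{T}_\eps(E\Phi_0^\eps)\big)+\big(\mathcal{T}_\eps(E\Phi_0^\eps)-E\,\mathcal{T}_0^\eps\Phi_0^\eps\big).
\]
The first bracket is controlled by the uniform Lipschitz dependence of $\mathcal{T}_\eps$ on its argument, $\|\mathcal{T}_\eps\Phi_\eps-\mathcal{T}_\eps(E\Phi_0^\eps)\|_{L^\infty(\R^m,X_\eps^\alpha)}\le\theta\,\|\Phi_\eps-E\Phi_0^\eps\|_{L^\infty(\R^m,X_\eps^\alpha)}$, where the norm equivalence \eqref{des-normas} is used to put the weights $|\cdot|_{\eps,\alpha}$ and $|\cdot|_{0,\alpha}$ on a common footing when both operators are evaluated at the same slow coordinate. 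The whole estimate thus reduces to bounding the consistency term $\|\mathcal{T}_\eps(E\Phi_0^\eps)-E\,\mathcal{T}_0^\eps\Phi_0^\eps\|_{L^\infty}$: once that is done, the contraction term is absorbed on the left and one concludes $(1-\theta)\|\Phi_\eps-E\Phi_0^\eps\|_{L^\infty}\le\|\mathcal{T}_\eps(E\Phi_0^\eps)-E\,\mathcal{T}_0^\eps\Phi_0^\eps\|_{L^\infty}$.

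The consistency term is estimated by inserting the two integral representations and comparing factor by factor. Three discrepancies appear: (i) the difference of nonlinearities $F_\eps(E\,\cdot)-E F_0^\eps(\cdot)$, controlled directly by $\rho(\eps)$ via \eqref{estimacionefes}; (ii) the difference of the projected semigroups $e^{-A_\eps \mathbf{Q}^{\bm\varepsilon}_{\mathbf{m}} t}-E\,e^{-A_0 \mathbf{Q}^{0}_{\mathbf{m}} t}\,M$ (and of the projections themselves), controlled by $\tau(\eps)$ after writing the semigroup difference through the resolvent difference and invoking \eqref{H1equation}, \eqref{definition-tau}; and (iii) the Lipschitz dependence on the backward trajectories, which feeds back into the contraction and is again absorbed. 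The delicate point is the time integral in (ii). The individual smoothing factor in \eqref{semigrupoproyectado} behaves like $t^{-\alpha}$ as $t\to0^+$, so the crude bound on the semigroup difference in the $X_\eps^\alpha$-norm is only of order $t^{-\alpha}$ near $t=0$ and cannot carry the small factor $\tau(\eps)$ there, whereas the resolvent-based ``fine'' bound does carry $\tau(\eps)$ but degrades like $t^{-1}$. I would therefore split $\int_0^\infty$ at a threshold $t_\ast$: on $[0,t_\ast]$ use the crude bound (integrating $t^{-\alpha}$), on $[t_\ast,\infty)$ use the $\tau(\eps)$-small fine bound against the exponential decay $e^{-\lambda_{m+1}^\eps t}$. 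Balancing the two pieces at $t_\ast\sim\tau(\eps)^{1/(1-\alpha)}$ and integrating the logarithmically divergent $\tau(\eps)\,t^{-1}$ over $[t_\ast,O(1)]$ produces precisely the factor $|\log\tau(\eps)|$, giving a bound of order $\tau(\eps)\,|\log\tau(\eps)|$.

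Combining (i)--(iii) yields $\|\mathcal{T}_\eps(E\Phi_0^\eps)-E\,\mathcal{T}_0^\eps\Phi_0^\eps\|_{L^\infty}\le C[\tau(\eps)|\log\tau(\eps)|+\rho(\eps)]$, and dividing by $1-\theta$ gives \eqref{distance-inertialmanifolds} with $C$ independent of $\eps$. I expect the main obstacle to be step (ii): converting the resolvent-level closeness \eqref{H1equation}--\eqref{definition-tau} into a usable, time-uniform bound on the difference of the two parabolic semigroups in the $X_\eps^\alpha$-norm, and then carrying out the split-integral optimization that isolates the sharp $\tau|\log\tau|$ rate rather than a worse power. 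A secondary but persistent difficulty is the bookkeeping forced by the fact that the slow subspaces $\mathbf{P}^{\bm\varepsilon}_{\mathbf{m}}(X_\eps^\alpha)$ and $\mathbf{P}^{0}_{\mathbf{m}}(X_0^\alpha)$ are genuinely different spaces, which requires repeated use of $M\circ E=I$, of the uniform operator bounds \eqref{cotaextensionproyeccion}, and of the norm equivalence \eqref{des-normas} to keep every comparison in the single target norm of $X_\eps^\alpha$.
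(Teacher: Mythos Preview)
The present paper does not prove this theorem: it is quoted as the main result of \cite{Arrieta-Santamaria-DCDS} and only the statement is recalled (see the sentence ``The main result from \cite{Arrieta-Santamaria-DCDS} was the following'' preceding it, and the remark after it). So there is no proof here to compare your proposal against.

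That said, your outline is the correct one and matches the strategy of the cited reference, as can be inferred from the auxiliary lemmas of \cite{Arrieta-Santamaria-DCDS} that the present paper invokes later (Lemmas 3.10, 5.1, 5.3--5.6). The fixed-point splitting $\Phi_\eps-E\Phi_0^\eps=(\mathcal{T}_\eps\Phi_\eps-\mathcal{T}_\eps(E\Phi_0^\eps))+(\mathcal{T}_\eps(E\Phi_0^\eps)-E\,\mathcal{T}_0^\eps\Phi_0^\eps)$ followed by contraction absorption is exactly the backbone, and the $\tau|\log\tau|$ rate does come from the mechanism you identify: the $\mathbf{Q}_{\mathbf{m}}$-projected semigroup difference satisfies a bound of order $\tau(\eps)\,t^{-1}e^{-\lambda t}$ in $\mathcal{L}(X_\eps,X_\eps^\alpha)$ (this is Lemma~5.3 of \cite{Arrieta-Santamaria-DCDS}, used in Section~\ref{convergence} of this paper), and integrating it over $[t_\ast,\infty)$ with a cutoff $t_\ast$ comparable to $\tau(\eps)$ produces the logarithm.

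One small technical caveat worth flagging: $E\Phi_0^\eps$ takes values in $X_\eps^\alpha$ but not a priori in $\mathbf{Q}_{\mathbf{m}}^{\bm\eps}(X_\eps^\alpha)$, so $\mathcal{T}_\eps$ is not literally defined on it as written. In practice one either composes with $\mathbf{Q}_{\mathbf{m}}^{\bm\eps}$ first (the commutator $\mathbf{Q}_{\mathbf{m}}^{\bm\eps}E-E\mathbf{Q}_{\mathbf{m}}^{\mathbf{0}}$ is $O(\tau(\eps))$ and gets absorbed) or, as the cited paper does, compares the two integral representations term by term without routing the argument through $\mathcal{T}_\eps$ applied to a foreign graph. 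This is precisely the ``bookkeeping'' you anticipate at the end, and Lemmas~5.1--5.6 of \cite{Arrieta-Santamaria-DCDS} are the tools that handle the closeness of projections, of the slow trajectories $p_\eps$ versus $Ep_0^\eps$, and of the coordinate maps $j_\eps$ versus $j_0$.
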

\par\bigskip

\begin{re}
Properly speaking,  in \cite{Arrieta-Santamaria-DCDS} the above theorem is proved only for the case for which the nonlinearity $F_0^\eps$ from \eqref{problemalimite} satisfies $F_0^\eps\equiv F_0$ for all $0<\eps<\eps_0$. But revising the proof of \cite{Arrieta-Santamaria-DCDS} we can see that exactly the same argument is valid for the most general case where the nonlinearity depends on $\eps$. 

\end{re}

To obtain stronger convergence results on the inertial manifolds, we will need to requiere stronger conditions on the nonlinearites.  These conditions are stated in the following hypothesis, 
\par\bigskip
{\sl 
\paragraph{\textbf{(H2').}}
 We assume that the nonlinear terms $F_\varepsilon$ and $F_0^\eps$, satisfy hipothesis {\bf(H2)} and they are uniformly $C^{1,\theta_F}$ functions from $ X_\varepsilon^\alpha$ to $X_\varepsilon$, and $X_0^\alpha$ to $X_0$ respectively, for some $0<\theta_F\leq 1$. That is, $F_\eps\in C^1(X_\varepsilon^\alpha, X_\varepsilon)$, $F_0^\eps\in C^1(X_0^\alpha, X_0)$ and there exists  $L>0$, independent of $\eps$, such that 
$$\|DF_\varepsilon(u)-DF_\varepsilon(u')\|_{\mathcal{L}(X_\varepsilon^\alpha, X_\varepsilon)}\leq L\|u-u'\|^{\theta_F}_{X_\varepsilon^\alpha},\qquad \forall u, u'\in X_\varepsilon^\alpha.$$
$$\|DF_0^\varepsilon(u)-DF_0^\varepsilon(u')\|_{\mathcal{L}(X_0^\alpha, X_0)}\leq L\|u-u'\|^{\theta_F}_{X_0^\alpha},\qquad \forall u, u'\in X_0^\alpha.$$

}

\par\bigskip 
We can state now the main results of this section.
\begin{prop}\label{FixedPoint-E^1Theta}
 Assume hypotheses {\bf(H1)} and {\bf(H2')} are satisfied and that the gap conditions \eqref{CondicionAutovaloresFuerte0}, \eqref{autovalorgrande0} hold. Then, for any $\theta>0$ such that $\theta\leq\theta_F$ and $\theta<\theta_0$, where
 \begin{equation}\label{theta}
\theta_0= \frac{\lambda_{m+1}^0-\lambda_m^0-4L_F(\lambda_m^0)^\alpha-2L_F(\lambda_{m+1}^0)^\alpha}{2L_F(\lambda_m^0)^\alpha+\lambda_m^0}
\end{equation}
then, the functions $\Phi_\eps$, and $\Phi_0^\eps$ for  $0< \eps\leq\eps_0$, obtained above,  which give the inertial manifolds, are $C^{1, \theta}(\mathbb{R}^m, X_\eps^\alpha)$ and $C^{1, \theta}(\mathbb{R}^m, X_0^\alpha)$.  Moreover, the $C^{1,\theta}$ norm is bounded uniformly in $\eps$, for $\eps>0$ small. 
\end{prop}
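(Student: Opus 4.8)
\textbf{Proof strategy for Proposition \ref{FixedPoint-E^1Theta}.}

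The plan is to obtain the inertial manifold as a fixed point of the standard Lyapunov--Perron (graph transform) integral operator and then to bootstrap its regularity from Lipschitz to $C^{1,\theta}$. Since the map $\Phi_\eps$ is already known to lie in $\mathcal{F}_\eps(L,\rho)$ from Proposition \ref{existenciavariedadinercial}, I would first set up the fixed-point scheme on a complete metric space of functions whose graph transform encodes the invariance condition. Concretely, one writes the invariance of the manifold through the integral equation for the trajectory lying on the graph: the $\mathbf{Q}_{\mathbf{m}}^{\bm\varepsilon}$-component is expressed via the Duhamel/variation-of-constants formula using $e^{-A_\eps \mathbf{Q}_{\mathbf{m}}^{\bm\varepsilon} t}$ integrated over $(-\infty,0]$ and $e^{-A_\eps \mathbf{P}_{\mathbf{m}}^{\bm\varepsilon} t}$ over $[0,\infty)$, with the decay estimates \eqref{semigrupoproyectado}, \eqref{semigrupoproyectadoP} and \eqref{semigrupoproyectadoP-alpha} controlling the kernels. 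The gap conditions \eqref{CondicionAutovaloresFuerte0} and \eqref{autovalorgrande0} guarantee that this operator is a contraction, reproducing the fixed point $\Phi_\eps$.

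Next I would upgrade to $C^1$ by differentiating the fixed-point equation formally with respect to the base variable $\bar p\in\R^m$. Using (H2'), which provides $F_\eps\in C^1$ with a H\"older-continuous derivative of exponent $\theta_F$, I would introduce a second fixed-point problem for the candidate derivative $D\Phi_\eps$, posed in the space of bounded, Lipschitz functions from $\R^m$ into $\mathcal{L}(\R^m, \mathbf{Q}_{\mathbf{m}}^{\bm\varepsilon}(X_\eps^\alpha))$. The contraction constant for this linearized operator involves the Lipschitz constant $L_F$ of the nonlinearity together with the semigroup bounds; the denominator in the definition \eqref{theta} of $\theta_0$ is exactly the combination $2L_F(\lambda_m^0)^\alpha + \lambda_m^0$ that controls the growth of the $\mathbf{P}$-flow when one differentiates along the base direction, while the numerator is the effective spectral gap after subtracting the Lipschitz contributions. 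A standard fiber-contraction / uniform-convergence argument (as in \cite{ChowLuSell}) then shows that the fixed point is genuinely $C^1$ and that $D\Phi_\eps$ is the fixed point of the linearized operator. All constants here depend only on $m$, $L_F$, $\kappa$, $\alpha$ and the eigenvalues $\lambda_m^0,\lambda_{m+1}^0$, and by \eqref{des-normas} the $\eps$-dependent eigenvalues are within $(1\pm\delta)$ of the limit ones, so the bounds are uniform in $\eps$ for $\eps$ small.

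The final step is the H\"older estimate on $D\Phi_\eps$. Here one estimates the difference $D\Phi_\eps(\bar p^1)-D\Phi_\eps(\bar p^2)$ by subtracting the two linearized fixed-point equations and splitting the integrand into a term controlled by the H\"older continuity of $DF_\eps$ (contributing the exponent $\theta_F$) and a term controlled by the Lipschitz continuity of $\Phi_\eps$ itself composed through the semigroup kernels. Balancing the exponential decay against the polynomial growth $t^{-\alpha}$ and the gap, one finds that the integral converges and yields a bound of the form $|\bar p^1 - \bar p^2|^{\theta}$ precisely when $\theta \le \theta_F$ and $\theta < \theta_0$; the strict inequality $\theta<\theta_0$ is what makes the relevant geometric-type integral finite. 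I expect the main obstacle to be this last balancing: one must carefully track how differentiation in the base variable $\bar p$ produces the extra factor $(\lambda_m^\eps)^\alpha$-type growth from $e^{-A_\eps \mathbf{P}_{\mathbf{m}}^{\bm\varepsilon} t}$ on the backward-time part of the integral, and verify that the resulting exponent condition is exactly \eqref{theta}, uniformly in $\eps$ via \eqref{des-normas}. The corresponding statement for $\Phi_0^\eps$ follows verbatim with $\eps=0$ in the kernels.
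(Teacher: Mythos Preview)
Your proposal is correct and follows essentially the same route as the paper: set up the Lyapunov--Perron operator, obtain $C^1$ via the fiber-contraction scheme of \cite{ChowLuSell} (this is the paper's Lemma \ref{contracion} applied to the coupled map $\mathbf{\Pi}_{\bm\eps}=(\mathbf{T}_{\bm\eps},\mathbf{D}_{\bm\eps})$), and then close the H\"older estimate by splitting the integrand exactly as you describe. The only cosmetic difference is in how the H\"older bound is finalized: the paper packages your ``subtract the two linearized equations'' computation as the statement that $\mathbf{D}_{\bm\eps}(\Psi_\eps,\cdot)$ maps the closed set $\mathcal{E}_\eps^{\theta,M}$ into itself (Lemma \ref{PsiUniform}), and then concludes by iterating from $(\Psi_\eps,0)$ and using closedness, whereas your phrasing suggests deriving an implicit inequality $[D\Psi_\eps]_\theta\le C+\eta\,[D\Psi_\eps]_\theta$ with $\eta<1$ directly at the fixed point; these are the same computation and yield the same threshold $\theta_0$.
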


The main result we want to show in this article is the following:

\par\bigskip
\begin{teo}\label{convergence-C^1-theo}
Let hypotheses {\bf (H1)},  {\bf (H2')} and gap conditions \eqref{CondicionAutovaloresFuerte0}, 
\eqref{autovalorgrande0} be satisfied, so that Proposition \ref{FixedPoint-E^1Theta} hold, and we have inertial manifolds 
$\mathcal{M}^\eps$, $\mathcal{M}_0^\eps$ given as the graphs of the functions $\Phi_\eps$, $\Phi_0^\eps$ for $0<\eps\leq\eps_0$.   
If we denote by
\begin{equation}\label{convergenceDF}
\beta(\eps)=\sup_{u\in \mathcal{M}^\eps_0}\|DF_\varepsilon \big(Eu \big)E-EDF_0^\eps\big(u\big)\|_{\mathcal{L}(X_0^\alpha, X_\varepsilon)},
\end{equation}
%
%
%
%
%
%
%
 then, there exists $\theta^*$ with $0<\theta^*<\theta_F$ such that for all $0<\theta<\theta^*$, we obtain the following estimate
\begin{equation}\label{distance-C^1-inertialmanifolds}
\|\Phi_\varepsilon-E\Phi_0^\eps\|_{C^{1, \theta}(\mathbb{R}^m, X_\varepsilon^\alpha)}\leq \mathbf{C} \left(\left[\beta(\varepsilon)+\Big(\tau(\varepsilon)|\log(\tau(\varepsilon))|+\rho(\varepsilon)\Big)^{\theta^*}\right]\right)^{1-\frac{\theta}{\theta^*}},
 \end{equation}
where $\tau(\eps)$, $\rho(\varepsilon)$ are given by (\ref{definition-tau}), (\ref{estimacionefes}), respectively and $\mathbf{C}$ is a constant independent of $\eps$.
\end{teo}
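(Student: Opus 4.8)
The plan is to upgrade the already available $C^0$-estimate of Theorem~\ref{distaciavariedadesinerciales} to the derivatives and then interpolate against the uniform $C^{1,\theta^*}$-bound furnished by Proposition~\ref{FixedPoint-E^1Theta}. Write $g_\eps=D\Phi_\eps-E\,D\Phi_0^\eps$, regarded as a map $\mathbb{R}^m\to\mathcal{L}\big((\mathbb{R}^m,|\cdot|_{\eps,\alpha}),X_\eps^\alpha\big)$, and recall the elementary convexity inequality for H\"{o}lder seminorms,
$$[g_\eps]_{C^\theta}\le 2\,[g_\eps]_{C^{\theta^*}}^{\theta/\theta^*}\,\|g_\eps\|_{L^\infty}^{1-\theta/\theta^*},\qquad 0<\theta<\theta^*,$$
obtained by writing each difference quotient $|g_\eps(\bar p^1)-g_\eps(\bar p^2)|/|\bar p^1-\bar p^2|^\theta$ as the product of the $(\theta/\theta^*)$-power of the corresponding $C^{\theta^*}$-quotient and the complementary power of $|g_\eps(\bar p^1)-g_\eps(\bar p^2)|$. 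Thus, once $\theta^*$ is chosen below the smoothness threshold so that $[g_\eps]_{C^{\theta^*}}$ is bounded uniformly in $\eps$, the entire $C^{1,\theta}$-distance is controlled as soon as we produce a genuine $C^1$-convergence estimate, i.e. a good $L^\infty$-bound for $g_\eps$.

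Accordingly, I would first fix $\theta^*$ with $0<\theta^*<\min\{\theta_F,\theta_0\}$, with $\theta_0$ as in \eqref{theta}, so that Proposition~\ref{FixedPoint-E^1Theta} applies with exponent $\theta^*$ and the norms $\|\Phi_\eps\|_{C^{1,\theta^*}}$, $\|\Phi_0^\eps\|_{C^{1,\theta^*}}$ are uniformly bounded; in particular $[g_\eps]_{C^{\theta^*}}\le \mathbf{C}$.

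The core of the proof is the $L^\infty$-estimate of $g_\eps$. Here I would differentiate the Lyapunov--Perron fixed-point equations that characterize $\Phi_\eps$ and $\Phi_0^\eps$ (equivalently, use the linear fixed-point equations for $D\Phi_\eps$, $D\Phi_0^\eps$ set up in the proof of Proposition~\ref{FixedPoint-E^1Theta}), apply the extension operator $E$ to the limit equation, and subtract. In the resulting integral identity for $g_\eps$ three kinds of error terms arise: (i) mismatches between the projected linear semigroups $e^{-A_\eps\mathbf{Q}^{\bm\eps}_{\mathbf{m}}t}$, $e^{-A_\eps\mathbf{P}^{\bm\eps}_{\mathbf{m}}t}$ and their $E$-transported limit analogues, which by \eqref{definition-tau} together with the exponential bounds \eqref{semigrupoproyectado}--\eqref{semigrupoproyectadoP-alpha} are controlled by $\tau(\eps)$ (with the usual logarithmic loss from integrating over $(-\infty,0]$, hence by $\tau(\eps)|\log\tau(\eps)|$); (ii) the direct discrepancy of the linearized nonlinearities $DF_\eps(Eu)E-E\,DF_0^\eps(u)$ evaluated along the limit manifold, which is precisely $\beta(\eps)$ by \eqref{convergenceDF}; and (iii) terms in which $DF_\eps$ is evaluated at the point of $\mathcal{M}^\eps$ versus the $E$-image of the corresponding point of $\mathcal{M}_0^\eps$. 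For (iii) I would invoke the uniform $\theta_F$-H\"{o}lder continuity of $DF_\eps$ from {\bf (H2')} and bound the distance of the two arguments by the $C^0$-estimate of Theorem~\ref{distaciavariedadesinerciales}, obtaining a contribution $\le C\big(\tau(\eps)|\log\tau(\eps)|+\rho(\eps)\big)^{\theta_F}$; since $\theta^*\le\theta_F$ and the quantity is $\le 1$ for $\eps$ small, this is in turn $\le C\big(\tau(\eps)|\log\tau(\eps)|+\rho(\eps)\big)^{\theta^*}$. As in Theorem~\ref{distaciavariedadesinerciales} and Proposition~\ref{FixedPoint-E^1Theta}, the self-referential occurrence of $g_\eps$ inside the integral carries the same contraction factor coming from the gap conditions \eqref{CondicionAutovaloresFuerte0}, \eqref{autovalorgrande0}, so it can be absorbed to the left, yielding
$$\|g_\eps\|_{L^\infty}\le \mathbf{C}\Big(\beta(\eps)+\big(\tau(\eps)|\log\tau(\eps)|+\rho(\eps)\big)^{\theta^*}\Big)=:\mathbf{C}\,\eta(\eps).$$

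Finally I would assemble the $C^{1,\theta}$-norm of $\Phi_\eps-E\Phi_0^\eps$. Its $L^\infty$-part is $\le C(\tau|\log\tau|+\rho)\le C\,\eta(\eps)$ by Theorem~\ref{distaciavariedadesinerciales}, its $C^1$-part is the bound just obtained, and the H\"{o}lder seminorm of the derivative is, by the interpolation inequality with $[g_\eps]_{C^{\theta^*}}\le\mathbf{C}$ and $\|g_\eps\|_{L^\infty}\le\mathbf{C}\,\eta(\eps)$, bounded by $\mathbf{C}\,\eta(\eps)^{1-\theta/\theta^*}$. Since $\eta(\eps)\le 1$ and $1-\theta/\theta^*<1$ for $\eps$ small, this H\"{o}lder term dominates the other two, which gives exactly \eqref{distance-C^1-inertialmanifolds}. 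I expect the delicate point to be step (iii): verifying that, after introducing the $E$-transport and the H\"{o}lder-continuity error in the \emph{derivative} fixed-point map, the map remains a contraction with the same gap-dependent constant, and that the exponent $\theta_F$ may be uniformly replaced by $\theta^*$. This is precisely what forces $\theta^*<\theta_F$ and, through the interpolation, produces the exponent $1-\theta/\theta^*$ in the final estimate.
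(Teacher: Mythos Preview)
Your proposal is correct and follows essentially the same route as the paper: obtain a $C^1$-estimate on $g_\eps=D\Phi_\eps-E\,D\Phi_0^\eps$ by subtracting the differentiated Lyapunov--Perron equations, then interpolate against the uniform $C^{1,\theta^*}$-bound from Proposition~\ref{FixedPoint-E^1Theta} via exactly the convexity inequality you wrote. Two small points where the paper is more explicit: first, the integrand of the derivative fixed-point equation contains not only $g_\eps$ but also the difference of the linearized flows $E\Theta_0^\eps-\Theta_\eps\mathbf{P}^{\bm\eps}_{\mathbf{m}}E$ (the $D_\xi p$-factor), and the paper isolates this as a separate Gronwall lemma (Lemma~\ref{Jdistance}) whose output again contains a $\|g_\eps\|_\infty$ term with small coefficient---so the absorption step is two-layered rather than one; second, to make the time-integrals on $(-\infty,0]$ converge after inserting the H\"older error from (iii), the paper needs $\theta^*$ below a further threshold $\theta_1$ (see \eqref{theta1}), not only $\min\{\theta_F,\theta_0\}$. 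Neither point changes your argument structurally.
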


\bigskip
 \begin{re}
 As a matter of fact, $\theta^*$ can be chosen $\theta^*<\min\{\theta_F, \theta_0, \theta_1\}$ where $\theta_F$ is from {\bf(H2')}, $\theta_0$ is defined in\eqref{theta} and $\theta_1$,
 $$\theta_1= \frac{\lambda_{m+1}^0-\lambda_m^0-4L_F(\lambda_m^0)^\alpha}{(\kappa+2)L_F(\lambda_m^0)^\alpha+\lambda_m^0+3}, $$
 see \eqref{theta1}.
 \end{re}
As usual, we denote by $C^{1, \theta}(\mathbb{R}^m, X_\varepsilon^\alpha)$  the  space of $C^1(\mathbb{R}^m, X_\varepsilon^\alpha)$ maps whose differentials are uniformly H\"{o}lder continuous with H\"{o}lder exponent $\theta$. That is, there is a constant $C$ independent of $\eps$ such that,
$$\|D\Phi_\eps(z)-D\Phi_\eps(z')\|_{\mathcal{L}(\mathbb{R}^m, X_\eps^\alpha)}\leq C|z-z'|_{\eps,\alpha}^{\theta}.$$
where the norm $|\cdot |_{\eps,\alpha}$ is given by (\ref{normaalpha}). Notice that the norm $|\cdot|_{\eps, \alpha}$ is equivalent to $|\cdot|$ uniformly in $\eps$ and $\alpha$.

The space $C^{1, \theta}(\mathbb{R}^m, X_\varepsilon^\alpha)$ is endowed with the norm $\|\cdot\|_{C^{1, \theta}(\mathbb{R}^m, X_\varepsilon^\alpha)}$ given by,
$$\|\Phi_\eps\|_{C^{1, \theta}(\mathbb{R}^m, X_\varepsilon^\alpha)}=\|\Phi_\eps\|_{C^1(\mathbb{R}^m, X_\varepsilon^\alpha)}+\sup_{z, z'\in\mathbb{R}^m}\frac{\|D\Phi_\eps(z)-D\Phi_\eps(z')\|_{\mathcal{L}(\mathbb{R}^m, X_\eps^\alpha)}}{|z-z'|_{\eps,\alpha}^{\theta}}$$

\bigskip 

To simplify notation below and unless some clarification is needed,  we will denote the norms $\|\cdot\|_{C^{1}(\mathbb{R}^m, X_\varepsilon^\alpha)}$ and  $\|\cdot\|_{C^{1, \theta}(\mathbb{R}^m, X_\varepsilon^\alpha)}$  by  $\|\cdot\|_{C^{1}}$ and  $\|\cdot\|_{C^{1, \theta}}$.  Also, very often we will need to consider the following space of bounded linear operators  $\mathcal{L}(\mathbf{P}_{\mathbf{m}}^{\bm\varepsilon}X^\alpha_\varepsilon, \mathbf{Q}_{\mathbf{m}}^{\bm\varepsilon}X_\varepsilon^\alpha)$ and its norm will be abbreviated by $\|\cdot \|_{\mathcal{L}}$. 

\bigskip

%

%
%
%
%
%

\section{Smoothness of inertial manifolds}\label{smoothness-subsection}
\label{smoothness}
In this section we show the $C^{1,\theta}$ smoothness of the inertial manifolds $\Phi_\eps$ and $\Phi_0^\eps$ for a fixed value of the parameter $\eps$. Moreover,  we will obtain estimates of its $C^{1,\theta}$ norm which are independent of the parameter $\eps$.

\par\medskip Recall that the $C^1$ smoothness of the manifold is shown in \cite{Sell&You}, where they proved the following result:
 
\begin{teo}\label{smoothness}
Let hypotheses of Proposition \ref{existenciavariedadinercial} be satisfied. Assume that for each $\varepsilon>0$ the nonlinear functions $F_\varepsilon$, $F_0^\eps$ are Lipschitz $C^1$ functions from  $X_\varepsilon^\alpha$ to $X_\varepsilon$ and from $X_0^\alpha$ to $X_0$ respectively. Then, the inertial manifolds $\mathcal{M}_\varepsilon$, $\mathcal{M}_0^\varepsilon$ for $\varepsilon>0$, are $C^1$-manifolds and the functions  $\Psi_\varepsilon$, $\Psi_0^\varepsilon$ are Lipschitz $C^1$ functions from $\mathbf{P}_{\mathbf{m}}^{\bm\varepsilon}X_\varepsilon^\alpha$ to $\mathbf{Q}_{\mathbf{m}}^{\bm\varepsilon}X_\varepsilon^\alpha$ and from $\mathbf{P}_{\mathbf{m}}^{\bm 0}X_0^\alpha$ to $\mathbf{Q}_{\mathbf{m}}^{\bm 0}X_0^\alpha$.
\end{teo}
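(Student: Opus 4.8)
The plan is to follow the Lyapunov--Perron approach used in \cite{Sell&You}: realize the manifold function as a fixed point of an integral operator, obtain a candidate for its derivative as the fixed point of the formally linearized equation, and identify the two by a fiber contraction argument. I describe the construction for $\Phi_\eps$; the argument for $\Phi_0^\eps$ is identical, replacing $F_\eps$, $A_\eps$, $\mathbf{P}^{\bm\varepsilon}_{\mathbf{m}}$, $\mathbf{Q}^{\bm\varepsilon}_{\mathbf{m}}$ by their $\eps=0$ counterparts. Throughout I use the smoothing bounds \eqref{semigrupoproyectado}, \eqref{semigrupoproyectadoP}, \eqref{semigrupoproyectadoP-alpha} and the gap conditions \eqref{CondicionAutovaloresFuerte0}, \eqref{autovalorgrande0} that already underlie Proposition \ref{existenciavariedadinercial}.

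First I recall the variational characterization of the manifold. For each $\bar p\in\mathbf{P}^{\bm\varepsilon}_{\mathbf{m}}X_\eps^\alpha$, let $\mathcal{Y}$ be the Banach space of continuous curves $u:(-\infty,0]\to X_\eps^\alpha$ with $\sup_{t\le 0}e^{\gamma t}\|u(t)\|_{X_\eps^\alpha}<\infty$, where $\gamma$ is fixed in the spectral gap $(\lambda_m^\eps,\lambda_{m+1}^\eps)$. On $\mathcal{Y}$ consider the Lyapunov--Perron operator
\[(\mathcal{T}_{\bar p}u)(t)=e^{-A_\eps \mathbf{P}^{\bm\varepsilon}_{\mathbf{m}} t}\bar p+\int_0^t e^{-A_\eps \mathbf{P}^{\bm\varepsilon}_{\mathbf{m}}(t-s)}\mathbf{P}^{\bm\varepsilon}_{\mathbf{m}}F_\eps(u(s))\,ds+\int_{-\infty}^t e^{-A_\eps \mathbf{Q}^{\bm\varepsilon}_{\mathbf{m}}(t-s)}\mathbf{Q}^{\bm\varepsilon}_{\mathbf{m}}F_\eps(u(s))\,ds.\]
Using the estimates above together with the Lipschitz bound $L_F$ and the gap conditions, $\mathcal{T}_{\bar p}$ is a uniform contraction on $\mathcal{Y}$; its unique fixed point $u^*(\cdot\,;\bar p)$ satisfies $\mathbf{P}^{\bm\varepsilon}_{\mathbf{m}}u^*(0)=\bar p$, and the manifold function is recovered as $\Psi_\eps(\bar p)=\mathbf{Q}^{\bm\varepsilon}_{\mathbf{m}}u^*(0\,;\bar p)$.

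Next I produce the candidate derivative by differentiating the fixed-point equation formally in $\bar p$. Writing $V(t)\in\mathcal{L}(\mathbf{P}^{\bm\varepsilon}_{\mathbf{m}}X_\eps^\alpha,X_\eps^\alpha)$ for the putative $D_{\bar p}u^*(t)$, the linearized equation reads
\[V(t)=e^{-A_\eps \mathbf{P}^{\bm\varepsilon}_{\mathbf{m}} t}+\int_0^t e^{-A_\eps \mathbf{P}^{\bm\varepsilon}_{\mathbf{m}}(t-s)}\mathbf{P}^{\bm\varepsilon}_{\mathbf{m}}DF_\eps(u^*(s))V(s)\,ds+\int_{-\infty}^t e^{-A_\eps \mathbf{Q}^{\bm\varepsilon}_{\mathbf{m}}(t-s)}\mathbf{Q}^{\bm\varepsilon}_{\mathbf{m}}DF_\eps(u^*(s))V(s)\,ds.\]
Since $\|DF_\eps\|\le L_F$, the right-hand side is, by the very same estimates, a uniform contraction on bounded operator-valued curves, so it has a unique fixed point $V^*$, whose norm is bounded independently of $\bar p$ by the contraction estimate. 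This last bound is what yields the uniform Lipschitz $C^1$ conclusion $\|D\Psi_\eps(\bar p)\|_{\mathcal{L}}\le L$.

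The decisive step is to show that $V^*$ is genuinely the derivative, so that $D\Psi_\eps(\bar p)=\mathbf{Q}^{\bm\varepsilon}_{\mathbf{m}}V^*(0)$. Here I would invoke the fiber contraction theorem applied to the product map $(u,V)\mapsto(\mathcal{T}_{\bar p}u,\,\mathcal{S}_{\bar p}(u,V))$, where $\mathcal{S}_{\bar p}$ denotes the right-hand side of the linearized equation: the base map is a contraction, the fiber map $V\mapsto\mathcal{S}_{\bar p}(u,V)$ is a uniform contraction, and $\mathcal{S}_{\bar p}$ depends continuously on $u$, so the product map has a globally attracting fixed point $(u^*,V^*)$. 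Starting the iteration from a smooth pair $(u_0,D_{\bar p}u_0)$, the chain rule applied under the integral sign shows that one step preserves the relation $V_n=D_{\bar p}u_n$, and uniform convergence of both $u_n$ and $V_n$ then transfers this relation to the limit, giving $V^*=D_{\bar p}u^*$ with continuous dependence on $\bar p$. I expect this identification to be the main obstacle: one must control the linearized iterates uniformly in $\bar p$ and over the unbounded interval $(-\infty,0]$ so that differentiation commutes with the fixed-point limit, which is exactly where the weighted norms make the integrals converge and the gap conditions \eqref{CondicionAutovaloresFuerte0}, \eqref{autovalorgrande0} keep both contractions uniform. Finally, since $j_\eps$ is a linear isomorphism, the $C^1$ regularity and the uniform derivative bound pass to $\Phi_\eps=\Psi_\eps\circ j_\eps^{-1}$, and the identical argument for $F_0^\eps$ gives the statement for $\Psi_0^\eps$.
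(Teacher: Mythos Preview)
Your outline is correct and rests on the same fiber contraction principle (Lemma \ref{contracion}) that the paper invokes, but you parametrize the fixed-point problem differently. The paper, following \cite{Sell&You} and \cite{ChowLuSell}, works in the \emph{graph} formulation: the base space is $\mathcal{\tilde F}_\eps(L,R)$ (Lipschitz maps $\upchi_\eps:\mathbf{P}_{\mathbf{m}}^{\bm\eps}X_\eps^\alpha\to\mathbf{Q}_{\mathbf{m}}^{\bm\eps}X_\eps^\alpha$), the fiber space is $\mathcal{E}_\eps$ (bounded continuous maps $\Upsilon_\eps:\mathbf{P}_{\mathbf{m}}^{\bm\eps}X_\eps^\alpha\to\mathcal{L}(\mathbf{P}_{\mathbf{m}}^{\bm\eps}X_\eps^\alpha,\mathbf{Q}_{\mathbf{m}}^{\bm\eps}X_\eps^\alpha)$), and the product map is $\mathbf{\Pi}_{\bm\eps}(\upchi_\eps,\Upsilon_\eps)=(\mathbf{T}_{\bm\eps}\upchi_\eps,\mathbf{D}_{\bm\eps}(\upchi_\eps,\Upsilon_\eps))$ with $\mathbf{T}_{\bm\eps}$, $\mathbf{D}_{\bm\eps}$ given by \eqref{definition-TepsPsi2}, \eqref{differential-Psi}. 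You instead work in the \emph{trajectory} formulation, applying the fiber contraction for each fixed $\bar p$ to pairs (curve on $(-\infty,0]$, operator-valued curve).

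Both routes are standard and lead to the same conclusion. The graph formulation has one practical advantage that explains the paper's choice: because the base and fiber spaces already carry sup norms over all $\bar p$, the convergence furnished by Lemma \ref{contracion} is automatically uniform in $\bar p$, so the identification $\Upsilon_\eps^*=D\Psi_\eps$ follows directly from the classical fact that a uniform limit of $C^1$ functions whose derivatives converge uniformly is $C^1$. In your trajectory formulation this uniformity in $\bar p$ is not part of the fiber contraction output and must be supplied separately---precisely the obstacle you flag. More importantly for what follows in the paper, the graph-based objects $\mathbf{D}_{\bm\eps}$, $\Theta_\eps$, and $\mathcal{E}_\eps^{\theta,M}$ are the ones used throughout Sections \ref{smoothness} and \ref{convergence} to obtain the $C^{1,\theta}$ regularity and the convergence estimates, so that formulation integrates more cleanly with the rest of the argument.
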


\begin{re}\label{relation-Phi-Psi}
i) Let us mention that the relation between the maps $\Psi_\varepsilon: \mathbf{P}_{\mathbf{m}}^{\bm\varepsilon}X_\varepsilon^\alpha\to \mathbf{Q}_{\mathbf{m}}^{\bm\varepsilon}X_\varepsilon^\alpha$ (resp.  $\Psi_0^\varepsilon: \mathbf{P}_{\mathbf{m}}^{\bm 0}X_0^\alpha\to \mathbf{Q}_{\mathbf{m}}^{\bm 0}X_0^\alpha$)  and $\Phi_\eps: \R^m\to  \mathbf{Q}_{\mathbf{m}}^{\bm\varepsilon}X_\varepsilon^\alpha$ (resp. $\Phi_0^\eps: \R^m\to  \mathbf{Q}_{\mathbf{m}}^{\bm 0}X_0^\alpha$) is $\Phi_\eps=\Psi_\eps\circ j_\eps^{-1}$ (resp. $\Phi_0^\eps=\Psi_0^\eps\circ j_0^{-1}$), where $j_\eps$ is defined by \eqref{definition-jeps}. 

\par\noindent ii) For the rest of the exposition, whenever we write  $\Psi_\eps$, $\Psi_0^\eps$, $\Phi_\eps$ and $\Phi_0^\eps$ we will refer to these maps that define the inertial manifolds. 

\end{re}

The proof of this theorem is based in 
the following extension of the Contraction Mapping Theorem, see \cite{ChowLuSell}.
\begin{lem}\label{contracion}
Let $X$ and $Y$ be complete metric spaces with metrics $d_x$ and $d_y$. Let $H: X\times Y\rightarrow X\times Y$ be a continuous function satisfying the following:
\begin{itemize}
\item[(1)] $H(x, y)=(F(x),G(x, y))$, $F$ does not depend on $y$.
\item[(2)] There is a constant $\theta$ with $0\leq\theta <1$ such that one has
$$d_x(F(x_1), F(x_2))\leq\theta d_x(x_1, x_2),\qquad x_1, x_2\in X,$$
$$d_y(G(x, y_1), G(x, y_2))\leq \theta d_y(y_1, y_2),\qquad x\in X, y_1, y_2\in Y.$$

\end{itemize}
Then there is a unique fixed point $(x^*, y^*)$ of $H$. Moreover, if $(x_n, y_n)$ is any sequence of iterations, 
$$(x_{n+1}, y_{n+1})=H(x_n, y_n)\qquad\textrm{for}\quad n\geq 1,$$
then
$$\lim_{n\rightarrow\infty}(x_n, y_n)=(x^*, y^*).$$

\end{lem}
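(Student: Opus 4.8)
The plan is to exploit the skew-product (triangular) structure $H(x,y)=(F(x),G(x,y))$, which decouples the problem into the base variable $x$ and the fiber variable $y$, and to reduce existence and uniqueness to two separate applications of the ordinary Banach contraction principle, leaving only a single genuine convergence argument in the fiber.

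First I would treat the base variable. Since $F:X\to X$ is a $\theta$-contraction on the complete metric space $X$ and, by (1), does not depend on $y$, the classical Banach fixed point theorem produces a unique $x^*\in X$ with $F(x^*)=x^*$ and guarantees $F^n(x_0)\to x^*$ for every $x_0$. In particular, for any iteration sequence the first components obey $x_{n+1}=F(x_n)$, so $x_n=F^n(x_0)\to x^*$. Next I would fix the fiber over $x^*$: by the second inequality in (2), $G(x^*,\cdot):Y\to Y$ is a $\theta$-contraction on the complete space $Y$, hence has a unique fixed point $y^*$. Then $H(x^*,y^*)=(F(x^*),G(x^*,y^*))=(x^*,y^*)$, so $(x^*,y^*)$ is a fixed point. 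For uniqueness, if $(\tilde x,\tilde y)$ is any fixed point of $H$, then $\tilde x=F(\tilde x)$ forces $\tilde x=x^*$ by the first step, and then $\tilde y=G(x^*,\tilde y)$ forces $\tilde y=y^*$ by uniqueness in the fiber.

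The heart of the argument is the convergence of an arbitrary iteration sequence in the fiber. Setting $a_n=d_y(y_n,y^*)$, I would use the triangle inequality together with condition (2) applied at $x=x_n$ (which is legitimate because the fiber contraction estimate is uniform in $x$) to obtain
$$a_{n+1}=d_y\big(G(x_n,y_n),G(x^*,y^*)\big)\leq d_y\big(G(x_n,y_n),G(x_n,y^*)\big)+d_y\big(G(x_n,y^*),G(x^*,y^*)\big)\leq \theta a_n+b_n,$$
where $b_n=d_y(G(x_n,y^*),G(x^*,y^*))$. Since $H$ is continuous, $G$ is continuous, and because $x_n\to x^*$ we get $b_n\to 0$. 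It then remains to show that the inhomogeneous recursion $a_{n+1}\le\theta a_n+b_n$ with $0\le\theta<1$ and $b_n\to 0$ forces $a_n\to 0$: iterating yields $a_{N+k}\le\theta^k a_N+\sum_{j=0}^{k-1}\theta^{k-1-j}b_{N+j}$, and choosing $N$ so that $b_n<\eps(1-\theta)$ for all $n\ge N$ bounds the sum by $\eps$ while the leading term $\theta^k a_N\to 0$. Hence $\limsup_n a_n\le\eps$ for every $\eps>0$, so $a_n\to 0$, i.e. $y_n\to y^*$; combined with $x_n\to x^*$ this gives $(x_n,y_n)\to(x^*,y^*)$.

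The only delicate point is this last step; all existence and uniqueness assertions are direct invocations of the Banach principle on $X$ and on the fiber over $x^*$. The obstacle is therefore purely the asymptotic control of the inhomogeneous recursion driven by the vanishing error $b_n$, and it is resolved by the standard $\eps$–$N$ splitting above rather than by any further fixed-point argument.
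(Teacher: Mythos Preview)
Your proof is correct and is precisely the standard ``fiber contraction'' argument. The paper itself does not give a proof of this lemma: it merely states it and refers to \cite{ChowLuSell} (and \cite{Sell&You}) for the result, so there is nothing substantive to compare against beyond noting that your argument is the one found in those references.
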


In \cite{ChowLuSell} and \cite{Sell&You} the authors use this lemma to show the existence of an appropriate fixed point which will give the desired differentiability. In our case, we consider the maps 
$\mathbf{\Pi}_{\mathbf{0}}^\eps:\tilde{\mathcal{F}}_0(L,R)\times \mathcal{E}_0\rightarrow \mathcal{\tilde F}_0(L,R)\times \mathcal{E}_0$ and 
$\mathbf{\Pi}_{\bm\eps}:\mathcal{\tilde F}_\eps(L,R)\times \mathcal{E}_\eps\rightarrow \mathcal{\tilde F}_\eps(L,R)\times \mathcal{E}_\eps$ 
given by
 $$\mathbf{\Pi}_{\mathbf{0}}^\eps: (\upchi^\eps_0, \Upsilon^\eps_0)\rightarrow (\mathbf{T}^\eps_{\mathbf{0}} \upchi^\eps_0,\mathbf{D}_0^\eps(\upchi^\eps_0, \Upsilon^\eps_0)),$$
 and
 $$\mathbf{\Pi}_{\bm\eps}: (\upchi_\varepsilon, \Upsilon_\varepsilon)\rightarrow (\mathbf{T}_{\bm\varepsilon} \upchi_\varepsilon, \mathbf{D}_\eps(\upchi_\varepsilon, \Upsilon_\varepsilon)),$$
 where 
  $$\mathcal{\tilde F}_\eps(L,R)\mathord=\Big\{\upchi_\varepsilon:\mathbf{P}_{\mathbf{m}}^{\bm\varepsilon}X^\alpha_\varepsilon\rightarrow\mathbf{Q}_{\mathbf{m}}^{\bm\varepsilon}X^\alpha_\varepsilon \,\,/\,\, \|\upchi_\varepsilon(p)-\upchi_\varepsilon(p')\|_{X^\alpha_\varepsilon}\leq L\|p-p'\|_{X_\eps^\alpha}, \,\,\, p, p'\in \mathbf{P}_{\mathbf{m}}^{\bm\varepsilon}X_\eps^\alpha,$$
 $$\hbox{supp}(\upchi_\eps)\subset \{ \phi\in \mathbf{P}_{\mathbf{m}}^{\bm\eps} X_\eps^\alpha, \|\phi\|_{X_\eps^\alpha}\leq R\}\Big\}, \qquad 0\leq \eps\leq\eps_0$$
 and 
 $$\mathcal{E}_\eps=\{\Upsilon_\eps:\mathbf{P}_{\mathbf{m}}^{\bm\varepsilon}X^\alpha_\varepsilon\rightarrow\mathcal{L}(\mathbf{P}_{\mathbf{m}}^{\bm\varepsilon}X^\alpha_\varepsilon, \mathbf{Q}_{\mathbf{m}}^{\bm\varepsilon}X_\varepsilon^\alpha)\hbox{ continuous}:\,\,\,\, \qquad\qquad$$
 $$\qquad\qquad\qquad\|\Upsilon_\eps(p)p'\|_{X_\varepsilon^\alpha}\leq  \|p'\|_{X_\eps^\alpha},\quad p, p'\in\mathbf{P}_{\mathbf{m}}^{\bm\varepsilon}X^\alpha_\varepsilon \}\qquad 0\leq \eps\leq\eps_0.$$
Notice that the last contiditon in the definition of $\mathcal{E}_\eps$ could be written equivalently as $\|\Upsilon_\eps(p)\|_{\mathcal{L}}\leq 1$
for all  $p\in \mathbf{P}_{\mathbf{m}}^{\bm\varepsilon}X^\alpha_\varepsilon$. 
\bigskip

The functionals $\mathbf{T}^{\bm\eps}_{\mathbf{0}}$, $\mathbf{T}_{\bm\varepsilon}$ are the ones used in the Lyapunov-Perron method to prove the existence of the inertial manifolds, see \cite{Sell&You}, which are defined as

\begin{equation}\label{definition-T0Psi2}
 (\mathbf{T}^{\bm\eps}_{\mathbf{0}}\upchi^\eps_0)(\xi)=\int_{-\infty}^0e^{A_\varepsilon\mathbf{Q}^{\mathbf{0}}_{\mathbf{m}} s}\mathbf{Q}^{\mathbf{0}}_{\mathbf{m}} F^\eps_0(u^\eps_0(s))ds,
 \end{equation}
\begin{equation}\label{definition-TepsPsi2}
 (\mathbf{T}_{\bm \varepsilon}\upchi_\varepsilon)(\eta)=\int_{-\infty}^0e^{A_\varepsilon\mathbf{Q}^{\bm\varepsilon}_{\mathbf{m}} s}\mathbf{Q}^{\bm\eps}_{\mathbf{m}} F_\eps(u_\eps(s))ds,
 \end{equation}
 with $u^\eps_0(t)=p^\eps_0(t)+\upchi^\eps_0(p^\eps_0(t))$, $u_\varepsilon(t)=p_\varepsilon(t)+\upchi_\varepsilon(p_\varepsilon(t))$, where $p_0^\eps(\cdot)\in [\varphi_1^0,\ldots,\varphi_m^0]$  is the globally defined solution of 
\begin{equation}\label{equationp*}
\left\{
\begin{array}{l}
p_t=-A_0 p+\mathbf{P}_{\mathbf{m}}^{\mathbf{0}} F^\eps_0(p+\upchi^\eps_0(p(t)))\\
p(0)=\xi\in [\varphi_1^0,\ldots,\varphi_m^0]
\end{array}
\right.
\end{equation}
and  $p_\eps(\cdot)\in [\varphi_1^\eps,\ldots,\varphi_m^\eps]$ is the globally defined solution of 
\begin{equation}\label{equationp}
\left\{
\begin{array}{l}
p_t=-A_\varepsilon p+\mathbf{P}_{\mathbf{m}}^{\bm\eps} F_\eps(p+\upchi_\eps(p(t)))\\
p(0)=\eta\in [\varphi_1^\eps,\ldots,\varphi_m^\eps]. 
\end{array}
\right.
\end{equation}


The functionals, ${\mathbf{D}^{\bm\eps}_{\mathbf{0}}}(\upchi^\eps_0, \Upsilon^\eps_0)$, $\mathbf{D}_{\bm\varepsilon}(\upchi_\varepsilon, \Upsilon_\varepsilon)$ are given as follows: for any $\xi\in \mathbf{P}_{\mathbf{m}}^{\mathbf{0}}X^\alpha_0$, $\eta\in \mathbf{P}_{\mathbf{m}}^{\bm\varepsilon}X^\alpha_\varepsilon$,

\begin{equation}\label{differential-Psi*} 
{\mathbf{D}^{\bm\eps}_{\mathbf{0}}}(\upchi^\eps_0, \Upsilon^\eps_0)(\xi)=\int_{-\infty}^0 e^{A_0 \mathbf{Q}_{\mathbf{m}}^{\mathbf{0}} s} \mathbf{Q}_{\mathbf{m}}^{\mathbf{0}} DF^\eps_0(u^\eps_0(s))(I+\Upsilon^\eps_0(p^\eps_0(s)))\Theta^\eps_0(\xi,s)ds,
\end{equation}

and
\begin{equation}\label{differential-Psi} 
\mathbf{D}_{\bm\varepsilon}(\upchi_\varepsilon, \Upsilon_\varepsilon)(\eta)=\int_{-\infty}^0 e^{A_\varepsilon \mathbf{Q}_{\mathbf{m}}^{\bm\varepsilon} s} \mathbf{Q}_{\mathbf{m}}^{\bm\varepsilon} DF_\varepsilon(u_\varepsilon(s))(I+\Upsilon_\varepsilon(p_\varepsilon(s)))\Theta_\varepsilon(\eta,s)ds,
\end{equation}
with $u^\eps_0$, $p^\eps_0$, $u_\eps$, $p_\eps$ as above and moreover,  $\Theta^\eps_0(\xi, t)=\Theta^\eps_0(\upchi^\eps_0, \Upsilon^\eps_0, \xi, t)$, $\Theta_\varepsilon(\eta, t)=\Theta_\varepsilon(\upchi_\eps, \Upsilon_\eps, \eta, t)$ are the linear maps from $\mathbf{P}_{\mathbf{m}}^{\mathbf{0}}X_0^\alpha$ to $\mathbf{P}_{\mathbf{m}}^{\mathbf{0}}X_0^\alpha$ and from $\mathbf{P}_{\mathbf{m}}^{\bm\varepsilon}X_\varepsilon^\alpha$ to $\mathbf{P}_{\mathbf{m}}^{\bm\varepsilon}X_\varepsilon^\alpha$ satisfying 
\begin{equation}\label{equationTheta*-section6}
\left\{
\begin{array}{l}
{\Theta}_t= -A_0 \Theta+\mathbf{P}_{\mathbf{m}}^{\mathbf{0}}  DF^\eps_0(u^\eps_0(t))(I+\Upsilon^\eps_0(p^\eps_0(t)))\Theta\\
\Theta(\xi,0)=I,
\end{array}
\right.
\end{equation}
and
\begin{equation}\label{equationTheta-section6}
\left\{
\begin{array}{l}
\Theta_t= -A_\varepsilon \Theta+\mathbf{P}_{\mathbf{m}}^{\bm\varepsilon}  DF_\varepsilon(u_\varepsilon(t))(I+\Upsilon_\varepsilon(p_\varepsilon(t)))\Theta\\
\Theta(\eta,0)=I,
\end{array}
\right.
\end{equation}
respectively. 

In fact, in these works it is obtained that the fixed point of the maps $\mathbf{\Pi}_{\mathbf{0}}^\eps$ and $\mathbf{\Pi}_\eps$ are given by  $({\upchi^\eps_0}^*, {\Upsilon^\eps_0}^*)=(\Psi^{\eps}_0, D\Psi^{\eps}_0)$, $(\upchi_\varepsilon^*, \Upsilon_\varepsilon^*)=(\Psi_\varepsilon, D\Psi_\varepsilon)$ with $\Psi^\eps_0$ and $\Psi_\varepsilon$ are the maps whose graphs gives us the inertial manifolds (see Remark \ref{relation-Phi-Psi} ii)), which are given by the fixed points of the functionals $\mathbf{T}^{\bm\eps}_{\mathbf{0}}$ and $\mathbf{T}_{\bm\varepsilon}$ and $D\Psi_0^\varepsilon$, $D\Psi_\varepsilon$ are the Frechet derivatives of the inertial manifolds.

\bigskip 


\par\medskip In order to prove the $C^{1,\theta}$ smoothness of the inertial manifolds $\Phi_0^\eps$, $\Phi_\eps$, we will show that if we denote the set
$$\mathcal{E}_{\eps}^{ \theta,M}=\{\Upsilon_\eps\in \mathcal{E}_{\eps} : \|\Upsilon_\eps(p)-\Upsilon_\eps(p')\|_{\mathcal{L}}
\leq M\|p-p'\|_{X_\eps^\alpha}^\theta,\quad \forall p, p'\in\mathbf{P}_{\mathbf{m}}^{\bm\eps}X_\eps^\alpha\}$$
which is a closed set in $\mathcal{E}_{\eps}$, then there exist appropriate $\theta$ and $M$ such that the maps ${\mathbf{D}^{\bm\eps}_{\mathbf{0}}}(\Psi^\eps_0, \cdot)$ and $\mathbf{D}_{\bm\varepsilon}(\Psi_\varepsilon, \cdot)$ from \eqref{differential-Psi*} and \eqref{differential-Psi} with $\Psi^\eps_0$, $\Psi_\varepsilon$ the obtained inertial manifolds, transform $\mathcal{E}_{\eps}^{ \theta,M}$ into itself, see Lemma \ref{PsiUniform} below, which will imply that the fixed point of the maps $\mathbf{\Pi}_{\mathbf{0}}^\eps$ and $\mathbf{\Pi}_{\bm\eps}$ lie in $\mathcal{\tilde F}_0(L,R)\times \mathcal{E}_0^{ \theta,M}$ and $\mathcal{\tilde F}_\eps(L,R)\times \mathcal{E}_\eps^{ \theta,M}$, respectively,   obtaining the desired regularity.

%
%

\bigskip

Throughout this subsection, we provide a proof of Proposition \ref{FixedPoint-E^1Theta} for the inertial manifold $\Phi_\eps$ for each $\eps\geq 0$. 
Note that the proof of this result for the inertial manifold $\Phi^\eps_0$, consists in following, step by step, the same proof. Then, we focus now in the inertial manifold $\Phi_\eps$ with $\eps>0$ fixed.  
\bigskip

We start with some estimates. 
\begin{lem}\label{distp-section6}
Let $p^1_\varepsilon(t)$ and $p^2_\eps(t)$ be solutions of (\ref{equationp}) with $p^1_\eps(0)$ and $p^2_\eps(0)$ its initial data, respectively. Then,  for $t\leq 0$,
$$\|p^1_\eps(t)-p^2_\eps(t)\|_{X_\eps^\alpha}\leq \|p_\eps^1(0)-p_\eps^2(0)\|_{X_\eps^\alpha} e^{-[2L_F(\lambda_m^\eps)^\alpha+\lambda_m^\eps]t}$$
\end{lem}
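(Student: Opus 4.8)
The plan is to reduce everything to a scalar Gronwall inequality for $g(t):=\|p^1_\eps(t)-p^2_\eps(t)\|_{X_\eps^\alpha}$, exploiting that both solutions of \eqref{equationp} stay in the finite-dimensional subspace $\mathbf{P}_{\mathbf{m}}^{\bm\eps}X_\eps^\alpha=[\varphi_1^\eps,\ldots,\varphi_m^\eps]$, on which the backward semigroup bounds \eqref{semigrupoproyectadoP} and \eqref{semigrupoproyectadoP-alpha} are available. Writing $w=p^1_\eps-p^2_\eps$ and $u^i=p^i_\eps+\upchi_\eps(p^i_\eps)$, subtracting the two copies of \eqref{equationp} gives $w_t=-A_\eps\mathbf{P}_{\mathbf{m}}^{\bm\eps}w+\mathbf{P}_{\mathbf{m}}^{\bm\eps}\big[F_\eps(u^1)-F_\eps(u^2)\big]$. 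The variation of constants formula based at $t=0$ reads
$$w(t)=e^{-A_\eps\mathbf{P}_{\mathbf{m}}^{\bm\eps}t}w(0)+\int_0^t e^{-A_\eps\mathbf{P}_{\mathbf{m}}^{\bm\eps}(t-s)}\mathbf{P}_{\mathbf{m}}^{\bm\eps}\big[F_\eps(u^1(s))-F_\eps(u^2(s))\big]\,ds,$$
and I would read it for $t\le 0$, noting that then $t\le 0$ and, for $s$ in the range $t\le s\le 0$, $t-s\le 0$, so that the relevant exponents are nonpositive and \eqref{semigrupoproyectadoP}, \eqref{semigrupoproyectadoP-alpha} apply.

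Next I would bound the nonlinear term. Since $\mathbf{P}_{\mathbf{m}}^{\bm\eps}$ is an orthogonal projection, $\|\mathbf{P}_{\mathbf{m}}^{\bm\eps}[F_\eps(u^1)-F_\eps(u^2)]\|_{X_\eps}\le\|F_\eps(u^1)-F_\eps(u^2)\|_{X_\eps}$, and the Lipschitz bound \eqref{LipschitzFepsilon} together with the Lipschitz property of $\upchi_\eps\in\tilde{\mathcal{F}}_\eps(L,R)$ (constant $L<1$) shows that the map $p\mapsto F_\eps(p+\upchi_\eps(p))$ is Lipschitz with constant $(1+L)L_F\le 2L_F$; hence this term is bounded by $2L_F\,g(s)$. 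Combining with \eqref{semigrupoproyectadoP} for the first summand and \eqref{semigrupoproyectadoP-alpha} for the kernel yields, for $t\le 0$,
$$g(t)\le e^{-\lambda_m^\eps t}g(0)+\int_t^0 e^{-\lambda_m^\eps(t-s)}(\lambda_m^\eps)^\alpha\,2L_F\,g(s)\,ds.$$

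Finally I would close the estimate with Gronwall. Multiplying by $e^{\lambda_m^\eps t}$ and setting $h(s)=e^{\lambda_m^\eps s}g(s)$ absorbs the linear factor and gives $h(t)\le g(0)+2L_F(\lambda_m^\eps)^\alpha\int_t^0 h(s)\,ds$ for $t\le 0$. This is Gronwall's inequality in the backward direction; solving it (for instance by the substitution $r=-t$, which turns it into the standard forward form, or by differentiating the auxiliary function $s\mapsto\int_s^0 h$) produces $h(t)\le g(0)\,e^{-2L_F(\lambda_m^\eps)^\alpha t}$. Undoing $h(t)=e^{\lambda_m^\eps t}g(t)$ gives exactly $g(t)\le g(0)\,e^{-[2L_F(\lambda_m^\eps)^\alpha+\lambda_m^\eps]t}$, which is the assertion.

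The only genuinely delicate point is the backward-in-time bookkeeping: one must verify that over the integration range the exponents are nonpositive, so that the growth bounds \eqref{semigrupoproyectadoP}, \eqref{semigrupoproyectadoP-alpha} --- and not the forward decay bound \eqref{semigrupoproyectado} --- are the ones in force, and one must apply Gronwall's lemma in its backward form. A shorter alternative avoids the variation of constants entirely by working in eigen-coordinates: writing $w=\sum_{i=1}^m w_i\varphi_i^\eps$ and using $\lambda_i^\eps\le\lambda_m^\eps$ for $i\le m$, one finds $\frac{d}{dt}\|w\|_{X_\eps^\alpha}^2\ge-2[\lambda_m^\eps+2L_F(\lambda_m^\eps)^\alpha]\|w\|_{X_\eps^\alpha}^2$, so that $e^{2[\lambda_m^\eps+2L_F(\lambda_m^\eps)^\alpha]t}\|w(t)\|_{X_\eps^\alpha}^2$ is non-decreasing and, for $t\le 0$, is bounded by its value at $t=0$; this yields the same estimate.
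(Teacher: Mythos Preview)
Your proof is correct and follows essentially the same route as the paper's: variation of constants based at $t=0$, the backward semigroup bounds \eqref{semigrupoproyectadoP}--\eqref{semigrupoproyectadoP-alpha} on $\mathbf{P}_{\mathbf{m}}^{\bm\eps}X_\eps^\alpha$, the Lipschitz estimate $(1+L)L_F\le 2L_F$ for the composed nonlinearity, and then Gronwall. The paper's proof is terser --- it writes the integral inequality and invokes Gronwall directly --- while you spell out the backward-time bookkeeping and the substitution $h(s)=e^{\lambda_m^\eps s}g(s)$; your additional eigen-coordinate differential-inequality argument is a genuine alternative that the paper does not mention, but it is offered only as a remark and does not change the fact that your main argument coincides with the paper's.
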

\begin{proof}
By the variation of constants formula,
$$p_\eps^1(t)-p_\eps^2(t)=e^{-A_\eps t}[p_\eps^1(0)-p_\eps^2(0)]+$$
$$+\int_0^te^{-A_\eps(t-s)}\mathbf{P}_{\mathbf{m}}^{\bm\eps}[F_\eps(p_\eps^1(s)+\Psi_\eps(p_\eps^1(s)))-F_\eps(p_\eps^2(s)+\Psi_\eps(p_\eps^2(s)))]ds.$$
Hence, applying \eqref{semigrupoproyectadoP} and \eqref{semigrupoproyectadoP-alpha} and taking into account that  $\Psi_\varepsilon, F_\varepsilon$ are uniformly Lipschitz with Lipschitz constants $L<1$ and $L_F$, respectively, we get

$$\|p_\eps^1(t)-p_\eps^2(t)\|_{X_\eps^\alpha}\leq e^{-\lambda_m^\eps t}\|p_\eps^1(0)-p_\eps^2(0)\|_{X_\eps^\alpha}+2L_F(\lambda_m^\eps)^\alpha\int_t^0e^{-\lambda_m^\eps(t-s)}\|p_\eps^1(s)-p_\eps^2(s)\|_{X_\eps^\alpha}ds. $$
By Gronwall inequality,
$$\|p_\eps^1(t)-p_\eps^2(t)\|_{X_\eps^\alpha}\leq \|p_\eps^1(0)-p_\eps^2(0)\|_{X_\eps^\alpha}e^{-[2L_F(\lambda_m^\eps)^\alpha+\lambda_m^\eps]t},$$
as we wanted to prove.
\end{proof}

\begin{lem}\label{Jnorm}
Let $\Psi_\varepsilon\in\mathcal{\tilde F}_\eps(L, R)$ with $L<1$ and $\Upsilon_\eps\in \mathcal{E}_\eps$, $0<\varepsilon\leq\eps_0$. Then, for $t\leq 0$,
$$\|\Theta_\varepsilon(p_\varepsilon^0, t)\|_{\mathcal{L}}
\leq  e^{-[2L_F(\lambda_m^\varepsilon)^\alpha+\lambda_m^\varepsilon]t}.$$
\end{lem}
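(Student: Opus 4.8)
The plan is to treat the operator-valued linear equation \eqref{equationTheta-section6} as an inhomogeneous linear ODE driven by the projected linear semigroup $e^{-A_\eps \mathbf{P}_{\mathbf{m}}^{\bm\eps} t}$, in exactly the same spirit as the proof of Lemma \ref{distp-section6}. The hypotheses $\Psi_\eps\in\mathcal{\tilde F}_\eps(L,R)$ and $\Upsilon_\eps\in\mathcal E_\eps$ guarantee that the solution $p_\eps(\cdot)$ of \eqref{equationp}, hence $u_\eps=p_\eps+\Psi_\eps(p_\eps)$, is globally defined and that the coefficient $\mathbf{P}_{\mathbf{m}}^{\bm\eps}DF_\eps(u_\eps(t))(I+\Upsilon_\eps(p_\eps(t)))$ is well defined and uniformly bounded. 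Writing $\Theta(t)=\Theta_\eps(p_\eps^0,t)$ and applying the variation of constants formula with initial datum $\Theta(0)=I$ on $\mathbf{P}_{\mathbf{m}}^{\bm\eps}X_\eps^\alpha$, I would obtain, for $t\le 0$,
\[
\Theta(t)=e^{-A_\eps \mathbf{P}_{\mathbf{m}}^{\bm\eps} t}I+\int_0^t e^{-A_\eps \mathbf{P}_{\mathbf{m}}^{\bm\eps}(t-s)}\,\mathbf{P}_{\mathbf{m}}^{\bm\eps}DF_\eps(u_\eps(s))\big(I+\Upsilon_\eps(p_\eps(s))\big)\Theta(s)\,ds .
\]

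Next I would take the operator norm $\|\cdot\|_{\mathcal L}$ (here the relevant norm on $\mathcal{L}(\mathbf{P}_{\mathbf{m}}^{\bm\eps}X_\eps^\alpha,\mathbf{P}_{\mathbf{m}}^{\bm\eps}X_\eps^\alpha)$) and estimate each factor. For the homogeneous term, since $t\le 0$, estimate \eqref{semigrupoproyectadoP} gives $\|e^{-A_\eps \mathbf{P}_{\mathbf{m}}^{\bm\eps} t}\|_{\mathcal L(X_\eps^\alpha,X_\eps^\alpha)}\le e^{-\lambda_m^\eps t}$. In the Duhamel integrand the composition first lands in the base space $\mathbf{P}_{\mathbf{m}}^{\bm\eps}X_\eps$, because $DF_\eps$ maps $X_\eps^\alpha$ into $X_\eps$; hence one must use the smoothing estimate \eqref{semigrupoproyectadoP-alpha}, namely $\|e^{-A_\eps \mathbf{P}_{\mathbf{m}}^{\bm\eps}(t-s)}\|_{\mathcal L(X_\eps,X_\eps^\alpha)}\le e^{-\lambda_m^\eps(t-s)}(\lambda_m^\eps)^\alpha$ for $t-s\le 0$, which is where the factor $(\lambda_m^\eps)^\alpha$ enters. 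The remaining factors are controlled by $\|DF_\eps(u_\eps(s))\|_{\mathcal L(X_\eps^\alpha,X_\eps)}\le L_F$ (the derivative bound following from the uniform global Lipschitz hypothesis (H2)(b)) and by $\|I+\Upsilon_\eps(p_\eps(s))\|\le 1+\|\Upsilon_\eps(p_\eps(s))\|_{\mathcal L}\le 2$, using $\Upsilon_\eps\in\mathcal E_\eps$. Collecting these bounds yields, for $t\le 0$,
\[
\|\Theta(t)\|_{\mathcal L}\le e^{-\lambda_m^\eps t}+2L_F(\lambda_m^\eps)^\alpha\int_t^0 e^{-\lambda_m^\eps(t-s)}\|\Theta(s)\|_{\mathcal L}\,ds .
\]

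To close the argument I would set $g(s)=e^{\lambda_m^\eps s}\|\Theta(s)\|_{\mathcal L}$, which reduces the inequality to $g(t)\le 1+2L_F(\lambda_m^\eps)^\alpha\int_t^0 g(s)\,ds$. Applying the backward-in-time Gronwall inequality on $[t,0]$ (the same orientation used in Lemma \ref{distp-section6}) gives $g(t)\le e^{-2L_F(\lambda_m^\eps)^\alpha t}$, and undoing the substitution produces $\|\Theta(t)\|_{\mathcal L}\le e^{-[2L_F(\lambda_m^\eps)^\alpha+\lambda_m^\eps]t}$, which is precisely the claim.

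I do not expect any conceptual obstacle; the argument is structurally identical to Lemma \ref{distp-section6}. The points requiring care are purely bookkeeping: one must distinguish the two roles of the projected semigroup (the $X_\eps^\alpha\to X_\eps^\alpha$ bound for the homogeneous term versus the smoothing $X_\eps\to X_\eps^\alpha$ bound for the Duhamel term, which alone carries the $(\lambda_m^\eps)^\alpha$ weight), and one must apply Gronwall in the correct backward orientation since $t\le 0$. It is also worth noting the clean parallel with Lemma \ref{distp-section6}: there the prefactor $2L_F$ arose from $1+L\le 2$, whereas here the same $2$ arises instead from $\|I+\Upsilon_\eps\|\le 2$.
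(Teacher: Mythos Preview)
Your proof is correct and follows essentially the same approach as the paper: variation of constants applied to \eqref{equationTheta-section6}, the bounds \eqref{semigrupoproyectadoP} and \eqref{semigrupoproyectadoP-alpha} for the projected semigroup, $\|DF_\eps\|\le L_F$ and $\|I+\Upsilon_\eps\|\le 2$, followed by backward Gronwall. The only cosmetic difference is that the paper applies the estimate to a fixed test vector $z_\eps\in\mathbf{P}_{\mathbf{m}}^{\bm\eps}X_\eps^\alpha$ and then passes to the supremum, whereas you work directly at the level of the operator norm $\|\cdot\|_{\mathcal L}$.
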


\begin{proof}
If $z_\eps\in\mathbf{P}_{\mathbf{m}}^{\bm\varepsilon}X_\varepsilon^\alpha$, with the aid of the variation of constants formula applied to (\ref{equationTheta-section6}), we have for $t\leq 0$, 
$$\|\Theta_\varepsilon(p_\varepsilon^0, t)z_\varepsilon\|_{X_\varepsilon^\alpha}\leq \|e^{-A_\varepsilon\mathbf{P}_{\mathbf{m}}^{\bm\varepsilon}t} z_\varepsilon\|_{X_\varepsilon^\alpha}+$$
$$+\int_t^0\left\|e^{-A_\varepsilon\mathbf{P}_{\mathbf{m}}^{\bm\varepsilon}(t-s)}\mathbf{P}_{\mathbf{m}}^{\bm\varepsilon}DF_\varepsilon(u_\varepsilon(s))(I+\Upsilon_\varepsilon(p_\varepsilon(s)))\Theta_\varepsilon(p_\varepsilon^0, s)z_\varepsilon\right\|_{X_\varepsilon^\alpha}ds.$$
Hence as before, 
$$\|\Theta_\varepsilon(p_\varepsilon^0, t)z_\varepsilon\|_{X_\varepsilon^\alpha}\leq e^{-\lambda_m^\varepsilon t}\|z_\eps\|_{X_\eps^\alpha}+2L_F (\lambda_m^\varepsilon)^\alpha\int_t^0e^{-\lambda_m^\varepsilon (t-s)}\|\Theta_\varepsilon(p_\varepsilon^0, s)z_\varepsilon\|_{X_\varepsilon^\alpha}.$$
Using Gronwall inequality, we get 
$$\|\Theta_\varepsilon(p_\varepsilon^0, t)z_\eps\|_{X_\varepsilon^\alpha}\leq e^{-[2L_F(\lambda_m^\varepsilon)^\alpha+\lambda_m^\varepsilon]  t}\|z_\eps\|_{X_\varepsilon^\alpha}$$
from where we get the result. 
\end{proof}

\begin{lem}\label{distThetaEpsilon}
Let  $0<\theta\leq \theta_F$  and $M>0$ fixed. Let $p_\eps^1, p_\eps^2\in\mathbf{P}_{\mathbf{m}}^{\bm\eps} X_\eps^\alpha$ and consider  $\Theta_\eps^1(t)=\Theta_\eps(p_\eps^1, t)$,  $\Theta_\eps^2(t)=\Theta_\eps(p_\eps^2, t)$ the  solutions of (\ref{equationTheta-section6}) for some  $\Upsilon_\eps\in \mathcal{E}_\eps^{\theta,M}$. Then, for $t\leq 0$,
$$\|\Theta_\eps^1(t)-\Theta_\eps^2(t)\|_{\mathcal{L}}
\leq \left(\frac{2L}{(\theta+1)L_F}+\frac{M}{2(\theta+1)}\right)\|p_\eps^1-p_\eps^2\|_{X_\eps^\alpha}^\theta\,\, e^{-(2(\theta+2)L_F(\lambda_m^\eps)^\alpha +(\theta+1)\lambda_m^\eps) t}.$$  
\end{lem}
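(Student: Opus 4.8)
The plan is to set $W(t)=\Theta_\eps^1(t)-\Theta_\eps^2(t)$, observe that $W(0)=0$, and write a variation of constants formula for $W$ starting from the linear equation (\ref{equationTheta-section6}). Writing $u_\eps^i(s)=p_\eps^i(s)+\Psi_\eps(p_\eps^i(s))$ and $B^i(s)=\mathbf{P}_{\mathbf{m}}^{\bm\eps}DF_\eps(u_\eps^i(s))(I+\Upsilon_\eps(p_\eps^i(s)))$, the two copies of (\ref{equationTheta-section6}) differ only through their coefficient operators, so for $t\le 0$ one gets
\[
W(t)=\int_0^t e^{-A_\eps\mathbf{P}_{\mathbf{m}}^{\bm\eps}(t-s)}\big[B^1(s)W(s)+(B^1(s)-B^2(s))\Theta_\eps^2(s)\big]\,ds .
\]
This separates the forcing into a diagonal part proportional to $W$ and an inhomogeneous part carrying the difference of the coefficients acting on $\Theta_\eps^2$.

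Next I would estimate each factor. For the diagonal part I use $\|DF_\eps\|_{\mathcal{L}(X_\eps^\alpha,X_\eps)}\le L_F$ and $\|I+\Upsilon_\eps(p)\|_{\mathcal L}\le 2$, exactly as in Lemma \ref{Jnorm}, which after applying the projected semigroup bound (\ref{semigrupoproyectadoP-alpha}) produces a factor $2L_F(\lambda_m^\eps)^\alpha$ together with $e^{-\lambda_m^\eps(t-s)}$. For the inhomogeneous part I decompose $B^1-B^2=\mathbf{P}_{\mathbf{m}}^{\bm\eps}[DF_\eps(u_\eps^1)-DF_\eps(u_\eps^2)](I+\Upsilon_\eps(p_\eps^1))+\mathbf{P}_{\mathbf{m}}^{\bm\eps}DF_\eps(u_\eps^2)[\Upsilon_\eps(p_\eps^1)-\Upsilon_\eps(p_\eps^2)]$. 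The first summand is controlled by the $C^{1,\theta_F}$ bound of {\bf(H2')} together with $\|u_\eps^1(s)-u_\eps^2(s)\|_{X_\eps^\alpha}\le(1+L)\|p_\eps^1(s)-p_\eps^2(s)\|_{X_\eps^\alpha}$ (Lipschitz continuity of $\Psi_\eps$) and Lemma \ref{distp-section6}; the second summand is controlled by the defining H\"older bound of $\mathcal{E}_\eps^{\theta,M}$ and again Lemma \ref{distp-section6}. Finally $\|\Theta_\eps^2(s)\|_{\mathcal L}\le e^{-[2L_F(\lambda_m^\eps)^\alpha+\lambda_m^\eps]s}$ by Lemma \ref{Jnorm}.

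Writing $g(t)=\|W(t)\|_{\mathcal L}$ and $\mu=2L_F(\lambda_m^\eps)^\alpha+\lambda_m^\eps$, these estimates assemble into a backward integral (Gronwall) inequality of the form
\[
g(t)\le (\lambda_m^\eps)^\alpha\int_t^0 e^{-\lambda_m^\eps(t-s)}\Big[2L_F\,g(s)+D\,\|p_\eps^1-p_\eps^2\|_{X_\eps^\alpha}^\theta\,e^{-(\theta+1)\mu s}\Big]\,ds ,
\]
where $D$ collects the constants $L,L_F,M$ (the $\Upsilon$-term contributing the part with $L_FM$ and the $DF$-term the part with $L$). I would then close the estimate with the supersolution ansatz $g(t)\le K\|p_\eps^1-p_\eps^2\|_{X_\eps^\alpha}^\theta e^{-\gamma t}$: substituting it into the right-hand side, evaluating the two elementary integrals $\int_t^0 e^{\beta s}ds\le e^{\beta t}/(-\beta)$, and choosing $\gamma=2(\theta+2)L_F(\lambda_m^\eps)^\alpha+(\theta+1)\lambda_m^\eps$ so that both resulting exponentials are dominated by $e^{-\gamma t}$ for $t\le 0$, forces $K$ to satisfy an inequality solved by $K=\frac{2L}{(\theta+1)L_F}+\frac{M}{2(\theta+1)}$; equivalently one runs the substitution $G(t)=e^{\lambda_m^\eps t}g(t)$ and invokes the standard backward Gronwall lemma.

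The main obstacle is twofold. First, reconciling the H\"older exponent $\theta_F$ from {\bf(H2')} with the target exponent $\theta\le\theta_F$ in the $DF$ increment: since $F_\eps$, and hence $DF_\eps$, is supported in $D_R$, the relevant increments take place within a set of diameter at most $2R$, so $\|DF_\eps(u_\eps^1)-DF_\eps(u_\eps^2)\|\le L(2R)^{\theta_F-\theta}\|u_\eps^1-u_\eps^2\|_{X_\eps^\alpha}^{\theta}$ (with the $R$-dependent factor absorbed into the constant), which is precisely what lets the clean $\theta$-power appear. Second, the bookkeeping of the backward Gronwall so that the stated rate $\gamma$ and constant emerge exactly: the extra summand $2L_F(\lambda_m^\eps)^\alpha$ in $\gamma$, beyond the naive forcing rate $(\theta+1)\mu$, is the margin generated by the diagonal $2L_F g(s)$ term, and it is exactly this margin that keeps the constant bounded (with no $1/\theta$ blow-up) as $\theta\to 0$.
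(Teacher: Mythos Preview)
Your proposal is correct and follows essentially the same route as the paper: variation of constants for $\Theta_\eps^1-\Theta_\eps^2$, the same three-way splitting of the coefficient difference (the paper groups it as $I_1+I_2+I_3$ with the roles of $\Theta_\eps^1$ and $\Theta_\eps^2$ swapped, which is immaterial), the bounds from Lemmas \ref{distp-section6} and \ref{Jnorm}, and a backward Gronwall that produces exactly the stated constant and rate. Your explicit remark on passing from the $\theta_F$-H\"older bound of {\bf(H2')} to the $\theta$-H\"older bound via the compact support of $F_\eps$ is a point the paper uses implicitly without comment.
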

\begin{proof}
Applying the variation of constants formula to (\ref{equationTheta-section6}), for $t\leq 0$,
$$\|\Theta_\eps^1(t)-\Theta_\eps^2(t)\|_{\mathcal{L}}\leq 
\int_t^0\Big\|e^{-A_\eps\mathbf{P}_{\mathbf{m}}^{\bm\eps}(t-s)}\mathbf{P}_{\mathbf{m}}^{\bm\eps}[DF_\eps(u_\eps^1(s))(I+\Upsilon_\eps(p_\eps^1(s)))\Theta^1_\eps(s)$$
$$-DF_\eps(u_\eps^2(s))(I+\Upsilon_\eps(p_\eps^2(s)))\Theta^2_\eps(s)]\Big\|_{\mathcal{L}}ds$$
with $u_\eps^i(s)=p_\eps^i(s)+\Psi_\eps(p_\eps^i(s))$, $i= 1, 2$.

We can decompose the above integral in the following way,
$$\|\Theta_\eps^1(t)-\Theta_\eps^2(t)\|_{\mathcal{L}}\leq$$
$$\leq\int_t^0\Big\|e^{-A_\eps\mathbf{P}_{\mathbf{m}}^{\bm\eps}(t-s)}\mathbf{P}_{\mathbf{m}}^{\bm\eps} [DF_\eps(u_\eps^1(s))-DF_\eps(u_\eps^2(s))](I+\Upsilon_\eps(p_\eps^1(s)))\Theta_\eps^1(s)\Big\|_{\mathcal{L}}ds+$$
$$+\int_t^0\Big\|e^{-A_\eps\mathbf{P}_{\mathbf{m}}^{\bm\eps}(t-s)}\mathbf{P}_{\mathbf{m}}^{\bm\eps} DF_\eps(u_\eps^2(s))(\Upsilon_\eps(p_\eps^1(s))-\Upsilon_\eps(p_\eps^2(s)))\Theta_\eps^1(s)\Big\|_{\mathcal{L}}ds+$$
$$+\int_t^0\Big\|e^{-A_\eps\mathbf{P}_{\mathbf{m}}^{\bm\eps}(t-s)}\mathbf{P}_{\mathbf{m}}^{\bm\eps} DF_\eps(u_\eps^2(s))[(I+\Upsilon_\eps(p_\eps^2(s)))(\Theta_\eps^1(s)-\Theta_\eps^2(s))\Big\|_{\mathcal{L}}ds=$$
$$=I_1+I_2+I_3.$$
We analyze each term separately. 

By hipothesis {\bf(H2')}, \eqref{semigrupoproyectadoP-alpha} and Lemma \ref{Jnorm}, 
$$I_1\leq 2 L(\lambda_m^\eps)^\alpha e^{-\lambda_m^\eps t}\int_t^0 \|u^1_\eps(s)-u^2_\eps(s)\|_{X_\eps^\alpha}^\theta e^{-2L_F(\lambda_m^\eps)^\alpha s}ds\leq $$
$$\leq 4L(\lambda_m^\eps)^\alpha e^{-\lambda_m^\eps t}\int_t^0 \|p_\eps^1(s)-p_\eps^2(s)\|_{X_\eps^\alpha}^\theta e^{-2L_F(\lambda_m^\eps)^\alpha s}ds.$$
Applying Lemma \ref{distp-section6},
$$I_1\leq \frac{2L}{(\theta+1)L_F}\|p_\eps^1-p_\eps^2\|_{X_\eps^\alpha}^\theta e^{-[2(\theta+1)L_F(\lambda_m^\eps)^\alpha+(\theta+1)\lambda_m^\eps]t}.$$
Since $\Upsilon_\eps\in \mathcal{E}_\eps^{\theta,M}$, $0<\theta\leq\theta_F$, and by Lemma \ref{Jnorm}, we have
$$I_2\leq  L_F(\lambda_m^\eps)^\alpha Me^{-\lambda_m^\eps t}\int_t^0 \|p^1_\eps(s)-p^2_\eps(s)\|_{X_\eps^\alpha}^\theta e^{-2L_F(\lambda_m^\eps)^\alpha s} ds.$$
Applying Lemma \ref{distp-section6},
$$I_2\leq\frac{M}{2(\theta+1)}\|p_\eps^1-p_\eps^2\|_{X_\eps^\alpha}^\theta e^{-[2(\theta+1)L_F(\lambda_m^\eps)^\alpha + (\theta+1)\lambda_m^\eps]t}.$$
This last term is estimated as follows, 
$$I_3\leq 2L_F(\lambda_m^\eps)^\alpha\int_t^0e^{-\lambda_m^\eps(t- s)}\|\Theta_\eps^1(s)-\Theta_\eps^2(s)\|_{\mathcal{L}}ds.$$

So,
$$\|\Theta_\eps^1(t)-\Theta_\eps^2(t)\|_{\mathcal{L}}\leq $$
$$ \left(\frac{2L}{(\theta+1)L_F}+\frac{M}{2(\theta+1)}\right)\|p_\eps^1-p_\eps^2\|_{X_\eps^\alpha}^\theta e^{-[2(\theta+1)L_F(\lambda_m^\eps)^\alpha+(\theta+1)\lambda_m^\eps]t}$$
$$+2L_F(\lambda_m^\eps)^\alpha\int_t^0e^{-\lambda_m^\eps (t-s)}\|\Theta_\eps^1(s)-\Theta_\eps^2(s)\|_{\mathcal{L}}ds.$$
Applying Gronwall inequality,
$$\|\Theta_\eps^1(t)-\Theta_\eps^2(t)\|_{\mathcal{L}}\leq $$
$$\leq \left(\frac{2L}{(\theta+1)L_F}+\frac{M}{2(\theta+1)}\right)\|p_\eps^1-p_\eps^2\|_{X_\eps^\alpha}^\theta e^{-[2(\theta+2)L_F(\lambda_m^\eps)^\alpha+(\theta+1)\lambda_m^\eps]t},$$
which shows the result.
\end{proof}

\bigskip

For the sake of notation, there are several exponents that repeat themselves very often and they are kind of long. We will abbreviate the exponents as follows:

\begin{equation}\label{def-exponents}
\begin{array}{l}
\Lambda_0=2L_F(\lambda_m^\eps)^\alpha +\lambda_m^\eps \\
\Lambda_1=\lambda_{m+1}^\eps -(\theta+1)\lambda_m^\eps-2(\theta+1) L_F(\lambda_m^\eps)^\alpha \\
\Lambda_2=\lambda_{m+1}^\eps -(\theta+1)\lambda_m^\eps-2(\theta+2) L_F(\lambda_m^\eps)^\alpha
\end{array}
\end{equation}

We can prove now the following Lemma.
\begin{lem}\label{PsiUniform}
If we choose $\theta$ such that $0<\theta\leq\theta_F$ and $\theta< \theta_0$ with $\theta_0$ given by \eqref{theta}, then there exist $M_0=M_0(\theta)>0$ such that for each $M\geq M_0$ and for $\eps$ small enough, we have 
$\mathbf{D}_{\bm\eps}(\Psi_\eps, \cdot)$ maps $\mathcal{E}_\eps^{\theta,M}$ into $\mathcal{E}_\eps^{\theta,M}$. 
\end{lem}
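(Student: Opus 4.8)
The plan is to verify both defining properties of $\mathcal{E}_\eps^{\theta,M}$ for the image $\mathbf{D}_{\bm\eps}(\Psi_\eps,\Upsilon_\eps)$ when $\Upsilon_\eps\in\mathcal{E}_\eps^{\theta,M}$. Membership in $\mathcal{E}_\eps$ itself (the pointwise bound $\|\mathbf{D}_{\bm\eps}(\Psi_\eps,\Upsilon_\eps)(\eta)\|_{\mathcal{L}}\le 1$) is already part of the $C^1$ fixed-point construction, where $\mathbf{\Pi}_{\bm\eps}$ is shown to map $\tilde{\mathcal{F}}_\eps(L,R)\times\mathcal{E}_\eps$ into itself (see \cite{Sell&You,ChowLuSell}), so the genuine task is the H\"older estimate $\|\mathbf{D}_{\bm\eps}(\Psi_\eps,\Upsilon_\eps)(\eta_1)-\mathbf{D}_{\bm\eps}(\Psi_\eps,\Upsilon_\eps)(\eta_2)\|_{\mathcal{L}}\le M\,\|\eta_1-\eta_2\|_{X_\eps^\alpha}^\theta$. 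Before starting I would record one elementary reduction: since $F_\eps$ is $L_F$-Lipschitz, $\|DF_\eps(u)\|_{\mathcal{L}(X_\eps^\alpha,X_\eps)}\le L_F$ for every $u$, and combining this uniform bound with the $\theta_F$-H\"older continuity of $DF_\eps$ from {\bf(H2')} shows that $DF_\eps$ is in fact globally $\theta$-H\"older for every $\theta\le\theta_F$, with constant $\tilde L:=\max\{L,2L_F\}$ independent of $\eps$ (split according to whether $\|u-u'\|_{X_\eps^\alpha}$ is $\le 1$ or $\ge 1$). This is what allows the $\theta_F$-regularity of the nonlinearity to be used with the exponent $\theta$ throughout.

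Next I would fix $\Upsilon_\eps\in\mathcal{E}_\eps^{\theta,M}$ and $\eta_1,\eta_2$, write $p^i(\cdot)$ for the solutions of \eqref{equationp} with $p^i(0)=\eta_i$, set $u^i=p^i+\Psi_\eps(p^i)$ and $\Theta^i=\Theta_\eps(\eta_i,\cdot)$, and form the difference of the two integrals \eqref{differential-Psi}. The integrand bracket is split by the same telescoping as in Lemma \ref{distThetaEpsilon}, into $[DF_\eps(u^1)-DF_\eps(u^2)](I+\Upsilon_\eps(p^1))\Theta^1$, then $DF_\eps(u^2)(\Upsilon_\eps(p^1)-\Upsilon_\eps(p^2))\Theta^1$, and finally $DF_\eps(u^2)(I+\Upsilon_\eps(p^2))(\Theta^1-\Theta^2)$, giving three terms $J_1,J_2,J_3$. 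Each is estimated with the high-mode smoothing bound \eqref{semigrupoproyectado}, together with: the global $\theta$-H\"older bound on $DF_\eps$ and Lemma \ref{distp-section6} (using $\|u^1(s)-u^2(s)\|_{X_\eps^\alpha}\le 2\|p^1(s)-p^2(s)\|_{X_\eps^\alpha}$) for $J_1$; the defining H\"older inequality of $\mathcal{E}_\eps^{\theta,M}$ and Lemma \ref{Jnorm} for $J_2$; and Lemma \ref{distThetaEpsilon} together with $\|I+\Upsilon_\eps\|_{\mathcal{L}}\le 2$ and $\|DF_\eps\|_{\mathcal{L}(X_\eps^\alpha,X_\eps)}\le L_F$ for $J_3$. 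After factoring out $\|\eta_1-\eta_2\|_{X_\eps^\alpha}^\theta$, what remains in every term is an integral of the form $\int_{-\infty}^0\|e^{A_\eps\mathbf{Q}_{\mathbf{m}}^{\bm\eps}s}\|_{\mathcal{L}(X_\eps,X_\eps^\alpha)}e^{-cs}\,ds$, which by \eqref{semigrupoproyectado} is bounded to leading order by $(\lambda_{m+1}^\eps)^\alpha/(\lambda_{m+1}^\eps-c)$ with a lower-order $O((\lambda_{m+1}^\eps)^{\alpha-1})$ correction. The exponents $c$ arising are $(\theta+1)\Lambda_0$ for $J_1,J_2$ and $2(\theta+2)L_F(\lambda_m^\eps)^\alpha+(\theta+1)\lambda_m^\eps$ for $J_3$, so the denominators are exactly $\Lambda_1$ and $\Lambda_2$ of \eqref{def-exponents}, and convergence needs only $\Lambda_1,\Lambda_2>0$.

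Collecting the three estimates yields a bound of the shape $(a+bM)\,\|\eta_1-\eta_2\|_{X_\eps^\alpha}^\theta$, where $a$ gathers the $M$-independent contributions ($J_1$ and the $L$-part of the $\|\Theta^1-\Theta^2\|_{\mathcal{L}}$ coefficient from Lemma \ref{distThetaEpsilon}) and $b$ gathers the $M$-linear contributions of $J_2$ and of $J_3$. The heart of the argument is to check $b<1$: the crude leading-order estimate gives $b\le L_F(\lambda_{m+1}^\eps)^\alpha/\Lambda_1+L_F(\lambda_{m+1}^\eps)^\alpha/((\theta+1)\Lambda_2)\le 2L_F(\lambda_{m+1}^\eps)^\alpha/\Lambda_2$, so $b<1$ is ensured precisely by $\Lambda_2>2L_F(\lambda_{m+1}^\eps)^\alpha$, which is exactly the condition $\theta<\theta_0$ with $\theta_0$ as in \eqref{theta}; indeed the term $-2L_F(\lambda_{m+1}^0)^\alpha$ in the numerator of $\theta_0$ is nothing but the $(\lambda_{m+1})^\alpha$ factor produced by the parabolic smoothing \eqref{semigrupoproyectado}. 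Once $b<1$ is secured, taking $M_0:=a/(1-b)$ makes $a+bM\le M$ for all $M\ge M_0$, which is the desired inclusion; uniformity in $\eps$ follows because all eigenvalue quantities converge to their $\eps=0$ values, so the inequalities $\Lambda_1,\Lambda_2>0$ and $b<1$ stated at $\eps=0$ persist for $\eps$ small (this is also why $\theta_0$ is written with the limiting eigenvalues $\lambda^0$). The main obstacle is exactly this last bookkeeping: the self-referential appearance of $M$ inside $\|\Theta^1-\Theta^2\|_{\mathcal{L}}$ in Lemma \ref{distThetaEpsilon} forces $b>0$, so one must track the constants carefully enough to see that $\theta<\theta_0$ delivers $b<1$ rather than merely $\Lambda_1,\Lambda_2>0$, and the lower-order $O((\lambda_{m+1}^\eps)^{\alpha-1})$ corrections must be absorbed using that the gap condition \eqref{autovalorgrande0} forces the eigenvalues to be large.
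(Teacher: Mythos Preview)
Your proposal is correct and follows essentially the same approach as the paper: the same three-term telescoping of the integrand of \eqref{differential-Psi}, the same use of Lemmas \ref{distp-section6}, \ref{Jnorm} and \ref{distThetaEpsilon} for the pieces, and the same endgame of writing the bound as $(a+bM)\|\eta_1-\eta_2\|_{X_\eps^\alpha}^\theta$ with $b\le 2L_F(\lambda_{m+1}^\eps)^\alpha/\Lambda_2<1$ precisely when $\theta<\theta_0$. You are slightly more explicit than the paper about two points it glosses over---the passage from $\theta_F$-H\"older to $\theta$-H\"older continuity of $DF_\eps$, and the handling of the $\max\{\lambda_{m+1}^\eps,\alpha/t\}^\alpha$ factor in \eqref{semigrupoproyectado}---but these are clarifications, not a different route.
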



\bigskip

\begin{proof}
Let $\Upsilon_\eps\in \mathcal{E}_\eps^{\theta,M}$ and $p_\eps^1, p_\eps^2\in\mathbf{P}_{\mathbf{m}}^{\bm\eps}X_\eps^\alpha$. In \cite{Sell&You} the authors prove $\mathbf{D}_{\bm\eps}(\Psi_\eps, \cdot)$ maps $\mathcal{E}_\eps$ into $\mathcal{E}_\eps$. So, it remains to prove that,
$$\|\mathbf{D}_{\bm\eps}(\Psi_\eps, \Upsilon_\eps)(p_\eps^1)-\mathbf{D}_{\bm\eps}(\Psi_\eps, \Upsilon_\eps)(p_\eps^2)\|_{\mathcal{L}}\leq M\|p_\eps^1-p_\eps^2\|_{X_\eps^\alpha}^\theta,$$
with $M$ and $\theta$ as in the statement. 

From expression (\ref{differential-Psi}), we have,
$$\|\mathbf{D}_{\bm\eps}(\Psi_\eps, \Upsilon_\eps)(p_\eps^1)-\mathbf{D}_{\bm\eps}(\Psi_\eps, \Upsilon_\eps)(p_\eps^2)\|_{\mathcal{L}}\leq$$
$$\int_{-\infty}^0\Big\|e^{A_\eps\mathbf{Q}_{\mathbf{m}}^{\bm\eps}s}\mathbf{Q}_{\mathbf{m}}^{\bm\eps}[DF_\eps(u_\eps^1(s))(I+\Upsilon_\eps(p_\eps^1(s)))\Theta^1_\eps(s)-DF_\eps(u_\eps^2(s))(I+\Upsilon_\eps(p_\eps^2(s)))\Theta^2_\eps(s)]\Big\|_{\mathcal{L}}ds,$$
with $p_\eps^i(s)$ the solution of (\ref{equationp}) with $p_\eps^i(0)=p_\eps^i$ and $u_\eps^i(s)=p_\eps^i(s)+\Psi_\eps(p_\eps^i(s))$, for $i=1, 2$.

In a similar way as in proof of Lemma \ref{distThetaEpsilon}, we decompose it as follows,
$$\|\mathbf{D}_{\bm\eps}(\Psi_\eps, \Upsilon_\eps)(p_\eps^1)-\mathbf{D}_{\bm\eps}(\Psi_\eps, \Upsilon_\eps)(p_\eps^2)\|_{\mathcal{L}}\leq$$
$$\leq\int_{-\infty}^0\Big\|e^{A_\eps\mathbf{Q}_{\mathbf{m}}^{\bm\eps}s}\mathbf{Q}_{\mathbf{m}}^{\bm\eps} [DF_\eps(u_\eps^1(s))-DF_\eps(u_\eps^2(s))](I+\Upsilon_\eps(p_\eps^1(s)))\Theta_\eps^1(s)\Big\|_{\mathcal{L}}ds+$$
$$+\int_{-\infty}^0\Big\|e^{A_\eps\mathbf{Q}_{\mathbf{m}}^{\bm\eps}s}\mathbf{Q}_{\mathbf{m}}^{\bm\eps} DF_\eps(u_\eps^2(s))[\Upsilon_\eps(p_\eps^1(s))-\Upsilon_\eps(p_\eps^2(s))]\Theta_\eps^1(s)\Big\|_{\mathcal{L}}ds+$$
$$+\int_{-\infty}^0\Big\|e^{A_\eps\mathbf{Q}_{\mathbf{m}}^{\bm\eps}s}\mathbf{Q}_{\mathbf{m}}^{\bm\eps} DF_\eps(u_\eps^2(s))(I+\Upsilon_\eps(p_\eps^2(s)))[\Theta_\eps^1(s)-\Theta_\eps^2(s)]\Big\|_{\mathcal{L}}ds=$$
$$=I_1+I_2+I_3.$$
Following the same arguments used in that proof and since $\Upsilon_\eps\in \mathcal{E}_\eps^{\theta,M}$ we get
$$I_1\leq 4L(\lambda_{m+1}^\eps)^\alpha\|p_\eps^1-p_\eps^2\|_{X_\eps^\alpha}^\theta\int_{-\infty}^0 e^{\Lambda_1 s}ds\leq\frac{4L(\lambda_{m+1}^\eps)^\alpha}{\Lambda_1}\|p_\eps^1-p_\eps^2\|_{X_\eps^\alpha}^\theta$$
Similarly, for $I_2$,
$$I_2\leq L_F(\lambda_{m+1}^\eps)^\alpha M\|p_\eps^1-p_\eps^2\|_{X_\eps^\alpha}^\theta\int_{-\infty}^0 e^{\Lambda_1 s}ds\leq 
\frac{L_F(\lambda_{m+1}^\eps)^\alpha M}{\Lambda_1}\|p_\eps^1-p_\eps^2\|_{X_\eps^\alpha}^\theta.$$
And finally, applying Lemma \ref{distThetaEpsilon},
$$I_3\leq 2L_F(\lambda_{m+1}^\eps)^\alpha \left(\frac{2L}{(\theta+1)L_F}+\frac{M}{2(\theta+1)}\right)\|p_\eps^1-p_\eps^2\|_{X_\eps^\alpha}^\theta\int_{-\infty}^0 e^{-\Lambda_2 s}ds$$
which implies,
$$I_3\leq\frac{2L_F(\lambda_{m+1}^\eps)^\alpha}{\Lambda_2} \left(\frac{2L}{(\theta+1)L_F}+\frac{M}{2(\theta+1)}\right)\|p_\eps^1-p_\eps^2\|_{X_\eps^\alpha}^\theta.$$
\bigskip

Putting everything together we obtain

$$\|\mathbf{D}_{\bm\eps}(\Psi_\eps, \Upsilon_\eps)(p_\eps^1)-\mathbf{D}_{\bm\eps}(\Psi_\eps, \Upsilon_\eps)(p_\eps^2)\|_{\mathcal{L}}\leq$$
$$(4L+ML_F)(\lambda_{m+1}^\eps)^\alpha \Big(\frac{1}{\Lambda_1}+\frac{1}{(\theta+1)\Lambda_2}\Big)\|p_\eps^1-p_\eps^2\|_{X_\eps^\alpha}^\theta$$

%
%
%
But since $\Lambda_2\leq \Lambda_1$, see \eqref{def-exponents}, and $\theta>0$, we have
$$\|\mathbf{D}_{\bm\eps}(\Psi_\eps, \Upsilon_\eps)(p_\eps^1)-\mathbf{D}_{\bm\eps}(\Psi_\eps, \Upsilon_\eps)(p_\eps^2)\|_{\mathcal{L}}\leq
(4L+ML_F)(\lambda_{m+1}^\eps)^\alpha\frac{2}{\Lambda_2}\|p_\eps^1-p_\eps^2\|_{X_\eps^\alpha}^\theta$$
$$=\Big( \frac{8L (\lambda_{m+1}^\eps)^{\alpha}}{\Lambda_2}+M\frac{2L_F(\lambda_{m+1}^\eps)^\alpha}{\Lambda_2}\Big)\|p_\eps^1-p_\eps^2\|_{X_\eps^\alpha}^\theta$$

But if we consider
$$\theta^0=\frac{\lambda_{m+1}^0-\lambda_m^0-4L_F(\lambda_m^0)^\alpha-2L_F(\lambda_{m+1}^0)^\alpha}{2L_F(\lambda_m^0)^\alpha+\lambda_m^0},$$
then, direct computations show that if $\theta<\theta_0$ and $\eps$ is small, then  $\frac{2L_F(\lambda_{m+1}^\eps)^\alpha}{\Lambda_2}\leq \eta$ for some $\eta<1$. This implies that if we choose $M$ large enough then 
$$\Big( \frac{8L (\lambda_{m+1}^\eps)^{\alpha}}{\Lambda_2}+M\frac{2L_F(\lambda_{m+1}^\eps)^\alpha}{\Lambda_2}\Big)\leq M$$
which shows the result. 
%
%
\end{proof}

\bigskip

We can prove now the main result of this subsection.

\begin{proof} {\sl (of Proposition \ref{FixedPoint-E^1Theta})} 
Again, we do only the proof for $\Phi_\eps$ being the proof for $\Phi_0^\eps$ completely similar.  

Since $\Phi_\eps=\Psi_\eps\circ j_\eps^{-1}$ and $j_\eps$ is an isomorphism, see Remark \ref{relation-Phi-Psi} and (\ref{definition-jeps}), to prove $\Phi_\eps\in C^{1,\theta}(\mathbb{R}^m, X_\eps^\alpha)$ for some $\theta$, is equivalent to prove $\Psi_\eps\in C^{1,\theta}(\mathbf{P}_{\mathbf{m}}^{\bm\eps} X_\eps^\alpha, X_\eps^\alpha)$.

In \cite{Sell&You}, the authors prove the existence of the unique fixed point $(\Psi_\eps^*, \Upsilon_\eps^*)=(\Psi_\eps, D\Psi_\eps)\in\mathcal{\tilde F}_\eps(L, R)\times \mathcal{E}_\eps$ of the map
$$\mathbf{\Pi}_{\bm\eps}: (\Psi_\eps, \Upsilon_\eps)\rightarrow (\mathbf{T}_{\bm\eps}\Psi_\eps, \mathbf{D}_{\bm\eps}(\Psi_\eps, \Upsilon_\eps)).$$  We want to prove that, in fact, this fixed point belongs to $\mathcal{\tilde F}_\eps(L, R)\times \mathcal{E}_\eps^{\theta,M}$. We proceed as follows. Let $\{z_n\}_{n\geq 0}\in  \mathcal{\tilde F}_\eps(L, R)\times \mathcal{E}_\eps^{\theta,M}$ be a sequence given by
$$z_0= (\Psi_\eps, 0),\qquad z_1=\mathbf{\Pi}_{\bm\eps} z_1=(\mathbf{T}_{\bm\eps}\Psi_\eps, \mathbf{D}_{\bm\eps}(\Psi_\eps, 0)),\quad ... \quad z_n=\mathbf{\Pi}_{\bm\eps}^nz_0.$$
Note that the first coordinate of $z_n$ is $\mathbf{T}_{\bm\eps}^n\Psi_\eps$ which coincides with $\Psi_\eps$ for all $n=1,2,\ldots$ since $\Psi_\eps$ is fixed point of $\mathbf{T}_{\bm\eps}$.   Hence, by Lemma \ref{PsiUniform}, $\{z_n\}_{n\geq 0}\in  \mathcal{\tilde F}_\eps(L, R)\times \mathcal{E}_\eps^{\theta,M}$ with $\theta$ and $M$ described in this lemma.

By Lemma \ref{contracion},
$$\lim_{n\rightarrow\infty} z_n=  (\Psi_\eps, D\Psi_\eps).$$
Hence, since $\mathcal{E}_\eps^{\theta,M}$ is a closed subspace of $\mathcal{E}_\eps$ and $z_n\in \mathcal{E}_\eps^{\theta,M}$ for all $n=1,2,\ldots$, then 
$$ (\Psi_\eps, D\Psi_\eps)\in  \mathcal{\tilde F}_\eps(L, R)\times \mathcal{E}_\eps^{\theta,M}.$$
That is, $\Psi_\eps\in C^{1,\theta}(\mathbf{P}_{\mathbf{m}}^{\bm\eps} X_\eps^\alpha, X_\eps^\alpha)$, for $0<\eps\leq\eps_0$, with $0<\theta\leq\theta_F$ and $\theta<\theta^0$, see (\ref{theta}). Then, $\Phi_\eps\in C^{1,\theta}(\mathbb{R}^m, X_\eps^\alpha)$ as we wanted to prove.
\end{proof}
\par\bigskip\bigskip

%
%

\bigskip

\section{$C^{1, \theta}$-estimates on the inertial manifolds}
\label{convergence}

In this section we study the $C^{1, \theta}$-convergence, with $0<\theta\leq 1$ small enough, of the inertial manifolds $\Phi^\eps_0$, $\Phi_\eps$, $0<\eps\leq\eps_0$. For that we will obtain first the $C^1$-convergence of these manifolds, and, with an interpolation argument and applying the results obtained in the previous subsection, we get the $C^{1, \theta}$-convergence and a rate of this convergence.

\bigskip

Before proving the main result of this subsection, Theorem \ref{convergence-C^1-theo}, we need the following estimate.
\begin{lem}\label{Jdistance}
Let $\Theta^\eps_0(j_0^{-1}(z),t)=\Theta^\eps_0(\Psi_0^\eps, D\Psi_0^\eps, j_0^{-1}(z),t)$ and $\Theta_\varepsilon(j_\varepsilon^{-1}(z),t)=\Theta_\varepsilon(\Psi_\eps, D\Psi_\eps, j_\varepsilon^{-1}(z),t)$ be solutions of (\ref{equationTheta*-section6}) and (\ref{equationTheta-section6}), for $z\in\mathbb{R}^m$ and $t\leq 0$. Then, we have,
$$\|\mathbf{P}_{\mathbf{m}}^{\bm\varepsilon}E\Theta^\eps_0(j_0^{-1}(z), t)-\Theta_\varepsilon(j_\varepsilon^{-1}(z),t)\mathbf{P}_{\mathbf{m}}^{\bm\varepsilon}E\|_{\mathcal{L}(\mathbf{P}_{\mathbf{m}}^{\mathbf{0}}X_0^\alpha, \mathbf{P}_{\mathbf{m}}^{\bm\varepsilon}X_\varepsilon^\alpha)}\leq $$
$$C[\beta(\varepsilon)+[\tau(\varepsilon)|\log(\tau(\varepsilon))|+\rho(\varepsilon)]^\theta]e^{-[(4+(\kappa+2)\theta )L_F(\lambda_m^\varepsilon)^\alpha+(\theta+1)\lambda_m^\varepsilon+3\theta] t}\,\,+$$
$$+\frac{\|ED\Psi_0^\eps-D\Psi_\varepsilon\mathbf{P}_{\mathbf{m}}^{\bm\varepsilon}E\|_\infty}{2}e^{-[4L_F(\lambda_m^\varepsilon)^\alpha+\lambda_m^\varepsilon] t},$$
where $C$ is a constant independent of $\eps$,  $0<\theta\leq\theta_F$ and $\theta<\theta_0$, and $\kappa$ is given by \eqref{cotaextensionproyeccion}. 

\end{lem}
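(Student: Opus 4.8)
The plan is to estimate the difference $\mathbf{P}_{\mathbf{m}}^{\bm\varepsilon}E\Theta^\eps_0(j_0^{-1}(z), t)-\Theta_\varepsilon(j_\varepsilon^{-1}(z),t)\mathbf{P}_{\mathbf{m}}^{\bm\varepsilon}E$ by deriving an integral inequality via the variation of constants formula and then closing it with Gronwall's inequality, exactly in the spirit of Lemmas \ref{distThetaEpsilon} and \ref{PsiUniform}. First I would set $W(t)=\mathbf{P}_{\mathbf{m}}^{\bm\varepsilon}E\Theta^\eps_0(j_0^{-1}(z), t)-\Theta_\varepsilon(j_\varepsilon^{-1}(z),t)\mathbf{P}_{\mathbf{m}}^{\bm\varepsilon}E$ and apply $\mathbf{P}_{\mathbf{m}}^{\bm\varepsilon}E$ to the equation \eqref{equationTheta*-section6} for $\Theta^\eps_0$ and compare it with equation \eqref{equationTheta-section6} for $\Theta_\varepsilon$ acting on $\mathbf{P}_{\mathbf{m}}^{\bm\varepsilon}E$. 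Since both $\Theta$'s start at the identity, the initial condition difference contributes the term $\mathbf{P}_{\mathbf{m}}^{\bm\varepsilon}E-\mathbf{P}_{\mathbf{m}}^{\bm\varepsilon}E=0$ at $t=0$, so the whole estimate will come from the integral terms and from the mismatch between the two linear generators.

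The heart of the argument is to write $W(t)$ through variation of constants for the $\Theta_\eps$-equation and then add and subtract intermediate quantities so as to isolate several sources of error. I would expect to produce a decomposition in which one group of terms measures the difference of the semigroups and projections $e^{A_\varepsilon \mathbf{P}_{\mathbf{m}}^{\bm\varepsilon} s}\mathbf{P}_{\mathbf{m}}^{\bm\varepsilon}E$ versus $\mathbf{P}_{\mathbf{m}}^{\bm\varepsilon}E\, e^{A_0 \mathbf{P}_{\mathbf{m}}^{\mathbf{0}} s}$ (controlled using \textbf{(H1)} and the definition \eqref{definition-tau} of $\tau(\eps)$, together with the norm equivalence \eqref{des-normas}); a second group measuring $DF_\varepsilon(Eu_0^\eps)E-EDF_0^\eps(u_0^\eps)$, which is controlled by $\beta(\eps)$ from \eqref{convergenceDF}; a third group measuring $DF_\varepsilon(u_\varepsilon)-DF_\varepsilon(Eu_0^\eps)$, which by the H\"older bound in \textbf{(H2')} is bounded by $L\|u_\eps-Eu_0^\eps\|^{\theta}_{X_\varepsilon^\alpha}$ and hence, via the $C^0$-distance estimate \eqref{distance-inertialmanifolds} of the manifolds (Theorem \ref{distaciavariedadesinerciales}) together with Lemma \ref{distp-section6}, by $[\tau(\varepsilon)|\log(\tau(\varepsilon))|+\rho(\varepsilon)]^\theta$; a term carrying the factor $\|ED\Psi_0^\eps-D\Psi_\varepsilon\mathbf{P}_{\mathbf{m}}^{\bm\varepsilon}E\|_\infty$ coming from the difference $(I+\Upsilon^\eps_0)$ versus $(I+\Upsilon_\eps)$; and finally the term containing $W(s)$ itself, carrying the factor $2L_F(\lambda_m^\eps)^\alpha$, which is what Gronwall will absorb. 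In each term I would use Lemma \ref{Jnorm} to bound the remaining factor of $\Theta^\eps_0$ or $\Theta_\varepsilon$ by $e^{-[2L_F(\lambda_m^\varepsilon)^\alpha+\lambda_m^\varepsilon]s}$, and \eqref{semigrupoproyectadoP}, \eqref{semigrupoproyectadoP-alpha} to bound the projected semigroup.

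After assembling these pieces one obtains an inequality of the schematic form
\begin{equation*}
\|W(t)\|_{\mathcal{L}}\leq A(z)\,e^{-a t}+B\,e^{-b t}+2L_F(\lambda_m^\eps)^\alpha\int_t^0 e^{-\lambda_m^\eps(t-s)}\|W(s)\|_{\mathcal{L}}\,ds,
\end{equation*}
where $A(z)$ collects the $\beta(\eps)$ and $[\tau|\log\tau|+\rho]^\theta$ contributions (uniformly in $z$), $B$ is the half-coefficient of the $\|ED\Psi_0^\eps-D\Psi_\varepsilon\mathbf{P}_{\mathbf{m}}^{\bm\varepsilon}E\|_\infty$ term, and the exponents $a,b$ are chosen to match those in the statement. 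Applying Gronwall's inequality (as in the previous lemmas) upgrades the exponents by the extra $2L_F(\lambda_m^\eps)^\alpha$ factor, producing $e^{-[4L_F(\lambda_m^\varepsilon)^\alpha+\lambda_m^\varepsilon]t}$ for the $B$-term and the announced $e^{-[(4+(\kappa+2)\theta)L_F(\lambda_m^\varepsilon)^\alpha+(\theta+1)\lambda_m^\varepsilon+3\theta]t}$ for the $A(z)$-term. The main obstacle I anticipate is the careful bookkeeping of the exponents: tracking how the factors $(\lambda_m^\eps)^\alpha$ accumulate across the several error terms (especially where $\kappa$ enters, presumably through the operator norm bounds \eqref{cotaextensionproyeccion} on $E$ combined with the H\"older exponent $\theta$ acting on an $E$-extended argument) and verifying that the worst exponent is exactly the one claimed, so that all terms can be majorized by a single exponential. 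The subtle point is keeping the $\|ED\Psi_0^\eps-D\Psi_\varepsilon\mathbf{P}_{\mathbf{m}}^{\bm\varepsilon}E\|_\infty$ term separate with its own (smaller) exponent rather than folding it into $A(z)$, since this quantity is precisely the one that will be iterated on in the proof of Theorem \ref{convergence-C^1-theo} to close the $C^1$-convergence argument.
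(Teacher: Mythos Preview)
Your overall architecture is right and matches the paper: variation of constants, a telescoping decomposition of the integrand into semigroup mismatch, $\beta(\eps)$, H\"older-on-$DF_\eps$, $(I+\Upsilon)$, and a Gronwall term, followed by Gronwall to pick up the extra $2L_F(\lambda_m^\eps)^\alpha$ in the exponents. Two genuine gaps, however, would prevent you from closing the estimate as written.

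First, your treatment of the $(I+\Upsilon^\eps_0)$ versus $(I+\Upsilon_\eps)$ block is incomplete. You write that this produces only the $\|ED\Psi_0^\eps-D\Psi_\varepsilon\mathbf{P}_{\mathbf{m}}^{\bm\varepsilon}E\|_\infty$ term, but the two operators are evaluated at \emph{different} base points, $p_0^\eps(s)$ and $p_\eps(s)$. The paper splits this into two pieces: one compares $ED\Psi_0^\eps(p_0^\eps)$ with $D\Psi_\eps(\mathbf{P}_{\mathbf{m}}^{\bm\eps}Ep_0^\eps)\mathbf{P}_{\mathbf{m}}^{\bm\eps}E$ (this gives the sup-norm term you identified), and the other compares $D\Psi_\eps(\mathbf{P}_{\mathbf{m}}^{\bm\eps}Ep_0^\eps)$ with $D\Psi_\eps(p_\eps)$. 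For the second piece you need precisely the $C^{0,\theta}$ regularity of $D\Psi_\eps$ established in Proposition~\ref{FixedPoint-E^1Theta} (i.e.\ $D\Psi_\eps\in\mathcal{E}_\eps^{\theta,M}$), together with a bound on $\|\mathbf{P}_{\mathbf{m}}^{\bm\eps}Ep_0^\eps(s)-p_\eps(s)\|$. Without this extra term your decomposition does not telescope.

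Second, for both the $DF_\eps$-H\"older term and the term just described, you invoke Lemma~\ref{distp-section6}. That lemma compares two trajectories of the \emph{same} equation \eqref{equationp}; here you must compare $p_\eps(s)$ (a trajectory of \eqref{equationp}) with $Ep_0^\eps(s)$ (the $E$-image of a trajectory of \eqref{equationp*}), which live in different systems. The paper uses Lemmas~5.4--5.6 of \cite{Arrieta-Santamaria-DCDS} for this cross-comparison, and it is exactly these lemmas that inject the constant $\kappa$ and the additive $+3$ into the exponent $-[(\kappa+2)L_F(\lambda_m^\eps)^\alpha+\lambda_m^\eps+3]s$ governing $\|Eu_0^\eps(s)-u_\eps(s)\|$; raising that to the power $\theta$ then produces the $(\kappa+2)\theta$ and $3\theta$ you see in the final exponent. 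Your bookkeeping intuition about where $\kappa$ enters is therefore on the right track, but the mechanism is the cross-system trajectory lemma, not \eqref{cotaextensionproyeccion} applied directly inside the H\"older argument.
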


\begin{re}
We denote by $\|ED\Psi_0^\eps-D\Psi_\eps\mathbf{P}_{\mathbf{m}}^{\bm\varepsilon} E\|_\infty$ the sup norm, that is 
\begin{equation}\label{supnorm}
\|ED\Psi_0^\eps-D\Psi_\eps \mathbf{P}_{\mathbf{m}}^{\bm\varepsilon}E\|_\infty=\sup_{p\in\mathbf{P}_{\mathbf{m}}^{\mathbf{0}}X_0^\alpha}\|ED\Psi_0^\eps(p)-D\Psi_\eps(\mathbf{P}_{\mathbf{m}}^{\bm\eps}Ep)\mathbf{P}_{\mathbf{m}}^{\bm\eps}E\|_{\mathcal{L}(\mathbf{P}_{\mathbf{m}}^{\mathbf{0}}X_0^\alpha, X_\eps^\alpha)},
\end{equation}
\end{re}

\begin{proof}  With the Variation of Constants Formula applied to \eqref{equationTheta*-section6} and \eqref{equationTheta-section6}, and denoting by $\Theta^\eps_0(t)=\Theta^\eps_0(j_0^{-1}(z),t)$ and  $\Theta_\eps(t)=\Theta_\eps(j_\eps^{-1}(z),t)$, we get

$$E\Theta^\eps_0(t)-\Theta_\varepsilon (t) \mathbf{P}_{\mathbf{m}}^{\bm\varepsilon}E =Ee^{-A_0\mathbf{P}_{\mathbf{m}}^{\mathbf{0}} t}-e^{-A_\varepsilon\mathbf{P}_{\mathbf{m}}^{\bm\varepsilon} t} \mathbf{P}_{\mathbf{m}}^{\bm\varepsilon}E+$$

$$+\int_t^0\left(Ee^{-A_0\mathbf{P}_{\mathbf{m}}^{\mathbf{0}} (t-s)}\mathbf{P}_{\mathbf{m}}^{\mathbf{0}}DF^\eps_0(u^\eps_0(s))(I+D\Psi_0^\eps(p^\eps_0(s)))\Theta^\eps_0(s)-\right. \qquad\qquad\qquad\qquad$$
$$\qquad\qquad\qquad \left. e^{-A_\varepsilon\mathbf{P}_{\mathbf{m}}^{\bm\varepsilon}(t-s)}\mathbf{P}_{\mathbf{m}}^{\bm\varepsilon}DF_\varepsilon(u_\varepsilon(s))(I+D\Psi_\varepsilon(p_\varepsilon(s)))\Theta_\varepsilon(s)\mathbf{P}_{\mathbf{m}}^{\bm\varepsilon} E \right) ds:= I'+\int_t^0I$$

We estimate now $I'$ and $I$. Notice first that $\|I'\|_{\mathcal{L}(\mathbf{P}_{\mathbf{m}}^{\mathbf{0}}X_0^\alpha, \mathbf{P}_{\mathbf{m}}^{\bm\varepsilon}X_\varepsilon^\alpha
)}$ is analyzed with Lemma 5.1, from \cite{Arrieta-Santamaria-DCDS} obtaining,
$$\|I'\|_{\mathcal{L}(\mathbf{P}_{\mathbf{m}}^{\mathbf{0}}X_0^\alpha, \mathbf{P}_{\mathbf{m}}^{\bm\varepsilon}X_\varepsilon^\alpha
)}\leq C_4e^{-(\lambda_m^0+1)t}\tau(\eps)$$

\par\medskip 

Moreover, for $I$ we get, the following decomposition:

$$I=Ee^{-A_0\mathbf{P}_{\mathbf{m}}^{\mathbf{0}} (t-s)}\mathbf{P}_{\mathbf{m}}^{\mathbf{0}}DF^\eps_0(u^\eps_0(s))(I+D\Psi_0^\eps(p^\eps_0(s)))\Theta^\eps_0(s)-$$
$$e^{-A_\varepsilon\mathbf{P}_{\mathbf{m}}^{\bm\varepsilon}(t-s)}\mathbf{P}_{\mathbf{m}}^{\bm\varepsilon}DF_\varepsilon(u_\varepsilon(s))(I+D\Psi_\varepsilon(p_\varepsilon(s)))\Theta_\varepsilon(s)\mathbf{P}_{\mathbf{m}}^{\bm\varepsilon} E=$$
$$=\left( Ee^{-A_0\mathbf{P}_{\mathbf{m}}^{\mathbf{0}} (t-s)}\mathbf{P}_{\mathbf{m}}^{\mathbf{0}}- e^{-A_\varepsilon\mathbf{P}_{\mathbf{m}}^{\bm\varepsilon}(t-s)}\mathbf{P}_{\mathbf{m}}^{\bm\varepsilon}E\right) DF^\eps_0(u^\eps_0(s))(I+D\Psi_0^\eps(p^\eps_0(s)))\Theta^\eps_0(s)$$

$$+e^{-A_\varepsilon\mathbf{P}_{\mathbf{m}}^{\bm\varepsilon}(t-s)}\mathbf{P}_{\mathbf{m}}^{\bm\varepsilon}\Big( EDF^\eps_0(u^\eps_0(s))- DF_\varepsilon(Eu^\eps_0(s))E\Big)(I+D\Psi_0^\eps(p^\eps_0(s)))\Theta^\eps_0(s)$$

$$+e^{-A_\varepsilon\mathbf{P}_{\mathbf{m}}^{\bm\varepsilon}(t-s)}\mathbf{P}_{\mathbf{m}}^{\bm\varepsilon}\Big( DF_\eps(Eu^\eps_0(s))- DF_\varepsilon(u_\eps (s))\Big)E(I+D\Psi_0^\eps(p^\eps_0(s)))\Theta^\eps_0(s)$$

$$+e^{-A_\varepsilon\mathbf{P}_{\mathbf{m}}^{\bm\varepsilon}(t-s)}\mathbf{P}_{\mathbf{m}}^{\bm\varepsilon} DF_\varepsilon(u_\varepsilon(s))
\Big( E(I+ D\Psi_0^\eps(p^\eps_0(s)))-(I+ D\Psi_\eps( \mathbf{P}_{\mathbf{m}}^{\bm\eps} E p^\eps_0(s)))E\Big)\Theta^\eps_0(s)$$

$$+e^{-A_\varepsilon\mathbf{P}_{\mathbf{m}}^{\bm\varepsilon}(t-s)}\mathbf{P}_{\mathbf{m}}^{\bm\varepsilon} DF_\varepsilon(u_\varepsilon(s))\Big(
(I+D\Psi_\eps( \mathbf{P}_{\mathbf{m}}^{\bm\eps} E p^\eps_0(s)))- (I+D\Psi_\varepsilon(p_\varepsilon(s)))\Big)E\Theta^\eps_0(s)$$

$$+e^{-A_\varepsilon\mathbf{P}_{\mathbf{m}}^{\bm\varepsilon}(t-s)}\mathbf{P}_{\mathbf{m}}^{\bm\varepsilon}DF_\varepsilon(u_\varepsilon(s))(I+D\Psi_\eps(p_\eps(s)))
\Big( E\Theta^\eps_0(s)- \Theta_\varepsilon(s)\mathbf{P}_{\mathbf{m}}^{\bm\varepsilon} E\Big)$$

$$=I_1+I_2+I_3+I_4+I_5+I_6.$$
Now we can study the norm $\|I\|_{\mathcal{L}(\mathbf{P}_{\mathbf{m}}^{\mathbf{0}}X_0^\alpha, \mathbf{P}_{\mathbf{m}}^{\bm\varepsilon}X_\varepsilon^\alpha)}$ analyzing the norm of each term separately.

\par\medskip 

By   Lemma \ref{Jnorm}, Lemma 5.1 from \cite{Arrieta-Santamaria-DCDS} and  \eqref{semigrupoproyectadoP-alpha} we have,

$$\|I_1\|_{\mathcal{L}(\mathbf{P}_{\mathbf{m}}^{\mathbf{0}}X_0^\alpha, \mathbf{P}_{\mathbf{m}}^{\bm\varepsilon}X_\varepsilon^\alpha)}\leq 2L_FC_4\tau(\varepsilon)e^{-(\lambda_m^0+1)t}e^{(-2L_F(\lambda_m^0)^\alpha+1) s}.$$
With the definition of $\beta(\eps)$ from \eqref{convergenceDF} and again Lemma \ref{Jnorm} and 
\eqref{semigrupoproyectadoP-alpha} $$\|I_2\|_{\mathcal{L}(\mathbf{P}_{\mathbf{m}}^{\mathbf{0}}X_0^\alpha, \mathbf{P}_{\mathbf{m}}^{\bm\varepsilon}X_\varepsilon^\alpha)}\leq 2(\lambda_m^\varepsilon)^\alpha\beta(\varepsilon)e^{-\lambda_m^\varepsilon t}e^{-2L_F(\lambda_m^\varepsilon)^\alpha s}.$$
To study the term $I_3$, again, from \eqref{convergenceDF}, \eqref{semigrupoproyectadoP-alpha},  Lemma \ref{Jnorm} and the properties on the norm of extension operator, see \eqref{cotaextensionproyeccion}, for $0<\theta\leq \theta_F$,
$$\|I_3\|_{\mathcal{L}(\mathbf{P}_{\mathbf{m}}^{\mathbf{0}}X_0^\alpha, \mathbf{P}_{\mathbf{m}}^{\bm\varepsilon}X_\varepsilon^\alpha)}\leq 2\kappa(\lambda_m^\varepsilon)^\alpha L\|Eu^\eps_0(s)-u_\varepsilon(s)\|_{X_\varepsilon^\alpha}^\theta e^{-\lambda_m^\varepsilon t}e^{-2L_F(\lambda_m^\varepsilon)^\alpha s}.$$
Remember that, 
$$u^\eps_0(s)=p^\eps_0(s)+\Psi^\eps_0(p^\eps_0(s))=p^\eps_0(s)+\Phi^\eps_0(j_0(p^\eps_0(s))),$$
and for $0<\eps\leq\eps_0$,
$$u_\eps(s)=p_\eps(s)+\Psi_\eps(p_\eps(s))=p_\eps(s)+\Phi_\eps(j_\eps(p_\eps(s))).$$
Then,
$$\|Eu^\eps_0(s)-u_\varepsilon(s)\|_{X_\varepsilon^\alpha}\leq$$
$$\|p_\eps(s)-Ep^\eps_0(s)\|_{X_\eps^\alpha}+ \|\Phi_\eps(j_\eps(p_\eps(s)))-\Phi_\eps(j_0(p^\eps_0(s))\|_{X_\eps^\alpha}+\|\Phi_\eps(j_0(p^\eps_0(s))-\Phi_0^\eps(j_0(p^\eps_0(s)))\|_{X_\eps^\alpha}\leq$$
$$\|p_\eps(s)-Ep^\eps_0(s)\|_{X_\eps^\alpha}+ |j_\eps(p_\eps(s))-j_0(p^\eps_0(s))|_{0,\alpha}+\|\Phi_\eps-E\Phi_0^\eps\|_{L^\infty(\mathbb{R}^m, X_\eps^\alpha)}.$$

Applying now Lemma 5.4 from \cite{Arrieta-Santamaria-DCDS}, we get
$$|j_\eps(p_\eps(s))-j_0(p^\eps_0(s))|_{0,\alpha}\leq (\kappa +1)\|p_\eps(s)-Ep^\eps_0(s)\|_{X_\eps^\alpha}+ (\kappa +1)C_P\tau(\eps)\|p_0^\eps\|_{X_0}$$

Applying also Theorem \ref{distaciavariedadesinerciales}, we get
 $$\|\Phi_\eps-E\Phi_0^\eps\|_{L^\infty(\mathbb{R}^m, X_\eps^\alpha)}\leq C[\tau(\varepsilon)|\log(\tau(\varepsilon))|+\rho(\varepsilon)]$$

Hence, 
$$\|Eu^\eps_0(s)-u_\varepsilon(s)\|_{X_\varepsilon^\alpha}\leq$$
$$ (\kappa+2)\|p_\eps(s)-Ep^\eps_0(s)\|_{X_\eps^\alpha}+ (\kappa +1)C_P\tau(\eps)\|p_0^\eps\|_{X_0}+C[\tau(\varepsilon)|\log(\tau(\varepsilon))|+\rho(\varepsilon)].$$

To estimate now $\|p_\eps(s)-Ep^\eps_0(s)\|_{X_\eps^\alpha}$ we follow Lemma 5.6 from \cite{Arrieta-Santamaria-DCDS} and to estimate $\|p_0^\eps\|_{X_0}$ we use Lemma 5.5 from \cite{Arrieta-Santamaria-DCDS} also. 

Putting all these estimates together, we get 

$$\|Eu^\eps_0(s)-u_\varepsilon(s)\|_{X_\varepsilon^\alpha}\leq$$
$$\leq (\kappa+2)\left( \frac{L_F}{(\lambda_m^\eps)^{1-\alpha}}\tau(\varepsilon)|\log(\tau(\varepsilon))|+\rho(\varepsilon)+K_2e^{-2s}\tau(\varepsilon)\right) e^{-[(\kappa+2)L_F(\lambda_m^\varepsilon)^\alpha+\lambda_m^\varepsilon]s}+$$
$$+ (\kappa+1)C_P\tau(\eps)(R+C_F)e^{-\lambda_m^\varepsilon s\theta}+C[\tau(\varepsilon)|\log(\tau(\varepsilon))|+\rho(\varepsilon)]\leq $$
$$\leq C [\tau(\varepsilon)|\log(\tau(\varepsilon))|+\rho(\varepsilon)] e^{-[(\kappa+2)L_F(\lambda_m^\eps)^\alpha+\lambda_m^\eps+3]s},$$
with $C>0$ independent of $\varepsilon$.  Observe that since $s\leq 0$, we have $e^{-[(\kappa+2)L_F(\lambda_m^\eps)^\alpha+\lambda_m^\eps+3]s}\geq 1$.

%
%
\bigskip

Hence,
$$\resizebox{.99\hsize}{!}{$\|I_3\|_{\mathcal{L}(\mathbf{P}_{\mathbf{m}}^{\mathbf{0}}X_0^\alpha, \mathbf{P}_{\mathbf{m}}^{\bm\varepsilon}X_\varepsilon^\alpha)}\leq 2\kappa(\lambda_m^\varepsilon)^\alpha LC [\tau(\varepsilon)|\log(\tau(\varepsilon))|\mathord+\rho(\varepsilon)]^\theta e^{-\lambda_m^\varepsilon t}e^{-[(2\mathord+(\kappa\mathord+2)\theta)L_F(\lambda_m^\varepsilon)^\alpha\mathord+\theta\lambda_m^\eps+3\theta] s}$}.$$
By Lemma \ref{Jnorm}, 
we have,
$$\|I_4\|_{\mathcal{L}(\mathbf{P}_{\mathbf{m}}^{\mathbf{0}}X_0^\alpha, \mathbf{P}_{\mathbf{m}}^{\bm\varepsilon}X_\varepsilon^\alpha)}\leq (\lambda_m^\varepsilon)^\alpha L_F\|ED\Psi_0^\eps-D\Psi_\eps \mathbf{P}_{\mathbf{m}}^{\bm\varepsilon}E\|_\infty e^{-\lambda_m^\varepsilon t}e^{-2L_F(\lambda_m^\varepsilon)^\alpha s}.$$

By Section \ref{smoothness-subsection}, $D\Psi_\eps\in \mathcal{E}_\eps^{\theta,M}$ for $0<\theta\leq\theta_F$ and $\theta<\theta_0$. Applying estimate (\ref{cotaextensionproyeccion}), Lemma \ref{Jnorm} and Lemma 5.6 from \cite{Arrieta-Santamaria-DCDS}, we have,
$$\resizebox{14.3cm}{!}{$\|I_5\|_{\mathcal{L}(\mathbf{P}_{\mathbf{m}}^{\mathbf{0}}X_0^\alpha, \mathbf{P}_{\mathbf{m}}^{\bm\varepsilon}X_\varepsilon^\alpha)}\leq \kappa L_F(\lambda_m^\eps)^\alpha M(\tau(\eps)|\log(\tau(\eps))|\mathord+\rho(\eps))^\theta e^{-\lambda_m^\eps t}e^{-[(2\mathord+(\kappa\mathord+2)\theta)L_F(\lambda_m^\eps)^\alpha\mathord+\theta\lambda_m^\eps\mathord+3\theta] s} $}$$
Finally, the norm of term $I_6$ is estimated by,
$$\|I_6\|_{\mathcal{L}(\mathbf{P}_{\mathbf{m}}^{\mathbf{0}}X_0^\alpha, \mathbf{P}_{\mathbf{m}}^{\bm\varepsilon}X_\varepsilon^\alpha)}\leq 2(\lambda_m^\varepsilon)^\alpha L_F e^{-\lambda_m^\eps (t-s)}\|E\Theta^\eps_0(s)-\Theta_\eps(s)\mathbf{P}_{\mathbf{m}}^{\bm\eps}E \|_{\mathcal{L}(\mathbf{P}_{\mathbf{m}}^{\mathbf{0}}X_0^\alpha, \mathbf{P}_{\mathbf{m}}^{\bm\eps}X_\eps^\alpha)}.$$
Putting all together,
$$\|I\|_{\mathcal{L}(\mathbf{P}_{\mathbf{m}}^{\mathbf{0}}X_0^\alpha, \mathbf{P}_{\mathbf{m}}^{\bm\varepsilon}X_\varepsilon^\alpha)}\leq$$
$$ CL_FL(\lambda_m^\eps)^\alpha\left[\beta(\eps)\mathord+(\tau(\eps)|\log(\tau(\eps))|\mathord+\rho(\eps))^\theta\right]e^{-\lambda_m^\eps t} e^{-[(2\mathord+(\kappa\mathord+2)\theta)L_F(\lambda_m^\eps)^\alpha\mathord+\theta\lambda_m^\eps\mathord+3\theta]s}$$
$$+(\lambda_m^\eps)^\alpha L_F\|ED\Psi_0^\eps-D\Psi_\eps\mathbf{P}_{\mathbf{m}}^{\bm\varepsilon} E\|_\infty e^{-\lambda_m^\varepsilon t}e^{-2L_F(\lambda_m^\varepsilon)^\alpha s}+$$
$$+ 2(\lambda_m^\eps)^\alpha L_F e^{-\lambda_m^\eps(t-s)}\|E\Theta^\eps_0(s)-\Theta_\eps(s)\mathbf{P}_{\mathbf{m}}^{\bm\eps}E \|_{\mathcal{L}(\mathbf{P}_{\mathbf{m}}^{\mathbf{0}}X_0^\alpha, \mathbf{P}_{\mathbf{m}}^{\bm\eps}X_\eps^\alpha)}.$$ 

Then,
$$\|E\Theta^\eps_0(t)-\Theta_\eps(t)\mathbf{P}_{\mathbf{m}}^{\bm\eps}E \|_{\mathcal{L}(\mathbf{P}_{\mathbf{m}}^{\mathbf{0}}X_0^\alpha, \mathbf{P}_{\mathbf{m}}^{\bm\eps}X_\eps^\alpha)}\leq \|I'\|_{\mathcal{L}(\mathbf{P}_{\mathbf{m}}^{\mathbf{0}}X_0^\alpha, \mathbf{P}_{\mathbf{m}}^{\bm\eps}X_\eps^\alpha)}+\int_t^0\|I\|_{\mathcal{L}(\mathbf{P}_{\mathbf{m}}^{\mathbf{0}}X_0^\alpha, \mathbf{P}_{\mathbf{m}}^{\bm\varepsilon}X_\varepsilon^\alpha)}\leq $$
$$\leq C_4 e^{-(\lambda_m^0+1)t}\tau(\eps)+$$
$$CL_FL(\lambda_m^\eps)^\alpha\left[\beta(\eps)\mathord+(\tau(\eps)|\log(\tau(\eps))|\mathord+\rho(\eps))^\theta\right]e^{-\lambda_m^\eps t}\int_t^0  e^{-[(2\mathord+(\kappa\mathord+2)\theta)L_F(\lambda_m^\eps)^\alpha\mathord+\theta\lambda_m^\eps\mathord+3\theta] s}ds$$
$$+(\lambda_m^\eps)^\alpha L_F\|ED\Psi_0^\eps-D\Psi_\eps\mathbf{P}_{\mathbf{m}}^{\bm\varepsilon} E\|_\infty e^{-\lambda_m^\varepsilon t}\int_t^0 e^{-2L_F(\lambda_m^\varepsilon)^\alpha s}ds+$$
$$+ 2(\lambda_m^\eps)^\alpha L_F e^{-\lambda_m^\eps t}\int_t^0 e^{\lambda_m^\eps s}\|E\Theta^\eps_0(s)-\Theta_\eps(s)\mathbf{P}_{\mathbf{m}}^{\bm\eps}E \|_{\mathcal{L}(\mathbf{P}_{\mathbf{m}}^{\mathbf{0}}X_0^\alpha, \mathbf{P}_{\mathbf{m}}^{\bm\eps}X_\eps^\alpha)}ds.$$
So, we have,
$$\|E\Theta^\eps_0(t)-\Theta_\eps(t)\mathbf{P}_{\mathbf{m}}^{\bm\eps}E \|_{\mathcal{L}(\mathbf{P}_{\mathbf{m}}^{\mathbf{0}}X_0^\alpha, \mathbf{P}_{\mathbf{m}}^{\bm\eps}X_\eps^\alpha)}\leq $$
$$\leq  C \left[\beta(\eps)+(\tau(\eps)|\log(\tau(\eps))|+\rho(\eps))^\theta\right]e^{-[(2+(\kappa+2)\theta)L_F(\lambda_m^\eps)^\alpha +(\theta+1)\lambda_m^\eps+3\theta] t}+$$
$$+\frac{\|ED\Psi_0^\eps-D\Psi_\eps\mathbf{P}_{\mathbf{m}}^{\bm\varepsilon} E\|_\infty}{2}e^{-[2L_F(\lambda_m^\eps)^\alpha +\lambda_m^\eps] t}+ $$
$$+ 2(\lambda_m^\eps)^\alpha L_F e^{-\lambda_m^\eps t}\int_t^0 e^{\lambda_m^\eps s}\|E\Theta^\eps_0(s)-\Theta_\eps(s)\mathbf{P}_{\mathbf{m}}^{\bm\eps}E \|_{\mathcal{L}(\mathbf{P}_{\mathbf{m}}^{\mathbf{0}}X_0^\alpha, \mathbf{P}_{\mathbf{m}}^{\bm\eps}X_\eps^\alpha)}ds.$$
Applying Gronwall inequality,
$$\|E\Theta^\eps_0(t)-\Theta_\eps(t)\mathbf{P}_{\mathbf{m}}^{\bm\eps}E \|_{\mathcal{L}(\mathbf{P}_{\mathbf{m}}^{\mathbf{0}}X_0^\alpha, \mathbf{P}_{\mathbf{m}}^{\bm\eps}X_\eps^\alpha)}\leq$$
$$\leq C \left[\beta(\eps)+(\tau(\eps)|\log(\tau(\eps))|+\rho(\eps))^\theta\right]e^{-[(4+(\kappa+2)\theta)L_F(\lambda_m^\eps)^\alpha +(\theta+1)\lambda_m^\eps+3\theta] t}+$$
$$+\frac{\|ED\Psi_0^\eps-D\Psi_\eps \mathbf{P}_{\mathbf{m}}^{\bm\varepsilon}E\|_\infty}{2}e^{-[4L_F(\lambda_m^\eps)^\alpha+\lambda_m^\eps] t},$$
with $C>0$ a constant independent of $\eps$ and $0<\theta\leq\theta_F$ with $\theta<\theta_0$. \end{proof}


\bigskip
We show now the convergence of the differential of inertial manifolds and establish a rate for this convergence. For this, we define $\theta_1$ and $\tilde{\theta}$ as follows, 
\begin{equation}\label{theta1}
\theta_1= \frac{\lambda_{m+1}^0-\lambda_m^0-4L_F(\lambda_m^0)^\alpha}{(\kappa+2)L_F(\lambda_m^0)^\alpha+\lambda_m^0+3},
\end{equation}
and,
\begin{equation}\label{theta*}
\tilde{\theta}=\min\left\{\theta_F,\, \theta_0,\, \theta_1\right\}.
\end{equation}

\begin{prop}\label{differential-convergence}
With $\Phi_0^\eps$ and $\Phi_\eps$ the inertial manifolds,  and if $\theta<\tilde{\theta}$ , 
we have the following estimate 
\begin{equation}\label{C1-convergence}
\|ED\Phi_0^\eps\mathord-D\Phi_\varepsilon\|_{C^1(\mathbb{R}^m, X_\varepsilon^\alpha)}\leq  C\left[\beta(\varepsilon)\mathord+\Big(\tau(\varepsilon)|\log(\tau(\varepsilon))|\mathord+\rho(\varepsilon)\Big)^\theta\right]
\end{equation}
where $C$ is a constant independent of $\eps$. 
\end{prop}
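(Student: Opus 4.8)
The plan is to split the $C^1$ norm into its zeroth- and first-order parts. The zeroth-order part, $\|\Phi_\eps - E\Phi_0^\eps\|_{L^\infty(\mathbb{R}^m, X_\eps^\alpha)}$, is already bounded by Theorem \ref{distaciavariedadesinerciales}; since $\tau(\eps)|\log\tau(\eps)|+\rho(\eps)\to 0$ and $0<\theta<1$, that bound is dominated by $[\tau(\eps)|\log\tau(\eps)|+\rho(\eps)]^\theta$ and hence by the right-hand side of \eqref{C1-convergence}. For the first-order part I would use $\Phi_\eps=\Psi_\eps\circ j_\eps^{-1}$ (Remark \ref{relation-Phi-Psi}) and the chain rule, together with the identity $j_\eps^{-1}=\mathbf{P}_{\mathbf{m}}^{\bm\eps}E\,j_0^{-1}$ coming from the definition $\psi_i^\eps=\mathbf{P}_{\mathbf{m}}^{\bm\eps}(E\varphi_i^0)$ of the coordinate bases, to write $ED\Phi_0^\eps(z)-D\Phi_\eps(z)=\big(ED\Psi_0^\eps(\xi)-D\Psi_\eps(\mathbf{P}_{\mathbf{m}}^{\bm\eps}E\xi)\mathbf{P}_{\mathbf{m}}^{\bm\eps}E\big)j_0^{-1}$ with $\xi=j_0^{-1}(z)$. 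By \eqref{normajepsilon} and the uniform norm equivalence \eqref{des-normas}, the whole matter then reduces to bounding $D(\eps):=\|ED\Psi_0^\eps-D\Psi_\eps\mathbf{P}_{\mathbf{m}}^{\bm\eps}E\|_\infty$, which is exactly the quantity already appearing in Lemma \ref{Jdistance}.

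Next I would invoke the fixed-point identities \eqref{differential-Psi*}, \eqref{differential-Psi}: since $(\Psi_0^\eps,D\Psi_0^\eps)$ and $(\Psi_\eps,D\Psi_\eps)$ are the fixed points of $\mathbf{\Pi}_{\mathbf{0}}^\eps$ and $\mathbf{\Pi}_{\bm\eps}$, the derivatives $D\Psi_0^\eps$ and $D\Psi_\eps$ are Lyapunov--Perron integrals over $(-\infty,0]$ with kernel $e^{A\mathbf{Q}_{\mathbf{m}}s}$. Evaluating the first at $\xi$ and applying $E$, the second at $\mathbf{P}_{\mathbf{m}}^{\bm\eps}E\xi$ and post-composing with $\mathbf{P}_{\mathbf{m}}^{\bm\eps}E$, and subtracting, I would decompose the integrand into six pieces $J_1,\dots,J_6$ exactly parallel to the decomposition $I_1+\dots+I_6$ of Lemma \ref{Jdistance} (now carrying the $\mathbf{Q}$-semigroup instead of the $\mathbf{P}$-semigroup, and with no boundary term since both integrals run from $-\infty$): $J_1$ the semigroup difference $Ee^{A_0\mathbf{Q}_{\mathbf{m}}^{\mathbf{0}}s}\mathbf{Q}_{\mathbf{m}}^{\mathbf{0}}-e^{A_\eps\mathbf{Q}_{\mathbf{m}}^{\bm\eps}s}\mathbf{Q}_{\mathbf{m}}^{\bm\eps}E$, controlled by $\tau(\eps)$; $J_2$ the term $EDF_0^\eps-DF_\eps(E\cdot)E$, controlled by $\beta(\eps)$ from \eqref{convergenceDF}; $J_3$ the term $DF_\eps(Eu_0^\eps)-DF_\eps(u_\eps)$, controlled by {\bf(H2')} and $\|Eu_0^\eps-u_\eps\|_{X_\eps^\alpha}^\theta$, which by Theorem \ref{distaciavariedadesinerciales} and the trajectory estimates of \cite{Arrieta-Santamaria-DCDS} is of order $[\tau|\log\tau|+\rho]^\theta$; $J_4$ the derivative difference, which reproduces $D(\eps)$; $J_5$ controlled by the uniform H\"older bound $D\Psi_\eps\in\mathcal{E}_\eps^{\theta,M}$ from Proposition \ref{FixedPoint-E^1Theta}; and $J_6$ the $\Theta$-difference $E\Theta_0^\eps-\Theta_\eps\mathbf{P}_{\mathbf{m}}^{\bm\eps}E$, bounded by Lemma \ref{Jdistance}.

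I would then integrate each $J_i$ over $(-\infty,0]$ against the bound \eqref{semigrupoproyectado}, $\|e^{A_\eps\mathbf{Q}_{\mathbf{m}}^{\bm\eps}s}\|\lesssim(\lambda_{m+1}^\eps)^\alpha e^{\lambda_{m+1}^\eps s}$ for $s\le 0$, using Lemma \ref{Jnorm} for the factors $\Theta_0^\eps,\Theta_\eps$. Each integral converges precisely when $\lambda_{m+1}^\eps$ dominates the exponential growth of its integrand; the worst exponent, carried by $J_3,J_5$ and the ``good'' part of $J_6$, is $(4+(\kappa+2)\theta)L_F(\lambda_m^\eps)^\alpha+(\theta+1)\lambda_m^\eps+3\theta$, and requiring $\lambda_{m+1}^\eps$ to exceed it is exactly the condition $\theta<\theta_1$ in the limit $\eps\to 0$, see \eqref{theta1}; the restrictions $\theta\le\theta_F$ and $\theta<\theta_0$ enter through $J_3$ and through Proposition \ref{FixedPoint-E^1Theta}. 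Collecting yields an inequality of the form $D(\eps)\le C\big[\beta(\eps)+(\tau|\log\tau|+\rho)^\theta\big]+(\gamma_4+\gamma_6)D(\eps)$, where $\gamma_4$ comes from $J_4$ and $\gamma_6$ from the $\tfrac12\|ED\Psi_0^\eps-D\Psi_\eps\mathbf{P}_{\mathbf{m}}^{\bm\eps}E\|_\infty$-term of Lemma \ref{Jdistance} fed through $J_6$.

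The main obstacle is this self-referential coupling: the bound for $D(\eps)$ feeds back into itself both directly through $J_4$ and indirectly through $J_6$, so the argument closes only if $\gamma_4+\gamma_6<1$ uniformly for small $\eps$. One computes $\gamma_4\approx L_F(\lambda_{m+1}^\eps)^\alpha/(\lambda_{m+1}^\eps-2L_F(\lambda_m^\eps)^\alpha-\lambda_m^\eps)$ and $\gamma_6\approx L_F(\lambda_{m+1}^\eps)^\alpha/(\lambda_{m+1}^\eps-4L_F(\lambda_m^\eps)^\alpha-\lambda_m^\eps)$; the spectral gap hypothesis \eqref{CondicionAutovaloresFuerte0}, whose factor $3(\kappa+2)\ge 9$ provides a wide safety margin, forces each of these below $\tfrac19$, so $\gamma_4+\gamma_6<1$. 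After absorbing $(\gamma_4+\gamma_6)D(\eps)$ to the left, and using the convergence of the eigenvalues together with \eqref{des-normas} to keep all constants independent of $\eps$, I would conclude $D(\eps)\le C[\beta(\eps)+(\tau(\eps)|\log\tau(\eps)|+\rho(\eps))^\theta]$, which combined with the zeroth-order bound proves \eqref{C1-convergence}.
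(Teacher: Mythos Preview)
Your proposal is correct and follows essentially the same route as the paper's proof: the same reduction via $\Phi_\eps=\Psi_\eps\circ j_\eps^{-1}$ and $j_\eps^{-1}=\mathbf{P}_{\mathbf{m}}^{\bm\eps}E\,j_0^{-1}$ to the quantity $D(\eps)=\|ED\Psi_0^\eps-D\Psi_\eps\mathbf{P}_{\mathbf{m}}^{\bm\eps}E\|_\infty$, the same six-term decomposition of the Lyapunov--Perron integrands mirroring Lemma~\ref{Jdistance} (with the $\mathbf{Q}$-semigroup replacing the $\mathbf{P}$-semigroup), and the same self-referential inequality closed by the gap condition. The paper packages the absorption slightly differently, writing the total feedback coefficient directly as $\tfrac12$ (invoking the gap condition and Lemma~3.10 of \cite{Arrieta-Santamaria-DCDS}) rather than your explicit $\gamma_4+\gamma_6<\tfrac29$, but this is only a bookkeeping difference.
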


\begin{proof}  Taking into account the estimate obtained in Theorem \ref{distaciavariedadesinerciales}, it remains to estimate $\|ED\Phi_0^\eps-D\Phi_\eps\|_{L^\infty(\mathbb{R}^m,\, \mathcal{L}(\mathbb{R}^m, X_\eps^\alpha))}$, that is, 
$$\sup_{z\in\mathbb{R}^m}\|ED\Phi_0^\eps(z)\mathord-D\Phi_\varepsilon(z)\|_{\mathcal{L}(\mathbb{R}^m, X_\varepsilon^\alpha)}.$$

But we know that, 
$$\sup_{z\in\mathbb{R}^m}\|ED\Phi_0^\eps(z)-D\Phi_\varepsilon(z)\|_{\mathcal{L}(\mathbb{R}^m, X_\varepsilon^\alpha)}=$$
$$=\sup_{z\in\mathbb{R}^m}\|ED\Psi_0^\eps(j_0^{-1}(z))j_0^{-1}-D\Psi_\varepsilon(j_\varepsilon^{-1}(z))\mathbf{P}_{\mathbf{m}}^{\bm\varepsilon}Ej_0^{-1}\|_{\mathcal{L}(\mathbb{R}^m, X_\varepsilon^\alpha)}=$$
$$=\sup_{z\in\mathbb{R}^m}\|ED\Psi_0^\eps(j_0^{-1}(z))-D\Psi_\varepsilon(\mathbf{P}_{\mathbf{m}}^{\bm\varepsilon}Ej_0^{-1}(z))\mathbf{P}_{\mathbf{m}}^{\bm\varepsilon}E\|_{\mathcal{L}(\mathbf{P}_{\mathbf{m}}^{\mathbf{0}}X_0^\alpha, X_\varepsilon^\alpha)}=$$
$$=\sup_{p^\eps_0\in\mathbf{P}_{\mathbf{m}}^{\mathbf{0}}X_0^\alpha}\|ED\Psi_0^\eps(p^\eps_0)-D\Psi_\varepsilon(\mathbf{P}_{\mathbf{m}}^{\bm\varepsilon}Ep^\eps_0)\mathbf{P}_{\mathbf{m}}^{\bm\varepsilon}E\|_{\mathcal{L}(\mathbf{P}_{\mathbf{m}}^{\mathbf{0}}X_0^\alpha, X_\varepsilon^\alpha)}=\|ED\Psi_0^\eps-D\Psi_\varepsilon E\|_\infty.$$

We have applied $|j_0(p^\eps_0)|_{0,\alpha}=\|p^\eps_0\|_{X_0^\alpha}$ for any $p^\eps_0\in \mathbf{P}_{\mathbf{m}}^{\mathbf{0}}X_0$, see (\ref{normajepsilon}).
\bigskip

Then, for $z'\in\mathbb{R}^m$, with the definition (\ref{differential-Psi}), and denoting again by $\Theta^\eps_0(t)=\Theta^\eps_0(j_0^{-1}(z),t)$ and  $\Theta_\eps(t)=\Theta_\eps(j_\eps^{-1}(z),t)$,  we have
$$ED\Psi_0^\eps(j_0^{-1}(z))j_0^{-1}(z')-D\Psi_\varepsilon(\mathbf{P_m^\eps}E\circ j_0^{-1}(z))\mathbf{P_m^\eps}E\circ j_0^{-1}(z')=$$
$$=\int_{-\infty}^0\left( Ee^{A_0\mathbf{Q}_{\mathbf{m}}^{\mathbf{0}}s}\mathbf{Q}_{\mathbf{m}}^{\mathbf{0}}DF^\eps_0(u^\eps_0(s))(I+D\Psi_0^\eps(p^\eps_0(s)))\Theta^\eps_0(s)j_0^{-1}(z')\right.$$

$$\left. -e^{A_\varepsilon\mathbf{Q}_{\mathbf{m}}^{\bm\varepsilon}s}\mathbf{Q}_{\mathbf{m}}^{\bm\varepsilon}DF_\varepsilon(u_\varepsilon(s))(I+D\Psi_\varepsilon(p_\varepsilon(s)))\Theta_\varepsilon(s)\mathbf{P}_{\mathbf{m}}^{\bm\varepsilon}Ej_0^{-1}(z')ds\right)=\int_{-\infty}^0I$$

But, the integrand $I$ can be decomposed, in a similar way as above in the proof of Lemma \ref{Jdistance}, as

$$I=\left(Ee^{A_0\mathbf{Q}_{\mathbf{m}}^{\mathbf{0}}s}\mathbf{Q}_{\mathbf{m}}^{\mathbf{0}}-e^{A_\varepsilon\mathbf{Q}_{\mathbf{m}}^{\bm\varepsilon} s}\mathbf{Q}_{\mathbf{m}}^{\bm\varepsilon} E\right)DF^\eps_0(u^\eps_0(s))(I+D\Psi_0^\eps(p^\eps_0(s)))\Theta^\eps_0(s)j_0^{-1}(z')+$$

$$+e^{A_\varepsilon\mathbf{Q}_{\mathbf{m}}^{\bm\varepsilon}s}\mathbf{Q}_{\mathbf{m}}^{\bm\varepsilon}\Big(EDF^\eps_0(u^\eps_0)-DF_\varepsilon(Eu^\eps_0(s))E\Big)(I+D\Psi_0^\eps(p^\eps_0(s)))\Theta^\eps_0(s)j_0^{-1}(z')$$

$$+e^{A_\varepsilon\mathbf{Q}_{\mathbf{m}}^{\bm\varepsilon}s}\mathbf{Q}_{\mathbf{m}}^{\bm\varepsilon}\Big(DF_\eps(Eu^\eps_0)-DF_\varepsilon(u_\varepsilon(s))\Big)E(I+D\Psi_0^\eps(p^\eps_0(s)))\Theta^\eps_0(s)j_0^{-1}(z')$$

$$+e^{A_\varepsilon\mathbf{Q}_{\mathbf{m}}^{\bm\varepsilon}s}\mathbf{Q}_{\mathbf{m}}^{\bm\varepsilon}DF_\varepsilon(u_\varepsilon(s))\Big( E(I\mathord+D\Psi_0^\eps(p^\eps_0(s)))\mathord-(I\mathord+D\Psi_\eps(\mathbf{P}_{\mathbf{m}}^{\bm\eps}Ep^\eps_0(s)))E\Big)\Theta^\eps_0(s)j_0^{-1}(z')
$$

$$+e^{A_\varepsilon\mathbf{Q}_{\mathbf{m}}^{\bm\varepsilon}s}\mathbf{Q}_{\mathbf{m}}^{\bm\varepsilon}DF_\varepsilon(u_\varepsilon(s))\Big((I\mathord+D\Psi_\eps(\mathbf{P}_{\mathbf{m}}^{\bm\eps}Ep^\eps_0(s))))\mathord-(I\mathord+D\Psi_\eps(p_\eps(s)))\Big)E\Theta^\eps_0(s)j_0^{-1}(z')
$$

$$+e^{A_\varepsilon\mathbf{Q}_{\mathbf{m}}^{\bm\varepsilon}s}\mathbf{Q}_{\mathbf{m}}^{\bm\varepsilon}DF_\varepsilon(u_\varepsilon(s))(I+D\Psi_\eps(p_\eps(s)))\Big( E\Theta^\eps_0(s)-\Theta_\varepsilon(s)\mathbf{P}_{\mathbf{m}}^{\bm\varepsilon}E\Big)j_0^{-1}(z')$$
$$=I_1+I_2+I_3+I_4+I_5+I_6.$$

Applying Lemma 5.3 from \cite{Arrieta-Santamaria-DCDS} and Lemma \ref{Jnorm},

$$\|I_1\|_{X_\eps^\alpha}\leq 2 C_5 L_F l_\eps^\alpha(-s)e^{[-2L_F(\lambda_m^\eps)^\alpha+\lambda_{m+1}^\eps-\lambda_m^\eps-1]s}|z'|_{0,\alpha}.$$
Following the same steps as in the proof of Lemma \ref{Jdistance}, we obtain,
$$\|I_2\|_{X_\eps^\alpha}\leq 2(\lambda_{m+1}^\eps)^\alpha\beta(\eps)e^{[-2L_F(\lambda_m^\eps)^\alpha+\lambda_{m+1}^\eps-\lambda_m^\eps]s}|z'|_{0,\alpha}, $$
$$\|I_4\|_{X_\eps^\alpha}\leq (\lambda_{m+1}^\eps)^\alpha L_F\|ED\Psi_0^\eps-D\Psi_\eps E\|_\infty e^{[-2L_F(\lambda_m^\eps)^\alpha+\lambda_{m+1}^\eps-\lambda_m^\eps]s}|z'|_{0,\alpha}. $$
For the sake of clarity we will denote by
\begin{equation}\label{def-exponents-2}
\begin{array}{l}
\Lambda_3=-(2+(\kappa+2)\theta)L_F(\lambda_m^\eps)^\alpha+\lambda_{m+1}^\eps-(\theta+1)\lambda_m^\eps-3\theta \\
\Lambda_4=-(4+(\kappa+2)\theta)L_F(\lambda_m^\eps)^\alpha+\lambda_{m+1}^\eps-(\theta+1)\lambda_m^\eps-3\theta.
\end{array}
\end{equation}

Then, we have,
$$\|I_3\|_{X_\eps^\alpha}\leq 2\kappa(\lambda_{m+1}^\eps)^\alpha LC[\tau(\eps)|\log(\tau(\eps))|+\rho(\eps)]^\theta e^{\Lambda_3s}|z'|_{0,\alpha}, $$

$$\|I_5\|_{X_\eps^\alpha}\leq \kappa L_F(\lambda_{m+1}^\eps)^\alpha M C\left(\tau(\eps)|\log(\tau(\eps))|+\rho(\eps)\right)^\theta e^{\Lambda_3 s}|z'|_{0,\alpha}, $$
and for the norm of $I_6$ we apply Lemma \ref{Jdistance},
$$\|I_6\|_{X_\eps^\alpha}\leq\left( 2(\lambda_{m+1}^\eps)^\alpha L_F C \left[\beta(\eps)+(\tau(\eps)|\log(\tau(\eps))|+\rho(\eps))^\theta\right]e^{\Lambda_4 s}+\right.$$
$$\left.(\lambda_{m+1}^\eps)^\alpha L_F\|ED\Psi_0-D\Psi_\eps E\|_\infty e^{[-4L_F(\lambda_m^\eps)^\alpha+\lambda_{m+1}^\eps-\lambda_m^\eps] s}\right)|z'|_{0,\alpha}.$$
Putting everything together, $\|I\|_{X_\eps^\alpha}\leq \|I_1\|_{X_\eps^\alpha}+\|I_2\|_{X_\eps^\alpha}+\|I_3\|_{X_\eps^\alpha}+\|I_4\|_{X_\eps^\alpha}+\|I_5\|_{X_\eps^\alpha}+\|I_6\|_{X_\eps^\alpha}$, so, 
$$\int_{-\infty}^0\|I\|_{X_\eps^\alpha}ds\leq 2 C_5 L_F|z'|_{0,\alpha} \int_{-\infty}^0l_\eps^\alpha(-s)e^{[-2L_F(\lambda_m^\eps)^\alpha+\lambda_{m+1}^\eps-\lambda_m^\eps-1]s}ds+$$
$$+ 2(\lambda_{m+1}^\eps)^\alpha\beta(\eps)|z'|_{0,\alpha}\int_{-\infty}^0 e^{[-2L_F(\lambda_m^\eps)^\alpha+\lambda_{m+1}^\eps-\lambda_m^\eps]s}ds+$$
$$+2\kappa(\lambda_{m+1}^\eps)^\alpha LC[\tau(\eps)|\log(\tau(\eps))|+\rho(\eps)]^\theta|z'|_{0,\alpha}\int_{-\infty}^0 e^{\Lambda_3s}ds+$$
$$+(\lambda_{m+1}^\eps)^\alpha L_F\|ED\Psi_0-D\Psi_\eps E\|_\infty|z'|_{0,\alpha}\int_{-\infty}^0 e^{[-2L_F(\lambda_m^\eps)^\alpha+\lambda_{m+1}^\eps-\lambda_m^\eps]s}ds+$$
$$+\kappa L_F(\lambda_{m+1}^\eps)^\alpha MC\left(\tau(\eps)|\log(\tau(\eps))|+\rho(\eps)\right)^\theta|z'|_{0,\alpha}\int_{-\infty}^0 e^{\Lambda_3 s}ds+$$
$$+2(\lambda_{m+1}^\eps)^\alpha L_F C \left[\beta(\eps)+(\tau(\eps)|\log(\tau(\eps))|+\rho(\eps))^\theta\right]|z'|_{0,\alpha}\int_{-\infty}^0e^{\Lambda_4 s}ds+$$
$$+(\lambda_{m+1}^\eps)^\alpha L_F\|ED\Psi_0-D\Psi_\eps E\|_\infty|z'|_{0,\alpha}\int_{-\infty}^0e^{[-4L_F(\lambda_m^\eps)^\alpha+\lambda_{m+1}^\eps-\lambda_m^\eps]s}ds.$$

By Lemma 3.10 from \cite{Arrieta-Santamaria-DCDS}, the gap conditions described in Proposition \ref{existenciavariedadinercial} and $0<\theta<\tilde{\theta}$, see (\ref{theta*}), for $\eps$ small enough,

$$\leq \left(C[\beta(\varepsilon)+(\tau(\varepsilon)|\log(\tau(\varepsilon))|+\rho(\varepsilon))^\theta]+\frac{1}{2}\|ED\Psi_0^\eps-D\Psi_\eps E\|_\infty\right)|z'|_{0,\alpha}$$

Hence,
$$\|[ED\Psi_0^\eps(j_0^{-1}(z))- D\Psi_\varepsilon(\mathbf{P}_{\mathbf{m}}^{\bm\varepsilon}Ej_0^{-1}(z))\mathbf{P}_{\mathbf{m}}^{\bm\varepsilon}E]j_0^{-1}(z')\|_{X_\varepsilon^\alpha}\leq$$
$$\leq  \left(C\left[\beta(\varepsilon)+(\tau(\varepsilon)|\log(\tau(\varepsilon))|+\rho(\varepsilon))^\theta\right]+\frac{1}{2}\|ED\Psi_0^\eps-D\Psi_\varepsilon E\|_\infty\right)|z'|_{0,\alpha}.$$
Since $\Psi_\varepsilon$ and $\Psi_0^\eps$ have bounded support, we consider the sup norm described in (\ref{supnorm}) for $u_0\in\mathbf{P}_{\mathbf{m}}^{\mathbf{0}}X_0^\alpha$ with $\|u_0\|_{X_0^\alpha}\leq 2\mathcal{R}$, with $\mathcal{R}>0$ an upper bound of the support of all $\Psi_\varepsilon$, $0<\varepsilon\leq\varepsilon_0$, and of $\Psi_0^\eps$.

So,
$$\|ED\Psi_0^\eps-D\Psi_\varepsilon E\|_\infty=\qquad\qquad\qquad
$$
$$=\sup_{p\in\mathbf{P}_{\mathbf{m}}^{\mathbf{0}}X_0^\alpha, \|p\|_{X_0^\alpha}\leq2\mathcal{R}}\|ED\Psi_0^\eps(p)-D\Psi_\varepsilon(\mathbf{P}_{\mathbf{m}}^{\bm\varepsilon}Ep)\mathbf{P}_{\mathbf{m}}^{\bm\varepsilon}E\|_{\mathcal{L}(\mathbf{P}_{\mathbf{m}}^{\mathbf{0}}X_0^\alpha, X_\varepsilon^\alpha)}$$
$$=\sup_{z\in\mathbb{R}^m, |z|_{0,\alpha}\leq 2\mathcal{R}}\|ED\Psi_0^\eps(j_0^{-1}(z))- D\Psi_\varepsilon(\mathbf{P}_{\mathbf{m}}^{\bm\varepsilon}Ej_0^{-1}(z))\mathbf{P}_{\mathbf{m}}^{\bm\varepsilon}E\|_{\mathcal{L}(\mathbf{P}_{\mathbf{m}}^{\mathbf{0}}X_0^\alpha, X_\varepsilon^\alpha)}\leq $$
$$\leq  C\left[\beta(\varepsilon)+(\tau(\varepsilon)|\log(\tau(\varepsilon))|+\rho(\varepsilon))^\theta\right]+\frac{1}{2}\|ED\Psi_0^\eps-D\Psi_\varepsilon E\|_\infty.$$
which implies,
$$\|ED\Psi_0^\eps-D\Psi_\varepsilon E\|_\infty \leq 2C\left[\beta(\varepsilon)+(\tau(\varepsilon)|\log(\tau(\varepsilon))|+\rho(\varepsilon))^\theta\right],$$
with $\theta<\tilde{\theta}$.

Hence, for $\theta<\tilde{\theta}$,
$$\sup_{z\in\mathbb{R}^m}\|ED\Phi_0^\eps(z)-D\Phi_\varepsilon(z)\|_{\mathcal{L}(\mathbb{R}^m, X_\varepsilon^\alpha)}\leq 2 C\left[\beta(\varepsilon)+(\tau(\varepsilon)|\log(\tau(\varepsilon))|+\rho(\varepsilon))^\theta\right].$$
Applying Theorem \ref{distaciavariedadesinerciales}, then
$$\|ED\Phi_0^\eps-D\Phi_\varepsilon\|_{C^1(\mathbb{R}^m, X_\varepsilon^\alpha)}\leq C\left[\beta(\varepsilon)+(\tau(\varepsilon)|\log(\tau(\varepsilon))|+\rho(\varepsilon))^\theta\right].$$

Which concludes the proof of the proposition. 
\end{proof}

\bigskip
With this estimate we can analyze in detail the $C^{1, \theta}$-convergence of inertial manifolds for some $\theta<\tilde{\theta}$, small enough. We introduce now the proof of the main result of this subsection.
\bigskip

\begin{proof} {\sl  (of Theorem \ref{convergence-C^1-theo})}   We want to show the existence of $\theta^*$ such that we can prove the convergence of the inertial manifolds $\Phi_\eps$ to $\Phi^\eps_0$, when $\eps$ tends to zero in the $C^{1, \theta}$ topology for $\theta<\theta^*$ and obtain a rate of this convergence. That is, an estimate of $\|\Phi_\eps- E\Phi_0^\eps\|_{C^{1, \theta}(\mathbb{R}^m, X_\eps^\alpha)}$. Let us choose $\theta^*<\tilde{\theta}$ as close as we want to $\tilde{\theta}$, where $\tilde{\theta}$ is given by \eqref{theta*}, so that Proposition \ref{differential-convergence} holds.
\bigskip

As we have mentioned,
$$\|\Phi_\eps- E\Phi_0^\eps\|_{C^{1, \theta}(\mathbb{R}^m, X_\eps^\alpha)}=\|\Phi_\eps- E\Phi_0^\eps\|_{C^1(\mathbb{R}^m, X_\eps^\alpha)}+$$
$$+ \sup_{z, z'\in\mathbb{R}^m}\frac{\|(D\Phi_\eps- ED\Phi_0^\eps)(z)-(D\Phi_\eps- ED\Phi_0^\eps)(z')\|_{\mathcal{L}(\mathbb{R}^m, X_\eps^\alpha)}}{|z-z'|_{\eps,\alpha}^{\theta}}=$$
$$=I_1+I_2.$$
For $\theta<\theta^*$, $I_2$ can be written as $I_2=I_{21}\cdot I_{22}$, where 
$$I_{21}=\left(\frac{\|(D\Phi_\eps- ED\Phi_0^\eps)(z)-(D\Phi_\eps- ED\Phi_0^\eps)(z')\|_{\mathcal{L}(\mathbb{R}^m, X^\alpha_\eps)}}{|z-z'|_{\eps,\alpha}^{\theta^*}}\right)^{\frac{\theta}{\theta^*}}$$ 
$$I_{22}=\|(D\Phi_\eps- ED\Phi_0^\eps)(z)-(D\Phi_\eps- ED\Phi_0^\eps)(z')\|^{1-\frac{\theta}{\theta^*}}_{\mathcal{L}(\mathbb{R}^m, X_\eps^\alpha)}$$

Note that, since for each $\varepsilon> 0$, $\Phi_\varepsilon=\Psi_\varepsilon\circ j_\varepsilon^{-1}$, and $\Phi^\eps_0=\Psi^\eps_0\circ j_0^{-1}$ then by the chain rule, for all $z, \bar v\in\mathbb{R}^m$,
$$D\Phi_\varepsilon(z)z'=D\Psi_\varepsilon(j_\varepsilon^{-1}(z))( j_\varepsilon^{-1}(z')),$$
$$D\Phi^\eps_0(z)z'=D\Psi^\eps_0(j_0^{-1}(z))( j_0^{-1}(z')).$$
Also, notice that from the definition of $j_\eps$, $j_0$, we have $j_\eps\circ \mathbf{P}_{\mathbf{m}}^{\bm\eps}E=j_0$ or equivalently $j_\eps^{-1}= \mathbf{P}_{\mathbf{m}}^{\bm\eps}E\circ j_0^{-1}$. 

Then, applying \eqref{des-normas} to the denominator,
$$I_{21}\leq
\resizebox{14.5cm}{!}{$\left(\frac{\|(D\Psi_\eps(j_\eps^{-1}(z))\mathord-D\Psi_\eps(j_\eps^{-1}(z')))j_\eps^{-1}\mathord+(ED\Psi_0^\eps(j_0^{-1}(z'))\mathord-ED\Psi_0^\eps(j_0^{-1}(z)))j_0^{-1}\|_{\mathcal{L}(\mathbb{R}^m, X_\eps^\alpha)}}{(1-\delta)^{\theta^*}\|j_0^{-1}(z)-j_0^{-1}(z')\|_{X_0^\alpha}^{\theta^*}}\right)^{\frac{\theta}{\theta^*}}$}$$
Since in the previous subsection we have proved $D\Psi_\eps\in \mathcal{E}_\eps^{\theta,M}$, with $\theta<\theta_0$, in particular we have $D\Psi_\eps\in \mathcal{E}_\eps^{\theta,M}$, with $\theta<\tilde{\theta}$. Without loss of generality we consider $D\Psi_\eps\in \mathcal{E}_\eps^{\theta^*,M}$. Moreover, $\|j_\eps^{-1}\|_{\mathcal{L}(\mathbb{R}^m, \mathbf{P}_{\mathbf{m}}^{\bm\eps} X_\eps^\alpha)}=\|\mathbf{P}_{\mathbf{m}}^{\bm\eps}E\circ j_0^{-1}\|_{\mathcal{L}(\mathbb{R}^m, \mathbf{P}_{\mathbf{m}}^{\bm\eps} X_\eps^\alpha)}\leq\kappa$, see (\ref{normajepsilon}) and (\ref{cotaextensionproyeccion}). Then, we obtain
$$I_{21}\leq \frac{(M\kappa(\kappa+1))^{\frac{\theta}{\theta^*}}\|j_0^{-1}(z)-j_0^{-1}(z')\|_{X_0^\alpha}^{\theta}}{(1-\delta)^\theta\|j_0^{-1}(z)-j_0^{-1}(z')\|_{X_0^\alpha}^{\theta}}=\frac{\left(M\kappa(\kappa+1)\right)^{\frac{\theta}{\theta^*}}}{(1-\delta)^\theta}.$$

Note that,
$$I_{22}\leq \left(2\|D\Phi_\eps-ED\Phi_0^\eps\|_{L^\infty(\mathbb{R}^m,\, \mathcal{L}(\mathbb{R}^m, X_\eps^\alpha))}\right)^{1-\frac{\theta}{\theta^*}}.$$

Hence, for $\theta<\theta^*$,
$$\|\Phi_\eps- E\Phi_0^\eps\|_{C^{1, \theta}(\mathbb{R}^m, X_\eps^\alpha)}\leq $$
$$\leq \|\Phi_\eps- E\Phi_0^\eps\|_{L^\infty(\mathbb{R}^m, X_\eps^\alpha)}+\|D\Phi_\eps-ED\Phi_0^\eps\|_{L^\infty(\mathbb{R}^m,\, \mathcal{L}(\mathbb{R}^m, X_\eps^\alpha))}+$$
$$+\frac{\left(M\kappa(\kappa+1)\right)^{\frac{\theta}{\theta^*}}}{(1-\delta)^\theta} \left(2\|D\Phi_\eps-ED\Phi_0^\eps\|_{L^\infty(\mathbb{R}^m,\, \mathcal{L}(\mathbb{R}^m, X_\eps^\alpha))}\right)^{1-\frac{\theta}{\theta^*}}.$$
By Theorem \ref{distaciavariedadesinerciales} and Proposition \ref{differential-convergence}, we have 

$$\|\Phi_\eps- E\Phi_0^\eps\|_{C^{1, \theta}(\mathbb{R}^m, X_\eps^\alpha)}\leq $$
$$\leq C[\tau(\eps)|\log(\tau(\eps))|+\rho(\eps)]+2 C\left[\beta(\varepsilon)+(\tau(\varepsilon)|\log(\tau(\varepsilon))|+\rho(\varepsilon))^{\theta^*}\right]+$$
$$+\frac{\left(M\kappa(\kappa+1)\right)^{\frac{\theta}{\theta^*}}}{(1-\delta)^\theta} \left(4 C\left[\beta(\varepsilon)+(\tau(\varepsilon)|\log(\tau(\varepsilon))|+\rho(\varepsilon))^{\theta^*}\right]\right)^{1-\frac{\theta}{\theta^*}}\leq$$
$$\leq \mathbf{C} \left(\left[\beta(\varepsilon)+(\tau(\varepsilon)|\log(\tau(\varepsilon))|+\rho(\varepsilon))^{\theta^*}\right]\right)^{1-\frac{\theta}{\theta^*}},$$
which shows the result.

\end{proof}
\par\bigskip\bigskip


\end{document}